\documentclass[11pt]{amsart}
\usepackage[T1]{fontenc}
\usepackage[utf8]{inputenc}
\usepackage{lmodern}
\usepackage{microtype}
\usepackage{xcolor}
\usepackage{amsmath,amssymb,amsthm,mathtools}
\usepackage{enumitem}
\usepackage[margin=1in]{geometry}
\usepackage{hyperref}
\hypersetup{colorlinks=true,linkcolor=blue,citecolor=blue,urlcolor=blue}
\usepackage{ifthen}

\theoremstyle{plain}
\newtheorem{theorem}{Theorem}[section]
\newtheorem{lemma}[theorem]{Lemma}
\newtheorem{proposition}[theorem]{Proposition}

\theoremstyle{definition}
\newtheorem{definition}[theorem]{Definition}
\newtheorem{example}[theorem]{Example}
\theoremstyle{remark}
\newtheorem{remark}[theorem]{Remark}

\newlength{\leftstackrelawd}
\newlength{\leftstackrelbwd}
\def\leftstackrel#1#2{\settowidth{\leftstackrelawd}{${{}^{#1}}$}\settowidth{\leftstackrelbwd}{$#2$}
	\addtolength{\leftstackrelawd}{-\leftstackrelbwd}
	\leavevmode\ifthenelse{\lengthtest{\leftstackrelawd>0pt}}
	{\kern-.5\leftstackrelawd}{}\mathrel{\mathop{#2}\limits^{#1}}}

\DeclareMathOperator{\rank}{rank}
\DeclareMathOperator{\diag}{diag}
\newcommand{\DD}{\mathbb{D}}
\newcommand{\FF}{\mathbb{F}}
\newcommand{\R}{\mathcal{R}}

\newcommand{\GL}{\mathrm{GL}}
\newcommand{\eps}{\varepsilon}
\newcommand{\Cent}{\operatorname{Cent}}
\newcommand{\supp}{\operatorname{supp}}
\numberwithin{equation}{section}

\newcommand{\abs}[1]{\left|#1\right|}

\title[Jordan semi-triple maps on structural matrix rings]{Automatic additivity for injective Jordan semi-triple maps on structural matrix rings over division rings}

\author{Ilja Gogi\'{c}}
\address{I.~Gogi\'{c}, Department of Mathematics, Faculty of Science, University of Zagreb, Bijeni\v{c}ka 30, 10000 Zagreb, Croatia}
\email{ilja@math.hr}

\author{Mateo Toma\v{s}evi\'{c}}
\address{M.~Toma\v{s}evi\'{c}, Department of Mathematics, Faculty of Science, University of Zagreb, Bijeni\v{c}ka 30, 10000 Zagreb, Croatia}
\email{mateo.tomasevic@math.hr}

\subjclass[2020]{16W20, 16S50, 17C50, 47B49}

\keywords{Jordan semi-triple map, Jordan homomorphism, structural matrix ring, division ring, automatic additivity, nonlinear preserver}

\date{\today}

\begin{document}
	
	\begin{abstract}
		Let $\mathbb D$ be a division ring, and let
		$\mathcal{R}\subseteq M_n(\mathbb{D})$ be a structural matrix ring over
		$\mathbb{D}$, that is, the subring of $M_n(\mathbb{D})$ supported on the
		ordered pairs of a preorder on $\{1,\ldots,n\}$. We study injective Jordan
		semi-triple maps $\phi:\mathcal{R}\to M_n(\mathbb{D})$, namely injective maps
		satisfying
		\[
		\phi(XYX)=\phi(X)\phi(Y)\phi(X), \qquad \text{for all } X,Y\in\mathcal{R}.
		\]
		Assuming that the centre of $\mathbb{D}$ has more than two elements, we give a
		criterion for automatic additivity and show that there are exactly two
		obstructions. The first one is scalar: it occurs precisely when
		$\mathcal{R}$ has a direct ring summand isomorphic to $\mathbb{D}$ and
		$\mathbb{D}$ is isomorphic to neither $\mathbb{F}_3$ nor $\mathbb{F}_4$. The second one is order-theoretic: it occurs when a nonsymmetric comparable pair $i\preceq j$, $j\not\preceq i$, admits
		no third index $k\notin\{i,j\}$ comparable with both $i$ and $j$. If neither
		obstruction occurs, all injective Jordan semi-triple maps are additive. The centre-size hypothesis is sharp: for $n\ge3$, the upper-triangular ring $T_n(\mathbb{F}_2)$ has neither obstruction but nevertheless admits nonadditive injective Jordan semi-triple maps. Finally, in the additive case,
		we describe the maps componentwise, in terms of endomorphisms,
		anti-endomorphisms, and transitive multipliers. 
	\end{abstract}
	
	\maketitle
	
	\section{Introduction}
	Let $\mathcal R$ and $\mathcal S$ be rings. A map $\phi:\mathcal R\to\mathcal S$ is called a \emph{Jordan semi-triple map}
	if it satisfies
	\[
	\phi(xyx)=\phi(x)\phi(y)\phi(x),
	\qquad \text{for all } x,y\in\mathcal R.
	\]
	No additivity is assumed; the question is when the identity and
	injectivity already force it. The terminology follows Le\v{s}njak and
	Sze~\cite{Lesnjak-Sze}, who proved automatic additivity and obtained a
	standard form for full matrix algebras over fields. We extend the
	full-matrix setting to structural matrix rings over division rings, where
	additional scalar and order-theoretic obstructions arise. Further related results on Jordan semi-triple maps can be found in \cite{DAmour,Kokol-Mojskerc,Kuzma-JTP,Lu-triple,Slowik,Yang-Lu}.
	
	The semi-triple identity also has a natural Jordan-theoretic origin. Let
	$\mathcal J$ be a Jordan algebra over a field of characteristic different
	from $2$, with product $\circ$. The quadratic operator associated with
	$x\in\mathcal J$ is
	\[
	U_x:\mathcal J\to\mathcal J,\qquad
	U_x(y):=2x\circ(x\circ y)-x^2\circ y;
	\]
	see, for example, \cite{Jacobson-Jordan,McCrimmon-Jordan}. When
	$\mathcal J$ arises from an associative algebra with the symmetrized
	product
	\[
	x\circ y=\frac12(xy+yx),
	\]
	one has $U_x(y)=xyx$. Thus, in the associative case, the semi-triple
	identity expresses the preservation of quadratic operators.
	
	The problem considered here belongs to the broader automatic-additivity
	programme: one starts with a map satisfying a multiplicative, Jordan
	multiplicative, ternary, or similar nonlinear identity, and asks whether
	additivity follows. A classical theorem of Martindale~\cite{Martindale} shows that bijective multiplicative maps from prime rings with nontrivial idempotents onto arbitrary rings are
	automatically additive. In the matrix setting, Jodeit and
	Lam~\cite{Jodeit-Lam} treated nondegenerate multiplicative self-maps over
	principal ideal domains. Similar phenomena for Jordan multiplicative maps and related nonlinear
	preservers have been studied in full matrix algebras, standard operator algebras, and Jordan-algebraic settings; see, for example,
	\cite{GT-matrix-jordan-mult,Ji,Molnar}.
	
	Let $\preceq$ be a preorder (or quasi-order) on
	$[n]:=\{1,\ldots,n\}$. The associated \emph{structural matrix ring},
	abbreviated \emph{SMR}, over a division ring $\DD$ is
	\[
	\R_{\preceq}
	:=
	\{[x_{ij}]\in M_n(\DD):x_{ij}=0
	\text{ whenever } i\not\preceq j\}.
	\]
	Thus $\R_{\preceq}$ is the unital subring of $M_n(\DD)$ whose zero
	pattern is encoded by $\preceq$. This terminology is standard in the
	ring-theoretic literature; see, for instance, \cite{Coelho-CA,Smith-vanWyk-internal,vanWyk-special-radicals,vanWyk-column-sum}. This class contains full matrix rings, triangular and block triangular matrix rings, and finite incidence rings, thereby extending Rota's incidence-algebra framework~\cite{Rota}. Equivalently, as recalled in Remark~\ref{rem:diagonal-characterization}, the SMRs over $\DD$ are precisely the subrings of $M_n(\DD)$ that contain the full diagonal subring. Over fields, these rings are usually called \emph{structural matrix algebras}, or \emph{SMAs}.
	
	SMRs and SMAs have been studied extensively from several points of view.
	Automorphisms of SMRs and SMAs were described by
	Coelho~\cite{Coelho-LAA,Coelho-CA}. Jordan homomorphisms and Jordan
	isomorphisms, including the incidence ring setting, were studied in
	\cite{Akkurt-Akkurt-Barker,Benkovic, BFK-finitary,BFK-pocategory}.
	Further preserver problems for incidence algebras, such as potent
	preservers and primitive-idempotent preservers, were considered in
	\cite{Garces-Khrypchenko-potent,Garces-Khrypchenko-primitive}. More recently, the authors studied SMAs over the complex field, describing Jordan embeddings and related rank preservers~\cite{GT-Jordan}
	and obtaining automatic-additivity criteria and standard forms for
	multiplicative and Jordan multiplicative maps~\cite{GT-mult}. The transitive multipliers appearing in
	these standard forms will reappear below.
	
	For an arbitrary SMR $\R_{\preceq}\subseteq M_n(\DD)$, we determine when every
	injective Jordan semi-triple map $\phi:\R_{\preceq}\to M_n(\DD)$ is additive.
	The main result, Theorem~\ref{thm:main}, gives a necessary and sufficient
	criterion when the centre $Z(\DD)$ has more than two elements. There are
	precisely two possible obstructions.
	
	The first obstruction is scalar. It appears when some index $i$ is
	incomparable with all the others. In that case, $\DD E_{ii}$ is a direct
	ring summand of $\R_{\preceq}$, isomorphic to $\DD$, so the problem reduces
	to the scalar case on this summand. By
	Proposition~\ref{prop:scalar-classification}, automatic additivity in the
	scalar case holds exactly for $\DD\cong\FF_2,\FF_3$ or $\FF_4$. Under the
	standing hypothesis $|Z(\DD)|>2$, this leaves precisely the two scalar
	exceptions $\DD\cong\FF_3$ and $\DD\cong\FF_4$.
	
	The second obstruction is order-theoretic. Writing $E_{ij}$ for the standard matrix
	units, for pairwise distinct indices $i,j,k$ with $i\preceq j\preceq k$
	and $a,b\in\DD$ we have
	\[
	(aE_{ij}+bE_{jk})E_{jj}(aE_{ij}+bE_{jk})=abE_{ik}.
	\]
	This identity links three matrix components, with the coefficient order
	fixed by multiplication in $\DD$. After relabelling the indices, the same
	mechanism applies whenever a third index lies below $i$, between $i$ and
	$j$, or above $j$. We call a nonsymmetric comparable pair
	$i\preceq j$, $j\not\preceq i$, \emph{isolated} if no such third index
	exists.  In that case, one can deform the triangular corner spanned by
	$E_{ii},E_{ij},E_{jj}$ by modifying the $E_{ij}$-coordinate nonadditively,
	while still preserving the Jordan semi-triple identity.
	
	Thus, when $|Z(\DD)|>2$, automatic additivity holds precisely when no
	isolated nonsymmetric comparable pair exists and, whenever a scalar direct
	summand occurs, one has $\DD\cong\FF_3$ or $\FF_4$. The sufficiency
	direction requires only $|\DD|>2$, while the stronger centre-size hypothesis
	is used in the reverse implication. The role and limits of this centre-size
	hypothesis are discussed in Section~\ref{sec:centre-size-hypothesis}.
	
	Independently of the automatic-additivity criterion, we also describe
	additive injective Jordan semi-triple maps on arbitrary SMRs. After
	normalization, such maps have a standard componentwise form: on each
	connected component, the action is determined either by an injective
	endomorphism of $\DD$ or by an injective anti-endomorphism of $\DD$, together
	with a transitive multiplier.
	
	The paper is organized as follows. Section~\ref{sec:prelim-obstructions} fixes notation and recalls the structural facts about SMRs used later. Section~\ref{sec:obstructions-main}
	introduces the two obstructions to automatic additivity and states
	Theorem~\ref{thm:main}. Section~\ref{sec:proof-main} proves the theorem.
	Section~\ref{sec:centre-size-hypothesis} discusses the role of the
	centre-size hypothesis. Finally, Section~\ref{sec:standard-form}
	establishes the standard form for additive injective Jordan semi-triple maps.
	
	\section{Preliminaries}\label{sec:prelim-obstructions}
	
	Throughout, $\DD$ denotes a division ring, $Z(\DD)$ its centre, and
	$\DD^\times:=\DD\setminus\{0\}$. By $M_{m,n}(\DD)$ we denote the set of all
	$m\times n$ matrices over $\DD$, equipped with its usual left $\DD$-vector
	space structure. When $m=n$, we write $M_n(\DD)$ for the corresponding matrix
	ring, with the usual matrix multiplication. We denote by $T_n(\DD)$ the
	subring of $M_n(\DD)$ consisting of all upper-triangular matrices.
	
	We view $\DD^n$ as a right $\DD$-vector space of column vectors, and matrices
	act on it by left multiplication. Thus, for $A\in M_{m,n}(\DD)$, we write
	\[
	\rank A:=\dim_{\DD}^{\,r} A(\DD^n),
	\]
	the right $\DD$-dimension of the image. Whenever rank is mentioned, it is meant
	in this sense. 
	
	When working in $M_n(\DD)$, we write $I$ for the identity matrix, or $I_n$
	when we wish to emphasize its size. Matrix-unit coefficients are written on the left. Hence, for $a,b\in\DD$,
	\begin{equation}\label{eq:matrix-unit-product}
		(aE_{ij})(bE_{k\ell})=
		\begin{cases}
			abE_{i\ell},& \text{ if } j=k,\\
			0, & \text{ if } j\ne k.
		\end{cases}
	\end{equation}
	The order of the coefficients in \eqref{eq:matrix-unit-product} is important
	when $\DD$ is noncommutative. In particular, if
	$D=\diag(d_1,\ldots,d_n)$ with $d_1,\ldots,d_n\in\DD^\times$, then
	\[
	D(cE_{ij})D^{-1}=d_i c d_j^{-1}E_{ij}.
	\]
	Following our previous work, for a matrix
	$X=[x_{ij}]\in M_n(\DD)$, we define its \emph{support} as
	\[
	\supp X :=\{(i,j)\in[n]\times[n]\colon x_{ij}\ne0\}.
	\]
	If $\Omega\subseteq[n]\times[n]$, we also say that $X$ is \emph{supported in}
	$\Omega$ if $\supp X\subseteq\Omega$. For $S\subseteq[n]$ and $X\in M_n(\DD)$, set
	\[
	P_S:=\sum_{i\in S}E_{ii}.
	\]
	Note that $X$ is supported in $S \times S$ precisely when $X=P_SXP_S$.
	
	Given a preorder $\preceq$ on $[n]$, we write
	\[
	i\prec j
	\quad\stackrel{\text{def}}\iff\quad
	i\preceq j\ \text{and}\ i\ne j.
	\]
	Since $\preceq$ need not be antisymmetric, $i\prec j$ does not exclude
	$j\preceq i$. We also write $\asymp$ for the symmetric part of $\preceq$, that is,
	\[
	i\asymp j
	\quad\stackrel{\text{def}}\iff\quad
	i\preceq j\ \text{and}\ j\preceq i.
	\]
	Then $\asymp$ is an equivalence relation on $[n]$.
	
	Following van Wyk \cite{vanWyk-special-radicals}, an SMR obtained from a preorder $\preceq$ on $[n]$ is defined as
	\[
	\R_{\preceq}:=\bigoplus_{i\preceq j}\DD E_{ij}\subseteq M_n(\DD),
	\]
	where the direct sum is taken as a left $\DD$-vector space. As in the field case for structural matrix algebras \cite[Proposition~3.1]{GT-Jordan}, we record the following elementary
	characterization.
	
	\begin{remark}\label{rem:diagonal-characterization}
		A subring $\R\subseteq M_n(\DD)$ is an SMR if and only if it contains the full
		diagonal subring $\bigoplus_i\DD E_{ii}$. Indeed, suppose that
		$\bigoplus_i\DD E_{ii}\subseteq\R$, and define a relation $\preceq_{\R}$ on $[n]$ by
		\[
		i\preceq_{\R}j
		\quad\stackrel{\text{def}}\Longleftrightarrow\quad
		\DD E_{ij}\subseteq\R .
		\]
		Since $\R$ contains all diagonal matrices, the relation $\preceq_{\R}$ is
		clearly reflexive. It is also transitive: if
		$i\preceq_{\R}j\preceq_{\R}k$, then $aE_{ik}=(aE_{ij})E_{jk}\in\R$ for all $a \in \DD$. Thus $\preceq_{\R}$ is a preorder. Let $\mathcal S\subseteq M_n(\DD)$ be the SMR defined by $\preceq_{\R}$. By definition, $\mathcal S\subseteq\R$. Conversely, if $A=[a_{ij}]\in\R$ and $a_{ij}\ne0$, then, for every $a\in\DD$,
		\[
		aE_{ij}=(aa_{ij}^{-1}E_{ii})AE_{jj}\in\R.
		\]
		Thus every nonzero coordinate of $A$ belongs to the support of $\mathcal S$,
		so $A\in\mathcal S$. Hence $\R=\mathcal S$, and therefore $\R$ is an SMR. The
		converse is immediate from the definition.
	\end{remark}
	
	Let $\Gamma_{\preceq}$ denote the \emph{comparability graph} of a preorder $\preceq$ on $[n]$.
	Its vertex set is $[n]$, and two distinct vertices $i$ and $j$ are joined by
	an edge precisely when they are comparable, that is, when $i\preceq j$ or
	$j\preceq i$. The following lemma is the SMR variant of \cite[Remark~3.3]{GT-Jordan}.
	
	\begin{lemma}\label{lem:central-components}
		Let $C_1,\ldots,C_m$ be the connected components of $\Gamma_{\preceq}$. Then
		$P_{C_1},\ldots,P_{C_m}$ are pairwise orthogonal central idempotents of
		$\R_{\preceq}$ satisfying $\sum_{k=1}^m P_{C_k}=I$, and $\R_{\preceq}$ decomposes as the direct sum
		of the corresponding two-sided ideals:
		\begin{equation}\label{eq:PC-direct-sum}
			\R_{\preceq}
			=
			\bigoplus_{k=1}^m P_{C_k}\R_{\preceq}P_{C_k}.
		\end{equation}
		Moreover, every central idempotent of $\R_{\preceq}$ is a sum of some of the
		$P_{C_k}$'s. Consequently, $P_{C_1},\ldots,P_{C_m}$ are precisely the minimal
		nonzero central idempotents of $\R_{\preceq}$, with respect to the usual partial order on idempotents, where $P\le Q$ means $PQ=QP=P$.
	\end{lemma}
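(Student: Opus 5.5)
The plan is to verify each claim in turn, working directly with matrix units. First I show that each $P_{C_k}$ lies in $\R_{\preceq}$ and is central there. It lies in $\R_{\preceq}$ because it is diagonal. For centrality it suffices to test against the spanning elements $aE_{ij}$ with $i\preceq j$ and $a\in\DD$. If $i=j$, or if $i\ne j$ are comparable, then $i$ and $j$ lie in the same component $C$. Hence $P_{C_k}(aE_{ij})=[i\in C_k]\,aE_{ij}=[j\in C_k]\,aE_{ij}=(aE_{ij})P_{C_k}$. Pairwise orthogonality and $\sum_k P_{C_k}=I$ follow because the components partition $[n]$. Since each $P_{C_k}$ is a central idempotent, $X=\sum_k P_{C_k}X=\sum_k P_{C_k}XP_{C_k}$. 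Each summand $P_{C_k}\R_{\preceq}P_{C_k}=P_{C_k}\R_{\preceq}$ is a two-sided ideal, and the sum is direct by orthogonality. This gives \eqref{eq:PC-direct-sum}.

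The main step is showing that every central idempotent $P$ of $\R_{\preceq}$ is a sum of some $P_{C_k}$. I first use commutation with the diagonal units $E_{ii}\in\R_{\preceq}$. From $E_{ii}P=PE_{ii}$ I get $p_{ij}=0$ for $i\ne j$, so $P=\diag(p_1,\dots,p_n)$. Idempotency then gives $p_i^2=p_i$ in the division ring $\DD$, so each $p_i\in\{0,1\}$. Next, for $i\prec j$, commutation with $E_{ij}\in\R_{\preceq}$ gives $p_iE_{ij}=E_{ij}p_j$, hence $p_i=p_j$. Note that the left-coefficient convention causes no trouble here, because $p_i,p_j\in\{0,1\}$ are central. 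It follows that $i\mapsto p_i$ is constant along edges of $\Gamma_{\preceq}$, so it is constant on each component $C_k$. Therefore $P=\sum_{k:\,p|_{C_k}=1}P_{C_k}$.

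Finally I derive minimality. Each $P_{C_k}$ is nonzero. If $0\ne Q\le P_{C_k}$ is a central idempotent, then by the previous step $Q=\sum_{k\in S}P_{C_k}$ for some set $S$. The condition $QP_{C_k}=Q$ forces $S\subseteq\{k\}$, and $Q\ne 0$ forces $S=\{k\}$, so $Q=P_{C_k}$. Conversely, any minimal nonzero central idempotent is a nonempty sum of some $P_{C_k}$. It dominates each summand, so by minimality it is a single $P_{C_k}$.

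I expect no real obstacle. The only point needing care is the noncommutativity of $\DD$ in the coefficient identities, and this is harmless because the diagonal entries $p_i$ turn out to lie in $\{0,1\}$.
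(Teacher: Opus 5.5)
Your proposal is correct and follows essentially the same route as the paper. You check centrality of $P_{C_k}$ on the spanning elements $aE_{ij}$, show that a central idempotent is diagonal with entries in $\{0,1\}$ by commuting with the $E_{ii}$, and propagate $p_i=p_j$ along the edges of $\Gamma_{\preceq}$. Your explicit two-way minimality argument just fills in the step the paper states as immediate.
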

	
	\begin{proof}
		Let $C$ be a connected component of $\Gamma_{\preceq}$. If
		$i \preceq j$, then either both $i$ and $j$ lie in $C$, or neither
		does. Hence,
		for every $a\in\DD$ and every $E_{ij}\in\R_{\preceq}$, we have
		\[
		P_C(aE_{ij})=
		\begin{cases}
			aE_{ij},& i,j\in C,\\
			0,& i,j\notin C
		\end{cases}
		=
		(aE_{ij})P_C.
		\]
		Therefore $P_C$ commutes with every element of $\R_{\preceq}$, and so
		$P_C$ is central. Since the connected components
		partition $[n]$, the idempotents $P_{C_1},\ldots,P_{C_m}$ are pairwise
		orthogonal and sum to $I$. This gives the decomposition \eqref{eq:PC-direct-sum}.
		
		Now let $Q$ be a central idempotent of $\R_{\preceq}$. Since
		$Q$ commutes with every $E_{ii}$, we have $E_{ii}QE_{jj}=0$ for $i\ne j$.
		Thus $Q=\sum_{i=1}^n d_iE_{ii}$ for some $d_i \in \DD$. The equality $Q^2=Q$ gives $d_i^2=d_i$,
		whence $d_i=0$ or $d_i=1$. If $i\preceq j$,
		then centrality with respect to $E_{ij}$ gives $d_iE_{ij}=d_jE_{ij}$, and
		hence $d_i=d_j$. Thus $d_i=d_j$ whenever vertices $i$ and $j$ are adjacent in
		$\Gamma_{\preceq}$. Since any two vertices in the same connected component of
		$\Gamma_{\preceq}$ are joined by a path of adjacent vertices, these equalities
		propagate along the path. Therefore the coefficients $d_i$ are constant on
		connected components. It follows that $Q$ is a sum of
		the corresponding $P_{C_k}$'s. Hence, the $\le$-minimal nonzero central idempotents of $\R_{\preceq}$ are
		exactly $P_{C_1},\ldots,P_{C_m}$.
	\end{proof}
	
	\section{Jordan semi-triple maps: obstructions and main result}\label{sec:obstructions-main}
	
	We begin this section by recalling the notion of a Jordan semi-triple map.
	We then isolate the two phenomena responsible for failure of automatic
	additivity: nonadditive scalar Jordan semi-triple maps and isolated
	nonsymmetric comparable pairs in the underlying preorder. Theorem~\ref{thm:main},
	the main result of the paper, shows that these are the only possibilities.
	
	\begin{definition}\label{def:jst}
		Let $\mathcal{R}$ and $\mathcal{S}$ be rings. A map $\phi:\mathcal{R}\to\mathcal{S}$ is a \emph{Jordan semi-triple map} if it satisfies
		\[
		\phi(xyx)=\phi(x)\phi(y)\phi(x), \qquad \text{for all } x,y\in\mathcal{R}.
		\]
		When $\mathcal{R}=\mathcal{S}=\DD$, we call $\phi$ a \emph{scalar Jordan semi-triple map}.
	\end{definition}
	
	We now focus on obstructions to automatic additivity for injective Jordan
	semi-triple maps from SMRs to their ambient matrix rings. The first obstruction
	comes from singleton connected components of $\Gamma_{\preceq}$, which give
	scalar direct summands.
	
	\begin{proposition}\label{prop:scalar-classification}
		For a division ring $\DD$, the following conditions are equivalent.
		\begin{enumerate}[label=\textup{(\roman*)}]
			\item Every injective scalar Jordan semi-triple map $f:\DD\to\DD$ is additive.
			\item Every injective scalar Jordan semi-triple map has the form $f(x)=\eps\theta(x)$, where $\eps\in\{1,-1\}$ and $\theta$ is an injective endomorphism or an injective anti-endomorphism of $\DD$.
			\item $\DD$ is isomorphic to one of $\FF_2,\FF_3,\FF_4$.
		\end{enumerate}
	\end{proposition}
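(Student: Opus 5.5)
The plan is to prove (ii)$\Rightarrow$(i)$\Rightarrow$(iii)$\Rightarrow$(ii). The first implication is immediate, since $\eps\theta$ is additive whenever $\theta$ is an endomorphism or an anti-endomorphism.

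\textbf{(i)$\Rightarrow$(iii).} I will prove the contrapositive with a single universal example, the inversion map
\[
f(0)=0,\qquad f(x)=x^{-1}\quad (x\ne0).
\]
This map is injective. It is a Jordan semi-triple map because $(xyx)^{-1}=x^{-1}y^{-1}x^{-1}$ when $x,y\ne0$, and both sides vanish when $x=0$ or $y=0$. It remains to show that $f$ is additive only when $\DD\cong\FF_2,\FF_3,\FF_4$. Suppose $f$ is additive and take $x\notin\{0,-1\}$. Then $(1+x)^{-1}=1+x^{-1}$, so multiplying gives $1=(1+x)(1+x^{-1})=2+x+x^{-1}$. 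Multiplying by $x$ yields $x^2+x+1=0$ for every $x\notin\{0,-1\}$, with everything here lying in the commutative subfield generated by $x$.
\begin{itemize}[label=--]
\item Taking $x=1$ (allowed unless $\operatorname{char}\DD=2$) gives $3=0$.
\item If $\operatorname{char}\DD=3$, the relation becomes $(x-1)^2=0$. Hence every $x\notin\{0,-1\}$ equals $1$, so $\DD=\FF_3$.
\item If $\operatorname{char}\DD=2$, every element outside $\{0,1\}$ is a root of $t^2+t+1$, which has at most two roots in the field $Z(\DD)(x)$. One checks that this forces $|\DD|\le4$; if some $x$ had a non-central conjugate, that conjugate would give a third root in a suitable subfield, or a direct count applies. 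Hence $\DD\cong\FF_2$ or $\FF_4$.
\end{itemize}
So for every other $\DD$ the map $f$ is a nonadditive injective scalar Jordan semi-triple map, and (i) fails.

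\textbf{(iii)$\Rightarrow$(ii).} Let $f$ be an injective scalar Jordan semi-triple map.
\begin{enumerate}
\item From $f(0)=f(0)f(y)f(0)$ for all $y$: if $f(0)\ne0$, then $f(y)=f(0)^{-2}$ is constant, which contradicts injectivity. So $f(0)=0$, and $f$ permutes $\DD^\times$ because $\DD$ is finite.
\item Put $\eps:=f(1)$. Then $\eps=\eps^3$ and $\eps\ne0$, so $\eps^2=1$, and in these fields $\eps\in\{1,-1\}$.
\item Define $g:=\eps f$. It is again an injective Jordan semi-triple map, since $\eps$ is central, and it satisfies $g(0)=0$ and $g(1)=1$.
\item Over $\FF_2$ and $\FF_3$, the bijection $g$ fixes $0$ and $1$, so it is the identity.
\item Over $\FF_4$, $g$ fixes $0$ and $1$ and permutes $\{\omega,\omega^2\}$. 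Hence $g$ is either the identity or the Frobenius map $x\mapsto x^2$, and both are automorphisms.
\end{enumerate}
Thus $f=\eps\theta$ as required.

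The only delicate point is the characteristic-$2$ counting in (i)$\Rightarrow$(iii), where noncommutativity of $\DD$ must be excluded. I expect the cleanest route to be noting that a noncommutative $\DD$ contains infinitely many elements, or at least more than two elements outside $\{0,1\}$, that are not roots of a single quadratic over $\FF_2$. For example, take $x$ and $x+c$ with $c$ central and $c\ne0,1$, using $|Z(\DD)|>2$ when available. Otherwise argue within a maximal subfield of size greater than $4$.
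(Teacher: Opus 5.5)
Your overall route is the paper's. The inversion map, the relation $x^2+x+1=0$ for $x\notin\{0,-1\}$, and the case check for (iii)$\Rightarrow$(ii) all match. Your characteristic-$3$ argument via $(x-1)^2=0$ is correct and needs no commutativity theorem.

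The gap is the characteristic-$2$ case of (i)$\Rightarrow$(iii). This is exactly where noncommutativity must be ruled out, and the proposal never rules it out. Knowing that $t^2+t+1$ has at most two roots in each commutative subfield $Z(\DD)(x)$ only gives $|\FF_2(x)|\le 4$ for each single $x$. It does not bound $|\DD|$: in a noncommutative division ring, one polynomial with central coefficients can have infinitely many roots (think of $t^2+1$ in the real quaternions). None of your suggested repairs closes this:
\begin{enumerate}[label=\textup{(\alph*)}]
\item A conjugate $yxy^{-1}$ need not commute with $x$, so there is no common subfield in which to count roots. In any case it could simply be the other root $x+1=x^2$.
\item The hypothesis $|Z(\DD)|>2$ is not available. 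The proposition concerns arbitrary $\DD$, and in characteristic $2$ the centre can be $\FF_2$, as for the free division ring over $\FF_2$.
\item Wedderburn's theorem does make a noncommutative $\DD$ infinite. But under your hypothesis every subfield of $\DD$ has at most four elements, so appealing to a ``maximal subfield of size greater than $4$'' assumes what has to be proved.
\end{enumerate}

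The paper closes this step by noting that every element then satisfies $x^7=x$ (in characteristic $2$ even $x^4=x$). Hence $\DD$ is commutative by Jacobson's commutativity theorem, and counting roots of $t^2+t+1$ in a field gives $|\DD|\le 4$.

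An elementary fix also works. Assume $z^2=z+1$ for all $z\notin\{0,1\}$, take $x\notin\{0,1\}$, and suppose some $y\notin\{0,1,x,x+1\}$ exists.
\begin{enumerate}[label=\textup{(\alph*)}]
\item Applying the relation to $z=x+y$ gives $xy+yx=1$.
\item Since $x^{-1}=x+1$, we have $xy\ne 1$. Applying the relation to $z=xy$, and using $yx=xy+1$, $x^2=x+1$ and $y^2=y+1$, gives $x+y+1=(xy)^2=xy+1$, so $xy=x+y$.
\item Hence $x(y+1)=y$, so $x=y(y+1)^{-1}=y^2=y+1$. This contradicts the choice of $y$, so $\DD=\{0,1,x,x+1\}\cong\FF_4$.
\end{enumerate}

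A minor slip elsewhere: in (iii)$\Rightarrow$(ii), $e=ef(y)e$ gives $f(y)=e^{-1}$, not $e^{-2}$. The conclusion that $f$ would be constant is unaffected.
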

	
	\begin{proof}
		The implication (ii) $\implies$ (i) follows because endomorphisms and anti-endomorphisms are additive by definition, and multiplication by the fixed sign $\eps=\pm1$ preserves additivity.
		
		Now we prove (i) $\implies$ (iii). Define a map
		\[
		\eta : \DD \to \DD, \qquad \eta(a):=\begin{cases} a^{-1},  &\text{ if } a \neq 0,\\
			0,  &\text{ if } a=0.
		\end{cases}
		\]
		It is easy to check that $\eta$ is an injective scalar Jordan semi-triple map. By assumption, $\eta$ is additive. For $a\notin\{0,-1\}$,
		\[
		(a+1)^{-1}=\eta(a+1)=\eta(a)+\eta(1)=a^{-1}+1.
		\]
		Multiplying on the left by $a$ and on the right by $a+1$ gives
		\[
		a=a(a^{-1}+1)(a+1)=(1+a)(a+1)=1+2a+a^2,
		\]
		so $a^2+a+1=0$. Thus $a^3=1$ for all $a\notin\{0,-1\}$. Consequently every $x\in\DD$ satisfies $x^7=x$. By Jacobson's commutativity theorem, a division ring in which every element is periodic is commutative; see \cite[Theorem~11]{Jacobson-potent} or the modern exposition \cite[Theorem~4.81]{Bresar-NCA}. We may therefore argue
		inside a field. Since $t^2+t+1$ has at most two roots, every element of $\DD$ belongs to the union of $\{0,-1\}$ and the set of roots of $t^2+t+1$. Hence $|\DD|\le4$, and $\DD\cong\FF_2,\FF_3$, or $\FF_4$.
		
		It remains to show (iii) $\implies$ (ii). Let $\DD$ be one of these fields, and let $f:\DD\to\DD$ be an injective scalar Jordan semi-triple map. First $f(0)=0$: otherwise $e:=f(0)\ne0$ would satisfy $e=e f(y)e$ for all $y$; multiplying by $e^{-1}$ on the left and on the right gives $f(y)=e^{-1}$ for all $y$, contradicting injectivity. Over $\FF_2$, injectivity gives $f=\operatorname{id}$. Over $\FF_3$, since $f(1)\ne0$, we have $f(1)=\pm1$, and injectivity gives $f=\pm\operatorname{id}$. Over $\FF_4$, the relation $f(1)^3=f(1)$ gives $f(1)^2=1$, so $f(1)=1$; hence $f$ fixes $0$ and $1$. Let $\omega\in\FF_4\setminus\FF_2$. Since $\FF_4^\times=\{1,\omega,\omega^2\}$ is cyclic of order three and $\omega^2+\omega+1=0$, the remaining two elements are either fixed or interchanged. These two possibilities are respectively the identity and the Frobenius automorphism $x\mapsto x^2$. Both are field automorphisms and hence satisfy the scalar Jordan semi-triple identity. These are precisely the maps in (ii).
	\end{proof}
	
	\begin{remark}\label{rem:additive-scalar-JST}
		Once additivity is imposed, scalar Jordan semi-triple maps reduce to the
		expected multiplicative or anti-multiplicative form. Let $f:\DD\to\DD$ be a nonzero additive scalar Jordan semi-triple map. Then $f$ is
		injective. Indeed, if $f(a)=0$ for some $a\ne0$, then for every $x\in\DD$,
		\[
		f(x) = f\left(a(a^{-1}xa^{-1})a\right)  =  f(a)f(a^{-1}xa^{-1})f(a)  =  0,
		\]
		contrary to $f\ne0$. Thus $\ker f=\{0\}$, and additivity gives injectivity. Set $\eps:=f(1)$. Then $\eps^3=\eps$. Since $f$ is injective and $f(0)=0$, we have $\eps\ne0$. Hence $\eps^2=1$, and therefore $\eps=\pm1$. Set
		\[
		\theta:\DD\to\DD,\qquad \theta:=\eps f .
		\]
		The map $\theta$ is again an injective additive scalar Jordan
		semi-triple map with $\theta(1)=1$. In particular,
		\[
		\theta(x^2)=\theta(x)^2,\qquad \text{for all } x\in\DD.
		\]
		Thus $\theta$ is a unital Jordan endomorphism of $\DD$. Since the codomain is a division ring and hence has no zero divisors, the Jacobson--Rickart theorem \cite[Theorem~2]{JR} applies. Hence $\theta$ is either an injective
		endomorphism or an injective anti-endomorphism of $\DD$. Therefore
		\[
		f=\eps\theta,\qquad \eps\in\{1,-1\}.
		\]
	\end{remark}
	
	The second obstruction is order-theoretic, and we formalize it in the following definition.
	
	\begin{definition}\label{def:nonisolated-pair}
		Let $\preceq$ be a preorder on $[n]$. Let $(i,j)$ be a nonsymmetric comparable pair, i.e.\ $i\preceq j$ and $j\not\preceq i$. We say that $(i,j)$ is \emph{nonisolated} if there exists $k\in[n]\setminus\{i,j\}$ such that at least one of
		\[
		k\preceq i\preceq j,
		\qquad
		i\preceq k\preceq j,
		\qquad
		i\preceq j\preceq k
		\]
		holds. We refer to these cases as \emph{lower-endpoint}, \emph{middle}, and \emph{upper-endpoint nonisolation}, respectively. A nonsymmetric comparable pair which fails to be nonisolated is called \emph{isolated}.
	\end{definition}
	
	\begin{example}\label{ex:T_n-SMR}
		The upper-triangular ring $T_n(\DD)$ is the SMR attached to the usual total order
		$1\leq 2\leq \cdots\leq n$. If $n\ge3$, then every (nonsymmetric)
		comparable pair in $T_n(\DD)$ is nonisolated. Indeed, let $i<j$. If there is an
		index $k$ with $i<k<j$, then $k$ gives middle nonisolation. If no such $k$
		exists, then $j=i+1$; since $n\ge3$, either $i>1$ or $j<n$, and a neighbouring
		index gives lower-endpoint or upper-endpoint nonisolation. In contrast, in
		$T_2(\DD)$ the only nonsymmetric comparable pair is $(1,2)$, and it is
		isolated.
		
		The same observation applies to every intermediate subring
		$T_n(\DD)\subseteq\R\subseteq M_n(\DD)$. By
		Remark~\ref{rem:diagonal-characterization}, such an $\R$ is the SMR attached
		to the preorder $\preceq_{\R}$ defined by
		\[
		i\preceq_{\R}j
		\quad\Longleftrightarrow\quad
		\DD E_{ij}\subseteq\R .
		\]
		Since $T_n(\DD)\subseteq\R$, the preorder $\preceq_{\R}$ contains the usual
		total order $\leq$. Hence the equivalence classes of its symmetric part $\asymp_{\R}$ are
		consecutive intervals: if $i<k<j$ and $i\asymp_{\R}j$, then
		$i\preceq_{\R}k\preceq_{\R}j\preceq_{\R}i$, and so $k\asymp_{\R}i$. Thus
		there is a partition
		\[
		[n]=B_1\sqcup\cdots\sqcup B_m
		\]
		into nonempty consecutive blocks such that
		\[
		i\preceq_{\R}j
		\quad\Longleftrightarrow\quad
		i\in B_p,\ j\in B_q,\ \text{and }p\le q .
		\]
		Thus $\R$ is a block upper-triangular ring with full diagonal blocks
		$B_1,\ldots,B_m$. Conversely, every such block upper-triangular ring clearly
		lies between $T_n(\DD)$ and $M_n(\DD)$. 
		
		Assume now that $i\prec_{\R}j$ and $j\not\preceq_{\R}i$. Then
		$i<j$ in the usual order. If $n\ge3$, the argument from the first paragraph
		gives a third index below $i$, between $i$ and $j$, or above $j$, and hence
		the pair is nonisolated. As already noted, the only exceptional case is
		$T_2(\DD)$.
		
		For related work on block upper-triangular rings and algebras and on associated preserver problems, see, for example, \cite{GPT-block-upper-triangular} and the references therein.
	\end{example}
	
	We have the following straightforward lemma. 
	\begin{lemma}\label{lem:isolated-equiv}
		Let $\preceq$ be a preorder on $[n]$. Assume that $i\preceq j$ and $j\not\preceq i$. Then $(i,j)$ is isolated if and only if, for every $h\in[n]$,
		\begin{equation}\label{eq:isolated-equiv}
			h\preceq i\implies h=i,
			\qquad
			i\preceq h\preceq j\implies h\in\{i,j\},
			\qquad
			j\preceq h\implies h=j.
		\end{equation}
		Consequently, if $(i,j)$ is isolated, then no vertex outside $\{i,j\}$ is equivalent to either endpoint.
	\end{lemma}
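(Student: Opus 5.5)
The plan is to unwind Definition~\ref{def:nonisolated-pair} directly. By definition, $(i,j)$ is isolated exactly when there is no $k\in[n]\setminus\{i,j\}$ satisfying $k\preceq i$, or $i\preceq k\preceq j$, or $j\preceq k$. Here the standing relation $i\preceq j$ is automatic in the first and third patterns, and harmless in the middle one. So the negation says, for every $h\notin\{i,j\}$, that $h\not\preceq i$, that $\neg(i\preceq h\preceq j)$, and that $j\not\preceq h$. The task is to match this with the three implications in \eqref{eq:isolated-equiv}.

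The middle implication matches verbatim. For the first implication, the definition only excludes $h\notin\{i,j\}$ with $h\preceq i$, whereas \eqref{eq:isolated-equiv} demands $h=i$. The extra case is $h=j$, and it is excluded by the hypothesis $j\not\preceq i$. Symmetrically, the third implication additionally excludes $h=i$ with $j\preceq i$, which again is ruled out by $j\not\preceq i$. So the two formulations are equivalent in both directions.

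The consequence then follows quickly. Suppose $h\notin\{i,j\}$ were equivalent to $i$. Then $h\preceq i$, which forces $h=i$ by the first implication, a contradiction. If instead $h\asymp j$, then $j\preceq h$, which forces $h=j$ by the third implication, again a contradiction.

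There is no real obstacle here. The only point needing care is the boundary case $h\in\{i,j\}$ in the endpoint implications, and the assumption $j\not\preceq i$ handles it.
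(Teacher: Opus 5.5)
Your proof is correct and takes essentially the same route as the paper. Both unwind Definition~\ref{def:nonisolated-pair} clause by clause, use $j\not\preceq i$ to handle the boundary cases $h=j$ in the first implication and $h=i$ in the third, and obtain the final assertion from the first and third implications (the paper argues the contrapositive, that each failed implication yields the corresponding nonisolation witness and conversely, which is your matching read in the other direction).
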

	
	\begin{proof}
		If the first implication in \eqref{eq:isolated-equiv} fails, then there is $h\ne i$ with $h\preceq i$. Since $j\not\preceq i$, this $h$ cannot be $j$, and $h\preceq i\preceq j$ gives lower-endpoint nonisolation. If the second implication fails, some $h\notin\{i,j\}$ satisfies $i\preceq h\preceq j$, giving middle nonisolation. If the third implication fails, then there is $h\ne j$ with $j\preceq h$. Since $j\not\preceq i$, this $h$ cannot be $i$, and $i\preceq j\preceq h$ gives upper-endpoint nonisolation. Conversely, any nonisolated configuration is precisely a failure of the corresponding implication. The final assertion follows from the first and third implications.
	\end{proof}
	
	\begin{proposition}\label{prop:bad-interval}
		Let $\R_{\preceq} \subseteq M_n(\DD)$ be an SMR with $|Z(\DD)|>2$. Assume that $i \preceq j$ is a nonsymmetric isolated pair. Then there exists a nonadditive injective Jordan semi-triple map $\Theta: \R_{\preceq} \to \R_{\preceq}$.
	\end{proposition}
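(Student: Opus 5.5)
The plan is to build $\Theta$ by hand. It agrees with the identity on every coordinate except $(i,j)$. On that coordinate it rescales by a central scalar $\lambda\in Z(\DD)\setminus\{0,1\}$, which exists because $|Z(\DD)|>2$, but only for matrices whose $(i,i)$- and $(j,j)$-entries both vanish. Concretely, for $X=[x_{kl}]\in\R_{\preceq}$ put
\[
\rho(X):=\begin{cases}\lambda,& x_{ii}=x_{jj}=0,\\ 1,&\text{otherwise},\end{cases}
\qquad
\Theta(X):=X+(\rho(X)-1)x_{ij}E_{ij}.
\]
So $\Theta(X)$ has $(i,j)$-entry $\rho(X)x_{ij}$ and all other entries equal to those of $X$. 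Clearly $\Theta$ maps $\R_{\preceq}$ into $\R_{\preceq}$.

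Injectivity is immediate. $\rho(X)$ depends only on the diagonal entries, which $\Theta$ leaves unchanged, and $\rho(X)\neq 0$, so $X$ is recovered from $\Theta(X)$. Nonadditivity is also immediate: $\Theta(E_{ii})+\Theta(E_{ij})=E_{ii}+\lambda E_{ij}$, whereas $\Theta(E_{ii}+E_{ij})=E_{ii}+E_{ij}$, and these differ since $\lambda\ne1$.

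The key structural step uses Lemma~\ref{lem:isolated-equiv}. For $X,Y\in\R_{\preceq}$, the $(k,l)$-entry of $XYX$ is $\sum_{a,b}x_{ka}y_{ab}x_{bl}$ with $k\preceq a\preceq b\preceq l$. I will show that $x_{ij}$ or $y_{ij}$ can contribute only to the $(i,j)$-entry. A contribution of $x_{ij}$ in the first factor forces $k\preceq i$, hence $k=i$, and $j\preceq b\preceq l$, hence $b=l=j$. The other positions are analogous. Moreover, in the $(i,j)$-entry all intermediate indices lie in $\{i,j\}$, which gives
\[
(XYX)_{ij}=x_{ii}y_{ii}x_{ij}+x_{ii}y_{ij}x_{jj}+x_{ij}y_{jj}x_{jj}.
\]
Hence $\Theta(X)\Theta(Y)\Theta(X)$ and $\Theta(XYX)$ agree off $(i,j)$. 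Since $\lambda$ is central, the scalars $\rho(\cdot)$ pull out of products, and it remains to check the single identity
\[
\rho(XYX)\,s=\rho(X)x_{ii}y_{ii}x_{ij}+\rho(Y)x_{ii}y_{ij}x_{jj}+\rho(X)x_{ij}y_{jj}x_{jj},
\]
where $s$ denotes the displayed expression for $(XYX)_{ij}$. Here $(XYX)_{ii}=x_{ii}y_{ii}x_{ii}$ and $(XYX)_{jj}=x_{jj}y_{jj}x_{jj}$.

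The main work is a short case analysis of this identity, which I expect to be the only delicate point.

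\emph{Case $x_{ii}=x_{jj}=0$.} Both sides vanish.

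\emph{Case $x_{ii}\ne0\ne x_{jj}$.} Then $\rho(X)=1$. If $y_{ii}=y_{jj}=0$, then $\rho(XYX)=\lambda=\rho(Y)$ and only the middle term survives, so both sides are $\lambda x_{ii}y_{ij}x_{jj}$. Otherwise $\rho(XYX)=1=\rho(Y)$, since $\DD$ has no zero divisors.

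\emph{Case exactly one of $x_{ii},x_{jj}$ vanishes.} By symmetry take $x_{ii}=0\ne x_{jj}$. Then $\rho(X)=1$ and $s=x_{ij}y_{jj}x_{jj}$. If $y_{jj}\ne0$, then $(XYX)_{jj}\ne0$, so $\rho(XYX)=1$ and both sides equal $s$. If $y_{jj}=0$, then $s=0$ and the right-hand side is $0$.

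Together these cases establish the Jordan semi-triple identity for $\Theta$, and hence the proposition.
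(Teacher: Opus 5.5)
Your proposal is correct and follows essentially the same route as the paper. It uses the same deformation $\Theta$, rescaling the $(i,j)$-coordinate by a central $\lambda\in Z(\DD)\setminus\{0,1\}$ exactly when $x_{ii}=x_{jj}=0$, and the same appeal to Lemma~\ref{lem:isolated-equiv} to confine every effect to the $(i,j)$-entry and to get the three-term formula for $(XYX)_{ij}$; the only difference is cosmetic, since you organize the final case check by the diagonal pattern of $X$ alone, whereas the paper splits according to whether $(x_{ii},x_{jj})$ or $(y_{ii},y_{jj})$ vanishes.
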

	
	\begin{proof}
		Choose $\lambda\in Z(\DD)\setminus\{0,1\}$. Define $\Theta:\R_{\preceq}\to\R_{\preceq}$ by fixing all entries except the $(i,j)$-entry and setting
		\[
		(\Theta(X))_{ij}:=\begin{cases}
			\lambda x_{ij},&x_{ii}=0\text{ and }x_{jj}=0,\\
			x_{ij},&\text{otherwise.}
		\end{cases}
		\]
		The map is clearly injective. It is not additive, since
		\[
		\Theta(E_{ij})=\lambda E_{ij},
		\qquad
		\Theta(E_{ii}+E_{ij})=E_{ii}+E_{ij}
		\ne E_{ii}+\lambda E_{ij}=\Theta(E_{ii})+\Theta(E_{ij}).
		\]
		We now prove that $\Theta$ satisfies the Jordan semi-triple identity. By
		Lemma~\ref{lem:isolated-equiv}, the absence of vertices below $i$ and
		above $j$ implies that for every $X=[x_{pq}]\in\R_{\preceq}$ and $a \in \DD$
		we have
		\begin{equation}\label{eq:bad-E-identities}
			X(aE_{ij})=(x_{ii}a)E_{ij},\qquad
			E_{ij}(aX)=(ax_{jj})E_{ij}.    
		\end{equation}
		Fix $X=[x_{pq}], Y=[y_{pq}] \in \R_{\preceq}$. Again, by invoking Lemma~\ref{lem:isolated-equiv}, we obtain 
		\begin{align}\label{eq:bad-interval-entry}
			(XYX)_{ij}
			&=
			x_{ii}y_{ii}x_{ij}
			+
			x_{ii}y_{ij}x_{jj}
			+
			x_{ij}y_{jj}x_{jj},\\
			(XYX)_{ii}& =x_{ii}y_{ii}x_{ii}, \qquad (XYX)_{jj}=x_{jj}y_{jj}x_{jj}. \nonumber  
		\end{align}
		For $Z=[z_{pq}]\in\R_{\preceq}$, write
		\[
		\varepsilon_Z:=
		\begin{cases}
			\lambda, & z_{ii}=z_{jj}=0,\\
			1, & \text{otherwise}.
		\end{cases}
		\]
		Then
		\[
		\Theta(Z)=Z+(\varepsilon_Z-1)z_{ij}E_{ij}.
		\]
		Since $\lambda\in Z(\DD)$, the coefficient $\varepsilon_Z-1$ is central.
		Using \eqref{eq:bad-E-identities}, all terms in
		$\Theta(X)\Theta(Y)\Theta(X)$ containing at least two deformation terms $E_{ij}$ 
		vanish. Hence
		\begin{align}\label{eq:theta(X)theta(Y)theta(X)}
			\Theta(X)\Theta(Y)\Theta(X)&= XYX
			+XY\left((\varepsilon_X-1)x_{ij}E_{ij}\right) 
			+X\left((\varepsilon_Y-1)y_{ij}E_{ij}\right)X  \nonumber \\
			& \hspace{1.5cm} +\left((\varepsilon_X-1)x_{ij}E_{ij}\right)YX \\ \nonumber
			&=
			XYX
			+(\varepsilon_X-1)x_{ii}y_{ii}x_{ij}E_{ij}
			+(\varepsilon_Y-1)x_{ii}y_{ij}x_{jj}E_{ij} 
			+(\varepsilon_X-1)x_{ij}y_{jj}x_{jj}E_{ij}. \nonumber
		\end{align}
		Therefore, by \eqref{eq:bad-interval-entry}, the $(i,j)$-entry of
		$\Theta(X)\Theta(Y)\Theta(X)$ is
		\begin{equation}\label{eq:bad-rhs-entry}
			\varepsilon_X
			\left(x_{ii}y_{ii}x_{ij}+x_{ij}y_{jj}x_{jj}\right)
			+
			\varepsilon_Y x_{ii}y_{ij}x_{jj}.
		\end{equation}
		On the other hand, the $(i,j)$-entry of $\Theta(XYX)$ is obtained from
		\eqref{eq:bad-interval-entry} by multiplying it by $\lambda$ precisely when
		\[
		x_{ii}y_{ii}x_{ii}=0
		\quad\text{and}\quad
		x_{jj}y_{jj}x_{jj}=0.
		\]
		We compare the two expressions. If $x_{ii}=x_{jj}=0$, then
		\eqref{eq:bad-interval-entry} is zero, so both $(i,j)$-entries are zero.
		If $y_{ii}=y_{jj}=0$, then \eqref{eq:bad-interval-entry} reduces to
		$x_{ii}y_{ij}x_{jj}$, and both sides multiply this term by $\lambda$. It remains to consider the case where neither
		$(x_{ii},x_{jj})$ nor $(y_{ii},y_{jj})$ is $(0,0)$. Then
		$\varepsilon_X=\varepsilon_Y=1$, so \eqref{eq:bad-rhs-entry} equals
		\eqref{eq:bad-interval-entry}. If at least one of
		$x_{ii}y_{ii}x_{ii}$ and $x_{jj}y_{jj}x_{jj}$ is nonzero, then
		$\Theta$ also does not deform $XYX$, and the two entries agree. If both
		are zero, then, since $\DD$ has no zero divisors,
		\[
		x_{ii}=0 \, \text{ or } \, y_{ii}=0,
		\qquad
		x_{jj}=0 \, \text{ or } \, y_{jj}=0.
		\]
		After excluding the two already treated possibilities, the only remaining
		cases are
		\[
		x_{ii}=y_{jj}=0,
		\qquad\text{or}\qquad
		y_{ii}=x_{jj}=0.
		\]
		In both cases \eqref{eq:bad-interval-entry} is zero. Hence the
		$(i,j)$-entries of $\Theta(XYX)$ and
		$\Theta(X)\Theta(Y)\Theta(X)$ agree in all cases.  All other entries agree automatically. Indeed, by the definition of $\Theta$, we have \[
		\supp(\Theta(Z)-Z) \subseteq \{(i,j)\}, \qquad \text{for all } Z\in \R_{\preceq}.
		\]
		Furthermore, the expansion \eqref{eq:theta(X)theta(Y)theta(X)} shows that 
		\[
		\supp(\Theta(X)\Theta(Y)\Theta(X)-XYX) \subseteq \{(i,j)\}.
		\]
		Therefore, the matrices $\Theta(X)\Theta(Y)\Theta(X)$ and $\Theta(XYX)$ possibly differ only in the position $(i,j)$. As we already checked that their $(i,j)$-elements agree,  it follows that $\Theta(X)\Theta(Y)\Theta(X)=\Theta(XYX)$. Therefore, $\Theta$ is a Jordan semi-triple map, as claimed.
	\end{proof}
	
	\begin{remark}\label{rem:comparison-ps}
		The nonisolation condition is the analogue, in the present setting, of the
		local graph condition used in the study of injective commutativity- and
		spectrum-preserving maps on structural matrix algebras; see
		\cite[Theorem~3.7 and Remark~3.8]{GT-Petek-Semrl}. There, a comparable
		nondiagonal coordinate is controlled by the existence of a third vertex
		comparable with both endpoints. For a nonsymmetric comparable pair
		$i\preceq j$, $j\not\preceq i$, this common-neighbour condition is exactly
		our trichotomy: the third vertex lies below $i$, between $i$ and $j$, or above
		$j$.
	\end{remark}
	
	We now state the main result of the paper:
	
	\begin{theorem}\label{thm:main}
		Let $\DD$ be a division ring with $|Z(\DD)|>2$, and let $\R_{\preceq}\subseteq M_n(\DD)$, $n \ge 1$, be an SMR associated with the preorder $\preceq$ on $[n]$. Then every injective Jordan semi-triple map $\phi:\R_{\preceq}\to M_n(\DD)$ is additive if and only if the following two conditions hold:
		\begin{description}[leftmargin=2.7em,style=nextline]
			\item[$(\mathsf P)$] every nonsymmetric comparable pair $i\preceq j$, $j\not\preceq i$, is nonisolated;
			\item[$(\mathsf S)$] if $\Gamma_{\preceq}$ has a singleton connected component, then $\DD\cong\FF_3$ or $\DD\cong\FF_4$.
		\end{description}
	\end{theorem}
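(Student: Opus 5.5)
Necessity is handled by the results already established. If $(\mathsf P)$ fails, Proposition~\ref{prop:bad-interval} gives a nonadditive injective Jordan semi-triple map $\Theta:\R_{\preceq}\to\R_{\preceq}\subseteq M_n(\DD)$. If $(\mathsf S)$ fails, some index $i$ forms a singleton component and $\DD\not\cong\FF_3,\FF_4$. Since $|Z(\DD)|>2$, also $\DD\not\cong\FF_2$, so by Proposition~\ref{prop:scalar-classification} there is a nonadditive injective scalar Jordan semi-triple map $f$; one can take $f=\eta$, inversion extended by zero. By Lemma~\ref{lem:central-components}, $\R_{\preceq}=\DD E_{ii}\oplus P_{C}\R_{\preceq}P_{C}$ with $C=[n]\setminus\{i\}$, a direct sum of rings. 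The map that applies $f$ to the $(i,i)$-entry and fixes everything else is injective and nonadditive. It is also a Jordan semi-triple map, because $XYX$ computes componentwise in a direct sum of rings.

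For sufficiency, assume $(\mathsf P)$ and $(\mathsf S)$, and let $\phi$ be an injective Jordan semi-triple map. The plan is to normalize $\phi$ step by step.

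\textbf{Step 1.} Show $\phi(0)=0$ by an argument like the scalar one, using injectivity. Then $\phi$ preserves the set of tripotents ($X^3=X$), and $\phi(I)=:U$ satisfies $U^3=U$ with $U\phi(Y)U=\phi(Y)$. This should force $\phi(\R_{\preceq})\subseteq UM_n(\DD)U$ and make $U$ behave as a unit.

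\textbf{Step 2.} Study the idempotents $E_{ii}$ and their sums $P_S$. From $P_SP_TP_S=P_{S\cap T}$ when $S\subseteq T$ or $T\subseteq S$, and from orthogonality relations such as $E_{ii}E_{jj}E_{ii}=0$, deduce that the $\phi(E_{ii})$ are nonzero, pairwise ``triple-orthogonal'' tripotents, with rank controlled by injectivity. A dimension count in $M_n(\DD)$ should then give that each $\phi(E_{ii})$ has rank one. After conjugating by an invertible matrix and multiplying by a central sign matrix, we may assume $\phi(E_{ii})=\pm E_{ii}$, with the sign constant on components.

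\textbf{Step 3.} Use $E_{ii}XE_{ii}=x_{ii}E_{ii}$ and, more generally, $P_SXP_S$ to show that $\phi$ respects supports: $\phi(aE_{ij})\in\DD E_{\sigma(i)\sigma(j)}$, or its transpose, according to whether the component behaves multiplicatively or antimultiplicatively. This yields scalar maps $f_{ij}$ with $\phi(aE_{ij})=f_{ij}(a)E_{ij}$. Each diagonal $f_{ii}$ is an injective scalar Jordan semi-triple map.

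\textbf{Step 4 (the main obstacle).} Prove additivity of the $f_{ij}$ and the reconstruction $\phi(X)=\sum\phi(x_{ij}E_{ij})$. For this, use identities such as
\[
(aE_{ij}+bE_{jk})E_{jj}(aE_{ij}+bE_{jk})=abE_{ik}
\]
and
\[
(E_{ii}+aE_{ij})(E_{jj})(E_{ii}+aE_{ij})=\dots
\]
to obtain the functional equation $f_{ik}(ab)=f_{ij}(a)f_{jj}(1)f_{jk}(b)$ for nonsymmetric pairs, using $(\mathsf P)$ to supply the third index $k$.

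Combining these transitive-multiplier relations with suitable semi-triple identities involving sums like $(E_{ii}+E_{jj}+aE_{ij})$, I would extract Cauchy-type additivity of $f_{ij}$. I expect this to need $|\DD|>2$ to choose auxiliary scalars. Diagonal entries in a component with at least two vertices then inherit additivity from the off-diagonal ones via $f_{ii}(a)\sim f_{ij}(a\cdot 1)$. Singleton components are additive by $(\mathsf S)$ and Proposition~\ref{prop:scalar-classification}.

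Finally, show that $\phi(X)$ is determined entrywise by expressing each entry through products $E_{pp}XE_{qq}$-type triple products. Doing this carefully for non-idempotent sandwiches is the delicate part.
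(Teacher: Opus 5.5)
Your necessity argument is correct and is the same as the paper's. The sufficiency outline, however, skips the two steps that actually carry the proof.

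\textbf{The orthogonality step.} In Step~2 you go from rank-one, pairwise ``triple-orthogonal'' images of the $E_{ii}$ to a simultaneous conjugation $\phi(E_{ii})\mapsto\pm E_{ii}$. That conjugation needs $\Phi(E_{ii})\Phi(E_{jj})=0$, and no rank count over idempotents gives it. In $M_2(\DD)$, the assignment $0\mapsto0$, $E_{11}\mapsto E_{11}$, $E_{22}\mapsto E_{22}+E_{12}$, $I\mapsto I$ satisfies every semi-triple relation among $0,E_{11},E_{22},I$, yet $E_{11}(E_{22}+E_{12})=E_{12}\ne0$. Over $\FF_2$ it is in fact an injective Jordan semi-triple map on $\FF_2E_{11}\oplus\FF_2E_{22}$.

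The paper excludes this configuration in Lemma~\ref{lem:two-point-rigidity} by looking beyond idempotents. There $\Phi(bE_{jj})=\rho(b)R$ and $\Phi(E_{ii}+bE_{jj})=\diag(1,\rho(b))$. The sandwiches $E_{jj}(E_{ii}+bE_{jj})E_{jj}=bE_{jj}$ and $(E_{ii}+bE_{jj})E_{jj}(E_{ii}+bE_{jj})=b^2E_{jj}$ force $\rho(b)=\rho(b)^2$, hence $\rho(b)=1$ for all $b\ne0$. This contradicts injectivity once $|\DD|>2$, which is the first essential use of that hypothesis. Without this step there is no compression identity $\Phi(P_SXP_S)=P_S\Phi(X)P_S$, so Step~3 has nothing to stand on.

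Relatedly, $\phi(0)=0$ does not follow from the scalar argument, since $\phi(0)$ need not be invertible. The paper obtains it from the strict sandwich chain $0\vartriangleleft_s P_1\vartriangleleft_s\cdots\vartriangleleft_s I$ and rank monotonicity (Lemma~\ref{lem:normalization}).

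\textbf{The use of $(\mathsf P)$.} In Step~4 you use $(\mathsf P)$ only to get the chain relation $f_{ik}(ab)=f_{ij}(a)f_{jk}(b)$. That relation is purely multiplicative and says nothing about sums. In the paper, additivity comes from $\Phi(X^2)=\Phi(X)^2$ with $X=aE_{ii}+bE_{jj}+E_{ij}$, which requires knowing $\Phi$ on mixed elements $aE_{ii}+bE_{jj}+\alpha E_{ij}$.

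For a nonsymmetric pair, the semi-triple identities inside the $\{i,j\}$-corner cannot determine the off-diagonal entry of that image. Proposition~\ref{prop:bad-interval}, applied to $T_2(\DD)$, deforms exactly this entry. Your ``$E_{pp}XE_{qq}$-type'' products are not of the form $XYX$, so the identity gives no information about them.

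The missing idea is Proposition~\ref{prop:nonisolated-two-point}: pass to the three-point corner containing the witness $k$.
\begin{itemize}
\item For an upper witness, take $A=aE_{ii}+bE_{jj}+\alpha E_{ij}+E_{kk}$. The $\{i,k\}$- and $\{j,k\}$-compressions of $\Phi(A)$ are diagonal. The sandwiches $AE_{jk}A=\alpha E_{ik}+bE_{jk}$ and $AE_{ik}A=aE_{ik}$, combined with the chain relation, pin down the two unknown $\{i,j\}$ entries.
\item The lower witness is analogous.
\item The middle witness uses $ABA$ with $A=E_{ii}+E_{jj}+rE_{ik}+sE_{kj}$, $rs=\alpha$.
\end{itemize}
Global additivity then reduces to two-point corners via Lemma~\ref{lem:two-point-compressions}. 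You also still need the symmetric corners $i\asymp j$, which are copies of $M_2(\DD)$ and are handled by Proposition~\ref{prop:m2-rigidity}; your plan does not address them.
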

	
	The proof of Theorem~\ref{thm:main} is given in the next section. We first
	point out how the centre-size hypothesis is used. The sufficiency part only
	requires $|\DD|>2$: if $(\mathsf P)$ and $(\mathsf S)$ hold, then every
	injective Jordan semi-triple map $\R_{\preceq}\to M_n(\DD)$ is additive. The
	stronger assumption $|Z(\DD)|>2$ is needed for the converse, specifically in
	Proposition~\ref{prop:bad-interval}, where the deformation of an isolated
	nonsymmetric comparable pair requires a central scalar different from $0$ and
	$1$. The examples in Section~\ref{sec:centre-size-hypothesis} show that both
	restrictions are genuine: if $\DD\cong\FF_2$, then $(\mathsf P)$ and
	$(\mathsf S)$ need not imply automatic additivity, while if
	$Z(\DD)\cong\FF_2$, automatic additivity may hold even when $(\mathsf P)$
	fails.
	
	\begin{remark}\label{rem:triangular-applications}
		By Example~\ref{ex:T_n-SMR}, condition $(\mathsf P)$ holds for $T_n(\DD)$
		whenever $n\ge3$, and more generally for any intermediate ring $T_n(\DD) \subseteq \R \subseteq M_n(\DD)$ (i.e.\ a block upper-triangular ring). In all these cases the comparability
		graph is connected and nonsingleton, so condition $(\mathsf S)$ is void.
		Hence, if $|Z(\DD)|>2$, Theorem~\ref{thm:main} implies that every injective
		Jordan semi-triple map from any such ring $\R$ into $M_n(\DD)$ is additive.
	\end{remark}
	
	\section{Proof of the main theorem}\label{sec:proof-main}
	
	Throughout this section fix a preorder $\preceq$ on $[n]$, and write $\R:=\R_{\preceq}$ for the corresponding SMR. Let $\DD$ be a fixed division ring with $\abs{\DD} > 2$. The proof proceeds in several steps, and we do not assume $(\mathsf P)$ or $(\mathsf S)$ until the final step. If $n=1$, then
	$\R\cong\DD$, condition $(\mathsf P)$ is vacuous, and the assertion follows
	from Proposition~\ref{prop:scalar-classification}. We shall therefore assume
	throughout that $n\ge2$.
	
	\subsection{Normalization and diagonal idempotents}\label{sec:normalization}
	The following normalization argument is inspired by \cite[Proposition~3 and Corollary~4]{Lesnjak-Sze}. We include the details because we deal with a structural matrix ring with coefficients in an arbitrary division ring.
	
	\begin{lemma}\label{lem:normalization}
		Let $\phi:\R\to M_n(\DD)$ be an injective Jordan semi-triple map. Further, set
		$J:=\phi(I)$ and define
		\begin{equation}\label{eq:definition-of-Phi}
			\Phi:\R\to M_n(\DD), \qquad \Phi(X):=J\phi(X).
		\end{equation}
		Then $\Phi$ is an injective Jordan semi-triple map and the following hold
		\[
		\Phi(0)=0,
		\qquad
		\Phi(I)=I,
		\qquad
		J^2=I,
		\qquad
		\phi(0)=0.
		\]
		Moreover, $J$ commutes with $\Phi(\R)$, $\Phi$ maps idempotents in $\R$ to
		idempotents in $M_n(\DD)$, and
		\[
		\rank\Phi(P_S)=|S|,
		\qquad S\subseteq[n].
		\]
		Finally, $\phi$ is additive if and only if $\Phi$ is additive.
	\end{lemma}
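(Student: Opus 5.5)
We first establish $\phi(0)=0$, then derive $J^2=I$ and the commutation of $J$ with $\phi(\R)$, and only then pass to $\Phi$ and the rank statement.

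\emph{Step 1: $\phi(0)=0$.} Put $N:=\phi(0)$. Taking $X=0$ gives $N=NYN$ for every $Y$ in the image of $\phi$. Taking $Y=0$ gives $\phi(X0X)=\phi(0)$, that is, $N=\phi(X)N\phi(X)$ for all $X$. Suppose $N\neq0$.
\begin{itemize}
\item The relation $N=N\phi(Y)N$ with $Y=0$ gives $N^3=N$.
\item Hence $N\phi(Y)N=N$ for all $Y$, so $N(\phi(Y)-\phi(Y'))N=0$ for all $Y,Y'$.
\end{itemize}
To contradict injectivity we use rank. Choose $X$ with $\phi(X)$ singular if possible; more directly, use the second identity with $X=N$-preimage arguments. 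The cleanest route is probably a dimension count. If $\rank N=r$, then $N\phi(Y)N=N$ pins down an $r\times r$ block of $\phi(Y)$ (in suitable bases) to be a fixed matrix. Combining this with $N=\phi(X)N\phi(X)$ should make $\phi(X)$ restrict invertibly on the relevant pieces. I expect this is where care is needed. An alternative is to exploit $\phi(XYX)$ for $X$ a nilpotent such as $E_{ij}$ with $i\prec j$, where $XYX=0$ for suitable $Y$, or $X=E_{ii}$, $Y=E_{jj}$. These give $N=\phi(E_{ii})\phi(E_{jj})\phi(E_{ii})$ and similar relations. Feeding these into $N=N\phi(Y)N$ should yield an infinite family of constraints incompatible with injectivity. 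In the worst case I would mimic \cite[Proposition~3]{Lesnjak-Sze}.

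\emph{Step 2: properties of $J$.} From $I=I\cdot I\cdot I$ we get $J=J^3$. From $X=IXI$ we get $\phi(X)=J\phi(X)J$. From $X^3$ computed two ways, $\phi(X^3)=\phi(X\cdot X\cdot X)=\phi(X)^3$, and also $\phi(X)=\phi(I X I)$. Next, $\phi(I)=\phi(I\cdot X\cdot I)$ needs an inverse; instead use $X=IXI$ and $\phi(I)=\phi(X^{-1}\cdots)$, which is only available for invertible $X$. The key identity comes from $\phi(X^2)=\phi(X\,I\,X)=\phi(X)J\phi(X)$.

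\begin{itemize}
\item $J^2=I$. By Step 1, $\phi(0)=0$. Suppose $J^2\neq I$. Since $J^3=J$, the matrix $J^2$ is an idempotent. Every value satisfies $\phi(X)=J\phi(X)J$, hence $\phi(X)=J^2\phi(X)J^2$, so every value is supported in the corner $J^2M_n(\DD)J^2$, of rank $r<n$. We then need a rank argument. Consider the $n$ orthogonal idempotents $E_{ii}$: their images $\phi(E_{ii})$ satisfy $\phi(E_{ii})J\phi(E_{ii})=\phi(E_{ii})$ and $\phi(E_{ii})J\phi(E_{jj})J\phi(E_{ii})=\phi(E_{ii}E_{jj}E_{ii})=0$. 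Hence the $J\phi(E_{ii})$ are nonzero (by injectivity and $\phi(0)=0$) orthogonal-type idempotents in a space of rank $r$, forcing $r\ge n$, a contradiction.
\item Commutation. Once $J^2=I$, the identity $\phi(X)=J\phi(X)J$ gives $J\phi(X)=\phi(X)J$, so $J$ commutes with $\Phi(\R)$.
\end{itemize}

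\emph{Step 3: passing to $\Phi$.} Since $J$ commutes with the values of $\phi$ and $J^2=I$,
\[
\Phi(X)\Phi(Y)\Phi(X)=J^3\phi(X)\phi(Y)\phi(X)=J\phi(XYX)=\Phi(XYX).
\]
Injectivity of $\Phi$ follows from invertibility of $J$. We also get $\Phi(0)=0$ and $\Phi(I)=J^2=I$. For an idempotent $P$,
\[
\Phi(P)=\Phi(PIP)=\Phi(P)^2.
\]
Additivity transfers in both directions because multiplication by the invertible matrix $J$ is additive.

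\emph{Step 4: rank.} The $Q_i:=\Phi(E_{ii})$ are nonzero idempotents. For $i\ne j$,
\[
Q_iQ_jQ_i=\Phi(E_{ii}E_{jj}E_{ii})=0.
\]
We want $Q_iQ_j=0$. Write $A=Q_iQ_j$. Then $AQ_i=Q_iQ_jQ_i=0$ and $A=AQ_j$. Also $(Q_i+Q_j)$ need not be an image, but $\Phi(E_{ii}+E_{jj})=:R$ is an idempotent. It satisfies
\[
R Q_i R=\Phi\bigl((E_{ii}+E_{jj})E_{ii}(E_{ii}+E_{jj})\bigr)=Q_i
\]
and
\[
Q_iRQ_i=\Phi(E_{ii})=Q_i.
\]
These relations should imply that $Q_i,Q_j\le R$ and that $Q_iQ_j=0$. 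If that fails, I would fall back on using $\Phi(P_S)$ for $S$ growing one index at a time, with the same sandwich identities.

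Granting orthogonality, the $Q_i$ are pairwise orthogonal nonzero idempotents in $M_n(\DD)$. Each therefore has rank exactly $1$ and they sum to rank $n$. Finally, $\Phi(P_S)$ is an idempotent with $\Phi(P_S)Q_i\Phi(P_S)=Q_i$ for $i\in S$ and $Q_i\Phi(P_S)Q_i=0$ for $i\notin S$, which gives $\rank\Phi(P_S)=|S|$.

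The main obstacle is Step 1, forcing $\phi(0)=0$, together with the orthogonality in Step 4.
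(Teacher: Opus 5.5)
Your proposal has genuine gaps, and they come from proving things in the wrong order.

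\textbf{Step~1.} This is not a proof. You list possible strategies but carry none of them out, and Steps~2--4 all depend on $\phi(0)=0$.

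\textbf{Step~2.} The count ``forcing $r\ge n$'' is unjustified. For $Q_i:=J\phi(E_{ii})$ you only know $Q_i^2=Q_i\ne 0$ and $Q_iQ_jQ_i=0$. Such relations do not bound the number of idempotents by the rank. For instance, $E_{11}$, $E_{21}+E_{22}$, $E_{22}-E_{12}$ are three rank-one idempotents in $M_2(\DD)$ with $Q_iQ_jQ_i=0$ for all $i\ne j$.

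\textbf{Step~4.} The orthogonality $Q_iQ_j=0$ is only hoped for, and it is not a formal consequence of sandwich relations among images of idempotents. Take $\FF_2$ with indices $1,2$ incomparable. The map on $\R=\FF_2E_{11}\oplus\FF_2E_{22}$ fixing $0,E_{11},I$ and sending $E_{22}\mapsto E_{22}+E_{12}$ is an injective Jordan semi-triple map with $\Phi(E_{11})\Phi(E_{22})\ne 0$. In the paper, orthogonality is Lemma~\ref{lem:two-point-rigidity}. It is proved \emph{after} this lemma, using its rank conclusion together with $|\DD|>2$ and injectivity on the elements $bE_{jj}$. So you are trying to derive the lemma from a harder consequence of it. Even granting orthogonality, your last deduction uses the wrong relation: $Q_i\Phi(P_S)Q_i=0$ for $i\notin S$ does not by itself bound $\rank\Phi(P_S)$ from above, whereas $\Phi(P_S)Q_i\Phi(P_S)=0$ would.

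\textbf{What is missing.} Obtain commutation and the idempotent property first, without knowing $\phi(0)=0$ or $J^2=I$, and then let ranks do the work.

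You already observed that $J^2$ is idempotent and $\phi(X)=J^2\phi(X)J^2$. This gives $J^2\phi(X)=\phi(X)=\phi(X)J^2$, hence $J\phi(X)=J(J\phi(X)J)=J^2\phi(X)J=\phi(X)J$. It follows that:
\begin{itemize}
\item $\Phi=J\phi$ is a Jordan semi-triple map;
\item $\Phi$ is injective, since $\phi(X)=J^2\phi(X)=J\Phi(X)$;
\item for idempotent $P$, $\Phi(P)=J\phi(P)J\phi(P)=\phi(P)^2$ is idempotent, because $\phi(P)^3=\phi(P)$.
\end{itemize}

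Now use the sandwich order $P\trianglelefteq_s Q\iff QPQ=P$. For idempotents, $P\trianglelefteq_s Q$ with $P\ne Q$ forces $\rank P<\rank Q$. Moreover, $\Phi$ preserves $\trianglelefteq_s$, and by injectivity it preserves strictness. The chain $0\vartriangleleft_s E_{11}\vartriangleleft_s E_{11}+E_{22}\vartriangleleft_s\cdots\vartriangleleft_s I$ has $n+1$ members, so the ranks of the images must be exactly $0,1,\dots,n$. This gives $\Phi(0)=0$ and $\Phi(I)=I$, whence $\phi(0)=J\Phi(0)=0$ and $J^2=\Phi(I)=I$. Strict chains from $0$ to $P_S$ and from $P_S$ to $I$ then give $\rank\Phi(P_S)\ge|S|$ and $\rank\Phi(P_S)\le|S|$ directly. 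No orthogonality is needed.
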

	
	\begin{proof}
		Applying the Jordan semi-triple identity with $X=Y=I$ gives $J^3=J$. Let $X \in \R$. Since $X=IXI$, it follows that
		\begin{equation}\label{eq:phi-J-sandwich}
			\phi(X)=J\phi(X)J.
		\end{equation}
		Then 
		\[
		J^2\phi(X)J^2
		=
		J^2(J\phi(X)J)J^2
		=
		J^3\phi(X)J^3
		=
		\phi(X).
		\]
		Since $J^2$ is idempotent, multiplying this equality by $J^2$ on the left and on the right gives
		\begin{equation}\label{eq:J^2phi}
			J^2\phi(X)=\phi(X)=\phi(X)J^2 .
		\end{equation}
		Hence, 
		\[
		J\phi(X)
		\stackrel{\eqref{eq:phi-J-sandwich}}=
		J(J\phi(X)J)
		=
		J^2\phi(X)J
		\stackrel{\eqref{eq:J^2phi}}=
		\phi(X)J .
		\]
		Thus $J$ commutes with the image $\phi(\R)$ and with $\Phi(\R)=J\phi(\R)$.
		Consequently, for $X,Y \in \R$,
		\begin{align*}
			\Phi(X)\Phi(Y)\Phi(X)
			&=
			(J\phi(X))(J\phi(Y))(J\phi(X))  =
			J^3\phi(X)\phi(Y)\phi(X)
			=
			J\phi(XYX)\\
			&=
			\Phi(XYX),
		\end{align*}
		so $\Phi$ is a Jordan semi-triple map. If $\Phi(X)=\Phi(Y)$, then
		\[
		\phi(X)
		\stackrel{\eqref{eq:J^2phi}}=
		J^2\phi(X)
		=
		J\Phi(X)
		=
		J\Phi(Y)
		=
		J^2\phi(Y)
		\stackrel{\eqref{eq:J^2phi}}=
		\phi(Y),
		\]
		and injectivity of $\phi$ gives $X=Y$.
		
		Let $P\in\R$ be idempotent. Since $P=PIP$, we have
		\[
		\phi(P)=\phi(P)J\phi(P).
		\]
		Multiplying by $J$ on the left and using 
		$J\phi(P) = \phi(P)J$ gives
		\[
		\Phi(P)= J\phi(P) = J(\phi(P)J\phi(P)) = J^2 \phi(P)^2 \stackrel{\eqref{eq:J^2phi}}= \phi(P)^2.
		\]
		Since $P=P^3$, also $\phi(P)^3=\phi(P)$, and therefore
		\[
		\Phi(P)^2=\phi(P)^4=\phi(P)^2=\Phi(P).
		\]
		Thus $\Phi$ maps idempotents in $\R$ to idempotents in $M_n(\DD)$.
		
		For arbitrary idempotents $P,Q \in M_n(\DD)$, write
		\[
		P\trianglelefteq_s Q
		\quad\stackrel{\text{def}}\Longleftrightarrow\quad
		QPQ=P,
		\]
		and write $P\vartriangleleft_s Q$ if $P\trianglelefteq_s Q$ and $P\ne Q$.
		Note that
		\begin{equation}\label{eq:rank-observation}
			P\vartriangleleft_s Q \quad\implies\quad \rank P<\rank Q.
		\end{equation}
		Indeed, $QPQ=P$ implies $\operatorname{Im}P\subseteq\operatorname{Im}Q$.
		If these images had the same right $\DD$-dimension, then they would be
		equal. Since an idempotent acts as the identity on its image, this would give
		$QP=P$ and $PQ=Q$, whence $QPQ=Q$, contrary to $P\ne Q$.
		
		The Jordan semi-triple identity implies directly that
		\[
		P \trianglelefteq_s Q \quad \implies \quad \Phi(P) \trianglelefteq_s \Phi(Q).
		\]
		The diagonal idempotents
		\[
		P_0:=0,
		\qquad
		P_r:=E_{11}+\cdots+E_{rr}, \quad 1\le r\le n,
		\]
		form a strict sandwich chain
		\[
		0=P_0\vartriangleleft_s P_1\vartriangleleft_s\cdots\vartriangleleft_s P_n=I.
		\]
		By the injectivity of $\Phi$, their images under $\Phi$ form a strict sandwich chain of idempotents in
		$M_n(\DD)$. Hence \eqref{eq:rank-observation} gives
		\[
		\rank\Phi(P_0)<\rank\Phi(P_1)<\cdots<\rank\Phi(P_n).
		\]
		Since each rank is an integer between $0$ and $n$, this sequence must be $0<1<\cdots<n$. In particular,
		\[
		\Phi(0)=0,
		\qquad
		\Phi(I)=I,
		\]
		which also directly implies $\phi(0) = 0$ and $J^2 = \phi(I)^2 = I$. Further, since $J$ is invertible,  it is clear that $\phi$ is additive if and
		only if $\Phi$ is additive.
		
		We now prove the rank assertion for all diagonal idempotents. Let
		$S\subseteq[n]$ and put $s:=|S|$. Choose enumerations
		\[
		S=\{i_1,\ldots,i_s\},
		\qquad
		[n]\setminus S=\{j_1,\ldots,j_{n-s}\}.
		\]
		Then
		\[
		0
		\vartriangleleft_s
		E_{i_1i_1}
		\vartriangleleft_s
		(E_{i_1i_1}+E_{i_2i_2})
		\vartriangleleft_s
		\cdots
		\vartriangleleft_s
		P_S
		\]
		is a strict sandwich chain of length $s+1$ from $0$ to $P_S$, and
		\[
		P_S
		\vartriangleleft_s
		(P_S+E_{j_1j_1})
		\vartriangleleft_s
		(P_S+E_{j_1j_1}+E_{j_2j_2})
		\vartriangleleft_s
		\cdots
		\vartriangleleft_s
		I
		\]
		is a strict sandwich chain of length $n-s+1$ from $P_S$ to $I$. Applying \eqref{eq:rank-observation} to the images of these two chains gives $\rank\Phi(P_S)\ge s$ and $\rank\Phi(P_S)\le s$, respectively. Therefore, $\rank\Phi(P_S)=|S|$.
	\end{proof}
	
	Given an injective Jordan semi-triple map $\phi: \R \to M_n(\DD)$, from now until Proposition~\ref{prop:suff-additivity} we work with a normalized map from \eqref{eq:definition-of-Phi}, denoted by $\Phi$. Furthermore, replacing $\Phi$ by a map $X\mapsto S\Phi(X)S^{-1}$, where $S \in \GL_n(\DD)$, preserves all the properties stated in Lemma~\ref{lem:normalization} and also preserves additivity. We shall use this convention without further comment.
	
	The following lemma is inspired by the elementary matrix algebra argument in the normalization method of \cite[Section~2]{Lesnjak-Sze}.
	
	\begin{lemma}\label{lem:two-point-rigidity}
		For $i \ne j$, we have
		\[
		\Phi(E_{ii})\Phi(E_{jj}) = \Phi(E_{jj})\Phi(E_{ii}) = 0, \qquad \Phi(E_{ii}+ E_{jj}) = \Phi(E_{ii})+\Phi(E_{jj}).
		\]
	\end{lemma}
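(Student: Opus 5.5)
Write $P:=\Phi(E_{ii})$, $Q:=\Phi(E_{jj})$ and $R:=\Phi(E_{ii}+E_{jj})$. By Lemma~\ref{lem:normalization} these are idempotents with $\rank P=\rank Q=1$ and $\rank R=2$. The plan is to use the sandwich relations that the Jordan semi-triple identity transfers from the diagonal idempotents, and then to do elementary rank-one linear algebra.

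First, since $E_{ii}E_{jj}E_{ii}=0$ and $\Phi(0)=0$, we get $PQP=0$ and, symmetrically, $QPQ=0$. Also $E_{ii}\trianglelefteq_s E_{ii}+E_{jj}$ and $E_{jj}\trianglelefteq_s E_{ii}+E_{jj}$, so $RPR=P$ and $RQR=Q$. In particular $\operatorname{Im}P,\operatorname{Im}Q\subseteq\operatorname{Im}R$. Applying the same reasoning to transposed data (row spaces), we also get that the row spaces of $P$ and $Q$ lie in that of $R$. Moreover $(E_{ii}+E_{jj})E_{ii}(E_{ii}+E_{jj})=E_{ii}$ is already used; additionally $E_{ii}(E_{ii}+E_{jj})E_{ii}=E_{ii}$ gives $PRP=P$, and likewise $QRQ=Q$.

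Next, exploit rank one. Write $P=uf$ and $Q=vg$ with column vectors $u,v$ and row vectors $f,g$ satisfying $fu=1=gv$. Then $PQP=u(fv)(gu)f=0$ forces $(fv)(gu)=0$, so $fv=0$ or $gu=0$. I want both to vanish, i.e.\ $PQ=QP=0$. Suppose, say, $fv=0$ but $gu\neq0$. Since $\operatorname{Im}R=\operatorname{span}\{u,v\}$ has dimension $2$ (the rank of $R$ is $2$ and $u,v\in\operatorname{Im}R$; $u,v$ are independent because $fu=1$ and $fv=0$), and the row space of $R$ is $\operatorname{span}\{f,g\}$, the idempotent $R$ is determined as the projection with image $\operatorname{span}\{u,v\}$ along the common kernel of $f,g$. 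In the basis $u,v$ of the image, $R$ restricted to it is the identity, so $f R = f$ on the image, and one checks $R=P+Q-\dots$; the consistency conditions $fu=1$, $fv=0$, $gv=1$, $gu=c\ne0$ then describe $R$ explicitly. The contradiction should come from injectivity: I plan to produce a second element of $\R$ with the same image. The natural candidate is to use $|\DD|>2$ and the idempotents $E_{ii}+aE_{ij}$ or diagonal elements like $E_{ii}+tE_{jj}$, but the cleaner route is the following: consider $X=E_{ii}+E_{jj}$ and $Y=E_{ii}-E_{jj}$ (or $E_{ii}+tE_{jj}$ with $t\ne0,1$, available since $|\DD|>2$). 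Since $Y$ satisfies $YE_{ii}Y=E_{ii}$, $YE_{jj}Y=t^2E_{jj}$, and $Y(E_{ii}+E_{jj})Y=Y^2$, we derive relations on $\Phi(Y)$ that are incompatible with $gu\ne0$. This is the step I expect to be the main obstacle: ruling out the asymmetric configuration $PQ=0\ne QP$ (an oblique pair of rank-one idempotents). I would first try to show $R$ must equal $P+Q$ directly from $PRP=P$, $QRQ=Q$, $RPR=P$, $RQR=Q$ and the rank count, computing in the $2$-dimensional space $\operatorname{Im}R$ with dual basis; if $gu\ne0$ then $P+Q$ is not idempotent, and the matrix of $R$ on $\operatorname{span}\{u,v\}$ with respect to the constraints should force $gu=0$. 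In particular $RPR=P$ means $u(fR)$ restricted appropriately equals $uf$, so $fR=f$, and similarly $Ru=u$, $gR=g$, $Rv=v$. Then $R$ is the identity on $\operatorname{span}\{u,v\}$ and kills $\ker f\cap\ker g$; with $\rank R=2$ this gives $R$ as a well-defined projection only if the pairing matrix $\begin{pmatrix} fu & fv\\ gu & gv\end{pmatrix}=\begin{pmatrix}1&0\\ c&1\end{pmatrix}$ is compatible with $fR=f$: writing $R=u\alpha+v\beta$ with rows $\alpha,\beta$ in $\operatorname{span}\{f,g\}$ and imposing $Ru=u$, $Rv=v$, $fR=f$, $gR=g$ should yield $c=0$.

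Finally, with $fv=gu=0$ we have $PQ=QP=0$, so $P+Q$ is an idempotent. Since $R$ is the identity on $\operatorname{span}\{u,v\}$, its row space is $\operatorname{span}\{f,g\}$, and these are dual bases, it follows that $R=uf+vg=P+Q$, which is the claimed identity.
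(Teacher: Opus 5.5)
Your setup is sound: the factorizations $P=uf$, $Q=vg$, the dichotomy $fv=0$ or $gu=0$ from $PQP=0$, and the closing step $R=uf+vg=P+Q$ once $fv=gu=0$ are all fine. The gap is the step that carries the content, namely excluding the oblique case $fv=0\neq gu$. The route you say you would try first cannot succeed. Inside the corner $\operatorname{Im}R\cong M_2(\DD)$, the matrices $R=I_2$, $P=E_{11}$, $Q=E_{22}+E_{21}$ satisfy everything you propose to use: ranks $1,1,2$, $PQP=QPQ=0$, $PRP=RPR=P$, $QRQ=RQR=Q$, and hence $Ru=u$, $Rv=v$, $fR=f$, $gR=g$. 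Yet $gu=1$. Your $R$ is just the projection onto $\operatorname{span}\{u,v\}$ along $\ker f\cap\ker g$, and it exists for every value of $c$.

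In fact the lemma is false over $\FF_2$. On $\FF_2E_{11}\oplus\FF_2E_{22}\subseteq M_2(\FF_2)$, take the map fixing $0,E_{11},I$ and sending $E_{22}\mapsto E_{22}+E_{21}$. It is a normalized injective Jordan semi-triple map: here $XYX=XY$, and the sixteen pairs $(X,Y)$ check directly. Yet $\Phi(E_{22})\Phi(E_{11})=E_{21}\neq0$ and $\Phi(E_{11}+E_{22})=I\neq\Phi(E_{11})+\Phi(E_{22})$. So no argument using only images of idempotents and rank counts can force $c=0$. Non-idempotent elements and the hypothesis $|\DD|>2$ must enter essentially.

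Your hint with $Y=E_{ii}+tE_{jj}$ points in the right direction, but the mechanism is missing. The relations you list only express $\Phi(t^2E_{jj})$ and $\Phi(Y^2)$ in terms of $\Phi(Y)$, and give no contradiction on their own. The ingredient you need, which is the one the paper uses, is a confinement. After a diagonal similarity in $\operatorname{Im}R$ one may take $P=E_{11}$ and $Q=E_{22}+E_{12}$ or $Q=E_{22}+E_{21}$. For such $Q$ one has $QM_2(\DD)Q=\DD Q$, so $E_{jj}(bE_{jj})E_{jj}=bE_{jj}$ forces $\Phi(bE_{jj})=\rho(b)Q$, with $\rho(b)\neq0$ for $b\neq0$ by injectivity.

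The argument then runs as follows. The sandwiches of $A_b:=\Phi(E_{ii}+bE_{jj})$ with $E_{ii}$ give $A_b=\diag(1,u_b)$. The relation $E_{jj}(E_{ii}+bE_{jj})E_{jj}=bE_{jj}$ gives $u_b=\rho(b)$. The relation $(E_{ii}+bE_{jj})E_{jj}(E_{ii}+bE_{jj})=b^2E_{jj}$, read off in both nonzero entries of the oblique $Q$, gives $\rho(b)=\rho(b^2)=\rho(b)^2$. Hence $\Phi(bE_{jj})=Q=\Phi(E_{jj})$ for every $b\in\DD^\times$, which contradicts injectivity because $|\DD|>2$. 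Incidentally, $E_{ii}-E_{jj}=E_{ii}+E_{jj}$ in characteristic $2$ (e.g.\ over $\FF_4$), so that particular choice of $Y$ is useless there.
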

	
	\begin{proof}
		Denote
		\[
		P:=\Phi(E_{ii}),\qquad
		R:=\Phi(E_{jj}),\qquad
		Q:=\Phi(E_{ii}+E_{jj}).
		\]
		By Lemma~\ref{lem:normalization}, $P$ and $R$ are rank-one idempotents, while
		$Q$ is an idempotent of rank two. The identities
		\[
		E_{rr}=(E_{ii}+E_{jj})E_{rr}(E_{ii}+E_{jj}), \qquad r\in\{i,j\}
		\]
		give $P=QPQ$ and $R=QRQ$. Thus $P$ and $R$ belong to the corner $QM_n(\DD)Q$. As $\operatorname{Im} Q$ is a two-dimensional right $\DD$-space and $Q$ acts as the identity on it, the restriction to $\operatorname{Im} Q$ identifies the corner $QM_n(\DD)Q$ with the endomorphism ring $\operatorname{End}_{\DD}^{\,r}(\operatorname{Im} Q)$ of the right $\DD$-vector space $\operatorname{Im} Q$. After choosing a right $\DD$-basis of $\operatorname{Im} Q$, we identify this corner with $M_2(\DD)$. After composing the restricted map $\Phi|_{\DD E_{ii}\oplus \DD E_{jj}}$
		with this corner identification, we may work inside $M_2(\DD)$ and regard
		$Q$ as $I_2$. Since $P$ is a rank-one idempotent in $M_2(\DD)$, a further similarity inside
		$M_2(\DD)$ allows us to assume that $P=E_{11}$. The relation $E_{ii}E_{jj}E_{ii}=0$ then gives $PRP=0$.
		
		We first determine the possible form of $R$. Since $R_{11} = 0$, we can write
		\[
		R=\begin{bmatrix}0&\alpha\\ \beta&u\end{bmatrix},\qquad \alpha,\beta,u \in \DD.
		\]
		The equation $R^2=R$  gives
		\[
		\alpha\beta=0,
		\qquad \alpha u=\alpha,
		\qquad u\beta=\beta,
		\qquad \beta\alpha+u^2=u.
		\]
		Hence at least one of $\alpha,\beta$ is zero. If both are zero, $\rank(R) = 1$ forces $u=1$. If exactly one is nonzero, then the cancellation from the second or third equality again gives $u=1$. Thus we obtain one of the following:
		\[
		R=E_{22},
		\qquad R=E_{22}+\alpha E_{12}\ (\alpha\in \DD^\times),
		\qquad
		R=E_{22}+\beta E_{21}\ (\beta\in \DD^\times).
		\]
		A diagonal similarity inside $M_2(\DD)$, which fixes $P=E_{11}$ and preserves all hypotheses, reduces the two nonorthogonal cases respectively to
		\[
		R=E_{22}+E_{12}
		\qquad\text{or}\qquad
		R=E_{22}+E_{21}.
		\]
		Indeed, conjugation by $\diag(1,\alpha)$ sends $E_{22}+\alpha E_{12}$ to $E_{22}+E_{12}$, and conjugation by $\diag(1,\beta^{-1})$ sends $E_{22}+\beta E_{21}$ to $E_{22}+E_{21}$.
		
		We now rule out both nonorthogonal alternatives at once. Assume that
		$R=E_{22}+E_{12}$ or $R=E_{22}+E_{21}$. Put
		\[
		A_b:=\Phi(E_{ii}+bE_{jj})
		=
		\begin{bmatrix}
			r_b&s_b\\
			t_b&u_b
		\end{bmatrix}, \qquad r_b,s_b,t_b,u_b \in \DD.
		\]
		By applying $\Phi$ to
		\[
		E_{ii}(E_{ii}+bE_{jj})E_{ii}=E_{ii},
		\qquad
		(E_{ii}+bE_{jj})E_{ii}(E_{ii}+bE_{jj})=E_{ii},
		\]
		we get $r_b=1$ and $s_b=t_b=0$. Hence $A_b=\diag(1,u_b)$.
		
		Note that for a
		general matrix $M=[m_{ij}] \in M_2(\DD)$, one computes
		\[
		(E_{22}+E_{12})M(E_{22}+E_{12})
		=
		(m_{21}+m_{22})(E_{22}+E_{12}),
		\]
		\[
		(E_{22}+E_{21})M(E_{22}+E_{21})=(m_{12}+m_{22})(E_{22}+E_{21})
		\]
		and hence $RM_2(\DD)R = \DD R$. Since $E_{jj}(bE_{jj})E_{jj}=bE_{jj},$ we have
		\[
		\Phi(bE_{jj})=R\Phi(bE_{jj})R\in RM_2(\DD)R.
		\]
		Thus there is $\rho(b)\in\DD$ such that $\Phi(bE_{jj})=\rho(b)R.$
		On the other hand,
		\[
		E_{jj}(E_{ii}+bE_{jj})E_{jj}=bE_{jj},
		\]
		so $R A_b R=\Phi(bE_{jj})=\rho(b)R.$ Since $A_b=\diag(1,u_b)$, this gives $u_b=\rho(b)$. Finally,
		\[
		(E_{ii}+bE_{jj})E_{jj}(E_{ii}+bE_{jj})=b^2E_{jj}
		\]
		gives $A_bRA_b=\Phi(b^2E_{jj})=\rho(b^2)R$. Using $A_b=\diag(1,\rho(b))$, and considering separately
		$R=E_{22}+E_{12}$ and $R=E_{22}+E_{21}$, we get respectively
		\[
		\begin{bmatrix}
			0&\rho(b)\\
			0&\rho(b)^2
		\end{bmatrix}
		=
		\begin{bmatrix}
			0&\rho(b^2)\\
			0&\rho(b^2)
		\end{bmatrix},
		\qquad
		\begin{bmatrix}
			0&0\\
			\rho(b)&\rho(b)^2
		\end{bmatrix}
		=
		\begin{bmatrix}
			0&0\\
			\rho(b^2)&\rho(b^2)
		\end{bmatrix}.
		\]
		In both cases,
		\[
		\rho(b)=\rho(b^2)=\rho(b)^2.
		\]
		If $b\ne0$, then $\rho(b)\ne0$, since otherwise
		$\Phi(bE_{jj})=0=\Phi(0)$, contradicting injectivity. Hence $\rho(b)=1$ for
		every $b\ne0$. Since $|\DD|>2$, we can choose two distinct nonzero values of
		$b$ with the same image $\Phi(bE_{jj})=R$, again contradicting injectivity.
		
		It follows that the nonorthogonal alternatives are impossible, thus concluding $R=E_{22}$. Hence, 
		\[
		PR=RP=0,\qquad P+R=I_2=Q.
		\]
		Since all preceding reductions were implemented by range isomorphisms and
		similarities, these identities transfer back to the original corner, and the
		proof is complete.
	\end{proof}
	
	\begin{proposition}\label{prop:diagonal-straightening}
		There exists $T\in\GL_n(\DD)$ such that, after replacing $\Phi$ by
		$X\mapsto T\Phi(X)T^{-1}$, one has
		\[
		\Phi(P_S)=P_S, \qquad \text{for all } S\subseteq[n].
		\]
		Consequently
		\begin{equation}\label{eq:compression}
			\Phi(P_SXP_S)=P_S\Phi(X)P_S,
			\qquad \text{for all } X\in\R,
			\ S\subseteq[n].
		\end{equation}
	\end{proposition}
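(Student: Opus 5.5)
We plan to diagonalize the images of the diagonal matrix units simultaneously. Set $Q_i:=\Phi(E_{ii})$ for $i\in[n]$. By Lemma~\ref{lem:normalization}, each $Q_i$ is a rank-one idempotent. By Lemma~\ref{lem:two-point-rigidity}, $Q_iQ_j=0$ for $i\ne j$. Hence $Q_1,\ldots,Q_n$ are pairwise orthogonal rank-one idempotents in $M_n(\DD)$. Their sum $\sum_i Q_i$ is then an idempotent with image $\bigoplus_i\operatorname{Im}Q_i$, which has right dimension $n$, so the sum equals $I$. Choose a nonzero vector $v_i\in\operatorname{Im}Q_i$ for each $i$. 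These vectors form a right $\DD$-basis of $\DD^n$. Let $T$ be the inverse of the matrix with columns $v_1,\ldots,v_n$. Then $TQ_iT^{-1}$ fixes $e_i$ and kills $e_j$ for $j\ne i$, since $Q_iv_j=Q_iQ_jv_j=0$. Thus $TQ_iT^{-1}=E_{ii}$. After this conjugation, which by the standing convention preserves everything in Lemma~\ref{lem:normalization}, we have $\Phi(E_{ii})=E_{ii}$ for all $i$.

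Next we show $\Phi(P_S)=P_S$ for every $S$. For $i\in S$, the identity $E_{ii}=P_SE_{ii}P_S$ gives $E_{ii}=\Phi(P_S)E_{ii}\Phi(P_S)$. For $j\notin S$, the identity $P_SE_{jj}P_S=0$ gives $\Phi(P_S)E_{jj}\Phi(P_S)=0$. Write $F:=\Phi(P_S)$, an idempotent of rank $|S|$. The relation $FE_{ii}F=E_{ii}$ shows that $e_i\in\operatorname{Im}F$ for $i\in S$. Since the rank is $|S|$, $\operatorname{Im}F=\operatorname{span}\{e_i:i\in S\}$, so $FP_S=P_S$. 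For the kernel, $FE_{jj}F=0$ means $(Fe_j)(e_j^{T}F)=0$, so either $Fe_j=0$ or the $j$th row of $F$ is zero. Since $\operatorname{Im}F\subseteq\operatorname{span}\{e_i:i\in S\}$, every row of $F$ indexed outside $S$ is already zero, so this alone does not decide $Fe_j$; we must argue differently.

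Use instead $P_S=P_SP_{S'}P_S$ with $S'=[n]\setminus S$ replaced suitably, or better the sandwich $P_{S'}P_SP_{S'}=0$. This gives $\Phi(P_{S'})F\Phi(P_{S'})=0$. By symmetry $\operatorname{Im}\Phi(P_{S'})=\operatorname{span}\{e_j:j\in S'\}$. It is cleaner to also use $I=\Phi(I)$ and $P_S=IP_SI$, which gives nothing new. So the key step, and the main obstacle, is pinning down the kernel of $F$. We plan to use the identity $E_{jj}P_SE_{jj}=0$, giving $E_{jj}FE_{jj}=0$, so $F_{jj}=0$ for $j\notin S$, together with the dual argument applied to the complement. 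The complementary idempotent $I-F$ has rank $|S'|$. Using $(P_S+E_{jj})$, which by the rank-range argument has image $\operatorname{span}\{e_i:i\in S\cup\{j\}\}$, together with $P_S\trianglelefteq_s P_S+E_{jj}$, does not immediately yield the kernel either.

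The plan is therefore to exploit the relation $F\Phi(P_{S'})=0$. Applying the semi-triple identity to $X=P_S$ and $Y=P_{S'}$ gives $F\,\Phi(P_{S'})\,F=0$. Combined with the known images of $F$ and $\Phi(P_{S'})$, this says that the $S\times S'$ block $B$ of $F$, multiplied by the invertible $S'\times S'$ part of $\Phi(P_{S'})$ (whose rows indexed by $S'$ form its nonzero part), lands in a position that forces $B=0$. I expect this to require a careful, if straightforward, block computation. Alternatively, one can derive $B=0$ from $E_{jj}FE_{jj}=0$ and $E_{ii}FE_{ii}=E_{ii}$ via $(E_{ii}+E_{jj})$-sandwiches, reducing to the $2\times2$ rigidity already proved in Lemma~\ref{lem:two-point-rigidity}.

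Once $\Phi(P_S)=P_S$ is established, \eqref{eq:compression} follows. The identity $P_SXP_S$ is a Jordan triple product, so $\Phi(P_SXP_S)=\Phi(P_S)\Phi(X)\Phi(P_S)=P_S\Phi(X)P_S$.
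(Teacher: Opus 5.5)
\textbf{Verdict: genuine gap.} Your conjugation step (making $\Phi(E_{ii})=E_{ii}$) and the final derivation of \eqref{eq:compression} agree with the paper. However, you never actually prove $\Phi(P_S)=P_S$. After obtaining $\operatorname{Im}F=\operatorname{span}\{e_i:i\in S\}$, you treat the $S\times S'$ block $B$ of $F$ (with $S'=[n]\setminus S$) as an open obstacle and offer two routes, neither carried out.

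Your main route fails as stated. The semi-triple identity gives $F\,\Phi(P_{S'})\,F=0$, not $F\,\Phi(P_{S'})=0$. By the same image argument, $\Phi(P_{S'})$ has zero rows in $S$ and identity $S'\times S'$ block; write $C$ for its $S'\times S$ block. Then $F\Phi(P_{S'})F$ has $S\times S$ block $BC$ and $S\times S'$ block $BCB$. So the relation only says $BC=0$, and it leaves $B$ arbitrary when $C=0$.

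Your second route can be completed, but not from $E_{jj}FE_{jj}=0$ and $E_{ii}FE_{ii}=E_{ii}$ alone, since these never involve $F_{ij}$. You need $\Phi(E_{ii}+E_{jj})=E_{ii}+E_{jj}$ (Lemma~\ref{lem:two-point-rigidity} after your conjugation) together with the sandwich $P_{\{i,j\}}P_SP_{\{i,j\}}=E_{ii}$ for $i\in S$, $j\notin S$. These give $(E_{ii}+E_{jj})F(E_{ii}+E_{jj})=E_{ii}$, hence $F_{ij}=0$.

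In fact you already had everything needed. Your image computation gives $P_SF=F$, and then
$F=F^2=FP_SF=\sum_{i\in S}FE_{ii}F=\sum_{i\in S}E_{ii}=P_S$.
Put differently, $FE_{ii}F=E_{ii}$ and $F^2=F$ give not only $FE_{ii}=E_{ii}$ but also $E_{ii}F=(FE_{ii}F)F=E_{ii}$. So for $i\in S$ the $i$th row of $F$ equals the $i$th row of $I$, which forces $B=0$ at once. The sandwich relation carries both row and column information, and you only extracted the column half.

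This is exactly the paper's argument. Summing over $i\in S$ gives $FP_SF=P_S$, i.e.\ $P_S\trianglelefteq_s F$. Since both idempotents have rank $|S|$, \eqref{eq:rank-observation} forces $F=P_S$.
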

	
	\begin{proof}
		If $i \in [n]$, set $Q_i:=\Phi(E_{ii})$. For $i\ne j$, Lemma~\ref{lem:two-point-rigidity} gives
		\[
		Q_iQ_j=Q_jQ_i=0,
		\qquad
		\Phi(E_{ii}+E_{jj})=Q_i+Q_j.
		\]
		Hence, by Lemma~\ref{lem:normalization}, $Q_1,\ldots,Q_n$ are pairwise orthogonal rank-one idempotents. Therefore $Q_1+\cdots+Q_n$ is an idempotent of rank $n$, and hence equal to $I$.
		
		Choose a nonzero vector in each $\operatorname{Im}Q_i$. These vectors form a basis of the right $\DD$-vector space $\DD^n$, and in this basis the idempotents $Q_i$ become the standard diagonal idempotents $E_{ii}$. After the corresponding range similarity, we may assume
		\[
		\Phi(E_{ii})=E_{ii},\qquad \text{for all }i\in[n],
		\]
		and therefore $\Phi(E_{ii}+E_{jj})=E_{ii}+E_{jj}$ for $i\ne j$.
		
		Let $S\subseteq[n]$ and put $R:=\Phi(P_S)$. By
		Lemma~\ref{lem:normalization}, $R$ is an idempotent of rank $|S|$. For
		$i\in S$, the identity $P_SE_{ii}P_S=E_{ii}$ gives $RE_{ii}R=E_{ii}$.
		Therefore
		\[
		RP_SR
		=
		\sum_{i\in S}RE_{ii}R
		=
		\sum_{i\in S}E_{ii}
		=
		P_S.
		\]
		Thus, using the notation of the proof of Lemma~\ref{lem:normalization}, $P_S\trianglelefteq_s R$. Since $\rank P_S=|S|=\rank R$, the rank observation \eqref{eq:rank-observation} gives $R=P_S$. Formula \eqref{eq:compression} now follows directly from the Jordan semi-triple
		identity.
	\end{proof}
	
	From this point on, we assume that the suitable range similarity has been applied to $\Phi$, and therefore that \eqref{eq:compression} holds. We shall refer to it as the \emph{compression identity}. Following \cite{GT-matrix-jordan-mult,GT-mult}, it is convenient to reformulate it in terms of supports. 
	Namely, the compression identity says that support in a diagonal block
	is preserved by $\Phi$: if $X\in\R$ is supported in $S\times S$, where $S \subseteq [n]$, then $\Phi(X)$ is again supported in $S\times S$. We shall use this
	support-preservation form of \eqref{eq:compression} without further comment.
	
	\begin{lemma}\label{lem:two-point-compressions}
		Let $X\in\R$, let $i,j\in[n]$ be distinct, and write
		$P:=P_{\{i,j\}}$. Then
		\begin{align*}
			\Phi(X)_{ii} &= \Phi(PXP)_{ii}, &
			\Phi(X)_{ij} &= \Phi(PXP)_{ij}, \\
			\Phi(X)_{ji} &= \Phi(PXP)_{ji}, &
			\Phi(X)_{jj} &= \Phi(PXP)_{jj}.
		\end{align*}
		In particular, if $X$ is supported in $R\times R$ for some
		$R\subseteq[n]$ with $|R|\ge2$, then $\Phi(X)$ is recovered entrywise from
		the two-point compressed images
		\[
		\Phi(P_{\{i,j\}}XP_{\{i,j\}}),
		\qquad i,j\in R,\quad i\ne j.
		\]
	\end{lemma}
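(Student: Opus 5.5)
This is a direct consequence of the compression identity \eqref{eq:compression} from Proposition~\ref{prop:diagonal-straightening}, applied with $S=\{i,j\}$. Put $P:=P_{\{i,j\}}$. Since $P\in\R$ (it is diagonal), \eqref{eq:compression} gives
\[
\Phi(PXP)=P\,\Phi(X)\,P .
\]
For any matrix $M\in M_n(\DD)$, the product $PMP$ keeps exactly the entries of $M$ in positions $(p,q)$ with $p,q\in\{i,j\}$ and sets all other entries to zero. Reading off the four entries of $P\Phi(X)P$ at positions $(i,i)$, $(i,j)$, $(j,i)$ and $(j,j)$ therefore yields the four displayed equalities.

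For the final assertion, let $X$ be supported in $R\times R$ with $|R|\ge2$. By the support-preservation form of \eqref{eq:compression} (take $S=R$), $\Phi(X)$ is also supported in $R\times R$, so every entry outside $R\times R$ is zero. Every position $(p,q)$ with $p,q\in R$ lies in $\{i,j\}\times\{i,j\}$ for some pair of distinct $i,j\in R$. For $p\neq q$ take $\{i,j\}=\{p,q\}$. For $p=q$ choose any $j\in R\setminus\{p\}$, which exists because $|R|\ge2$. The first part then expresses each such entry of $\Phi(X)$ as an entry of $\Phi(P_{\{i,j\}}XP_{\{i,j\}})$, so $\Phi(X)$ is recovered entrywise from these two-point compressed images.

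The argument has no real obstacle. The only points needing attention are that $P_{\{i,j\}}$ must lie in $\R$ for \eqref{eq:compression} to apply, which holds because $\R$ contains all diagonal matrices, and that the diagonal entries require $|R|\ge2$ so that a second index $j$ can be chosen.
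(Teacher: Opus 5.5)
Your proposal is correct and follows essentially the same route as the paper: you apply the compression identity with $S=\{i,j\}$ to read off the four entries, then combine support preservation on $R\times R$ with the choice of a second index $j\in R$ (possible since $|R|\ge2$) to recover the diagonal entries. The remark that $P_{\{i,j\}}$ and $PXP$ lie in $\R$ is a harmless extra check, already built into the compression identity as stated.
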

	
	\begin{proof}
		The compression identity \eqref{eq:compression} applied to $P=P_{\{i,j\}}$ yields      $\Phi(PXP)=P\Phi(X)P$. This equality directly implies
		\[
		\Phi(X)_{pq}=\Phi(PXP)_{pq},
		\qquad p,q\in\{i,j\}.
		\]
		Now suppose that $X$ is supported in $R\times R$, where $|R|\ge2$. By the
		support-preservation form of \eqref{eq:compression}, $\Phi(X)$ is also
		supported in $R\times R$. Every off-diagonal entry $\Phi(X)_{ij}$ with
		$i,j\in R$, $i\ne j$, occurs in the compression to $\{i,j\}$, and every
		diagonal entry $\Phi(X)_{ii}$ with $i\in R$ occurs in any compression to
		$\{i,j\}$ with $j\in R$, $j\ne i$. Since all entries outside $R\times R$
		are zero, these two-point compressed images recover $\Phi(X)$ entrywise.
	\end{proof}
	
	Although Lemma~\ref{lem:two-point-compressions} is stated for arbitrary $R$ with $|R|\ge2$, it will be
	used only for two- and three-point corners.
	
	\subsection{Local coordinates before additivity}\label{sec:local}
	
	In this subsection, we continue to assume that $\Phi$ is normalized and acts as the identity on all diagonal idempotents (Proposition~\ref{prop:diagonal-straightening}).
	
	\begin{lemma}\label{lem:coordinate-setup}
		For each $i\in [n]$ there exists an injective map $F_{ii}:\DD\to\DD$ with $F_{ii}(0)=0$ and $F_{ii}(1)=1$ such that
		\begin{equation}\label{eq:diag-coordinate}
			\Phi(aE_{ii})=F_{ii}(a)E_{ii}, \qquad a \in \DD.
		\end{equation}
		Moreover, for $i\ne j$,
		\begin{equation}\label{eq:two-diagonal-coordinate}
			\Phi(aE_{ii}+bE_{jj})=F_{ii}(a)E_{ii}+F_{jj}(b)E_{jj}, \qquad a,b \in \DD
		\end{equation}
		and hence
		\begin{equation}\label{eq:all-diagonal-coordinate}
			\Phi\left(\sum_{i=1}^n a_iE_{ii}\right)=\sum_{i=1}^nF_{ii}(a_i)E_{ii}, \qquad a_1, \ldots, a_n \in \DD.
		\end{equation}
		If $i\prec j$, then exactly one of the alternatives
		\begin{equation}\label{eq:pure-preserving}
			\Phi(aE_{ij})=F_{ij}(a)E_{ij}, \qquad a\in\DD,
		\end{equation}
		\begin{equation}\label{eq:pure-reversing}
			\Phi(aE_{ij})=F_{ij}(a)E_{ji}, \qquad a\in\DD,
		\end{equation}
		holds, where $F_{ij}:\DD\to\DD$ is injective and $F_{ij}(0)=0$. In the first case,
		\begin{equation}\label{eq:sandwich-preserving}
			F_{ij}(at)=F_{ii}(a)F_{ij}(t), 
			\qquad
			F_{ij}(ta)=F_{ij}(t)F_{jj}(a), \qquad a,t \in \DD,
		\end{equation}
		and in the second case,
		\begin{equation}\label{eq:sandwich-reversing}
			F_{ij}(at)=F_{ij}(t)F_{ii}(a),
			\qquad
			F_{ij}(ta)=F_{jj}(a)F_{ij}(t),  \qquad a,t \in \DD.
		\end{equation}
	\end{lemma}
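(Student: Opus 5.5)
Everything here comes from the compression identity \eqref{eq:compression}, the normalization $\Phi(E_{ii})=E_{ii}$, and injectivity; we argue coordinate by coordinate.

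\emph{Diagonal coordinates.} Since $aE_{ii}$ is supported in $\{i\}\times\{i\}$, support preservation gives $\Phi(aE_{ii})=F_{ii}(a)E_{ii}$ for a unique $F_{ii}(a)\in\DD$. Injectivity of $F_{ii}$ follows from injectivity of $\Phi$, and $F_{ii}(0)=0$, $F_{ii}(1)=1$ follow from $\Phi(0)=0$ and $\Phi(E_{ii})=E_{ii}$.

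For \eqref{eq:two-diagonal-coordinate}, put $X=aE_{ii}+bE_{jj}$. Compressing by $P_{\{i\}}$ and $P_{\{j\}}$ gives the $(i,i)$ and $(j,j)$ entries of $\Phi(X)$. Since $X$ is supported in $\{i,j\}^2$, so is $\Phi(X)$, and only the off-diagonal entries remain to be shown zero. Use
\[
E_{ii}XE_{ii}=aE_{ii},\qquad E_{ii}XE_{jj}\ \text{(not of sandwich form)},
\]
together with $X E_{ii} X = a^2E_{ii}$ and the identity $XE_{ii}X=\Phi(X)E_{ii}\Phi(X)$. Writing $\Phi(X)=\begin{bmatrix}F_{ii}(a)&s\\ t&F_{jj}(b)\end{bmatrix}$, the right-hand side $\Phi(X)E_{ii}\Phi(X)$ has $(j,j)$ entry $tF_{ii}(a)s$... wait, more precisely its $(j,j)$ entry is $ts$-type. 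Comparing it with $F_{ii}(a^2)E_{ii}$ forces $tF_{ii}(a)=0$ and $F_{ii}(a)s=0$. Symmetrically, $XE_{jj}X=b^2E_{jj}$ gives $sF_{jj}(b)=0$ and $F_{jj}(b)t=0$. So $s=t=0$ unless $a=b=0$, in which case $X=0$.

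For the $n$-fold diagonal case \eqref{eq:all-diagonal-coordinate}, Lemma~\ref{lem:two-point-compressions} recovers all entries of $\Phi(\sum_k a_kE_{kk})$ from the two-point compressions, which are the two-diagonal case just proved. The case $n=1$ (a single index) is trivial.

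\emph{Off-diagonal coordinate for $i\prec j$.} Set $P=P_{\{i,j\}}$ and $A=\Phi(aE_{ij})$, which is supported in $\{i,j\}^2$. From
\[
E_{ii}(aE_{ij})E_{ii}=0,\qquad E_{jj}(aE_{ij})E_{jj}=0
\]
we get that $A$ has zero diagonal, so $A=sE_{ij}+tE_{ji}$. Next, $(aE_{ij})Y(aE_{ij})=a y_{ji}aE_{ij}$, and for $Y=I$ this is $0$ (with $i\ne j$), so $AIA=A^2=0$. This gives $st=0=ts$, hence $s=0$ or $t=0$. So each $\Phi(aE_{ij})$, for $a\neq0$, is a nonzero multiple of exactly one of $E_{ij}$, $E_{ji}$.

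The main obstacle is to show that this type is the same for all $a\neq0$. I would use the sandwich
\[
(E_{ii}+aE_{ij})\,E_{jj}\,(E_{ii}+aE_{ij})=0\ (\text{since } E_{ii}E_{jj}=0 \text{ and } E_{ij}E_{jj}E_{ii}=0),
\]
or rather the relations
\[
(aE_{ii}+E_{jj})(tE_{ij})(aE_{ii}+E_{jj})=atE_{ij},\qquad (E_{ii}+cE_{jj})(tE_{ij})(E_{ii}+cE_{jj})=tcE_{ij}.
\]
Together with \eqref{eq:two-diagonal-coordinate}, these yield: if $\Phi(tE_{ij})=sE_{ij}$, then $\Phi(atE_{ij})=F_{ii}(a)\,s\,E_{ij}$, which is of the same type; if instead $\Phi(tE_{ij})$ is a multiple of $E_{ji}$, then $\Phi(atE_{ij})=s\,F_{ii}(a)\,E_{ji}$. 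Choosing $t=1$ and letting $a$ range over $\DD$, every $\Phi(aE_{ij})$ has the type of $\Phi(E_{ij})$. This settles the dichotomy and defines $F_{ij}$ directly. Injectivity of $F_{ij}$ and $F_{ij}(0)=0$ are inherited from $\Phi$.

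\emph{Multiplicativity relations.} The same two displayed identities, now with general $t$ and in each type, give exactly \eqref{eq:sandwich-preserving} and \eqref{eq:sandwich-reversing}. In the reversing case the factor order flips because $E_{jj}(sE_{ji})E_{ii}$-type products place the diagonal scalar on the other side.
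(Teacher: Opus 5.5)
Your proposal is correct and follows essentially the same route as the paper. It uses compression for the diagonal coordinates, the sandwiches $XE_{ii}X$ and $XE_{jj}X$ to kill the off-diagonal entries of $\Phi(aE_{ii}+bE_{jj})$, and the sandwiches by $aE_{ii}+E_{jj}$ and $E_{ii}+cE_{jj}$ to fix the orientation of $\Phi(aE_{ij})$ and derive both coefficient identities. The differences are only cosmetic: you obtain $\Phi(aE_{ij})^2=0$ from $Y=I$ rather than $Y=E_{ii}$, and you propagate the orientation from $t=1$ rather than between two arbitrary nonzero scalars. In a final write-up, remove the exploratory remarks (the ``wait'' and the abandoned first sandwich).
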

	
	\begin{proof}
		Fix $i \in [n]$. For $a \in \DD$, the compression identity \eqref{eq:compression} for $S=\{i\}$ implies that $\Phi(aE_{ii})$ is supported in $\{i\} \times \{i\}$. Therefore, there exists a map $F_{ii}: \DD \to \DD$ such that \eqref{eq:diag-coordinate} holds. The injectivity of $F_{ii}$ follows from the injectivity of $\Phi$. Similarly, for $a,b\in\DD$ and $i\ne j$, applying \eqref{eq:compression} with $S=\{i,j\}$ gives
		\[
		\Phi(aE_{ii}+bE_{jj})=
		d_iE_{ii}+d_jE_{jj}+uE_{ij}+vE_{ji},
		\]
		for some $d_i,d_j,u,v \in \DD$.
		Comparing entries in
		\[
		\Phi(XYX)=\Phi(X)\Phi(Y)\Phi(X),
		\qquad
		\Phi(YXY)=\Phi(Y)\Phi(X)\Phi(Y),
		\]
		where $X=aE_{ii}+bE_{jj}$ and $Y \in \{E_{ii},E_{jj}\}$, gives $d_i = F_{ii}(a)$, $d_j = F_{jj}(b)$, and
		\[
		d_iu=0,
		\qquad vd_i=0,
		\qquad
		ud_j=0,
		\qquad d_jv=0.
		\]
		If $a\ne0$, then $d_i\ne0$ by the injectivity of $F_{ii}$ and $F_{ii}(0)=0$, and the first two equations force $u=v=0$; if $b\ne0$, the last two equations do the same. The case $a=b=0$ follows from $\Phi(0)=0$. This proves \eqref{eq:two-diagonal-coordinate}.
		
		To prove \eqref{eq:all-diagonal-coordinate}, let $D = \diag(a_1,\ldots,a_n) \in M_n(\DD)$ be an arbitrary diagonal matrix. For fixed $i \ne j$, applying \eqref{eq:compression} to the two-point set $\{i,j\}$ gives
		\[
		P_{\{i,j\}}DP_{\{i,j\}} = a_iE_{ii}+a_jE_{jj} \implies P_{\{i,j\}}\Phi(D)P_{\{i,j\}} \stackrel{\eqref{eq:two-diagonal-coordinate}}= F_{ii}(a_i)E_{ii} + F_{jj}(a_j)E_{jj}.
		\]
		Lemma~\ref{lem:two-point-compressions} yields
		\[
		\Phi(D)_{ii} = F_{ii}(a_i), \qquad \Phi(D)_{jj} = F_{jj}(a_j), \qquad \Phi(D)_{ij} = \Phi(D)_{ji} = 0.
		\]
		Since $i$ and $j$ were arbitrary, \eqref{eq:all-diagonal-coordinate} follows.
		
		Now fix $i\prec j$ and let $a \in \DD$. Compression to $\{i,j\}$ and the identities $E_{ii}(aE_{ij})E_{ii}=E_{jj}(aE_{ij})E_{jj}=0$ give
		\[
		\Phi(aE_{ij})=u(a)E_{ij}+v(a)E_{ji},
		\]
		for some $u(a),v(a) \in \DD$. Since $(aE_{ij})E_{ii}(aE_{ij})=0$, we get $v(a)u(a)=0$. For $a\ne0$, injectivity gives $\Phi(aE_{ij})\ne0$, so exactly one of $u(a),v(a)$ is nonzero; for $a=0$, both are zero.
		
		For $r,s\in\DD^\times$,
		\[
		(rE_{ii}+sE_{jj})(aE_{ij})(rE_{ii}+sE_{jj})=rasE_{ij}.
		\]
		The image of $rE_{ii}+sE_{jj}$ has two nonzero diagonal coefficients. Thus the orientation of $\Phi(aE_{ij})$ is preserved under replacing $a$ by $ras$, namely
		\[
		u(a) \ne 0 \iff u(ras) \ne 0 \qquad \text{and} \qquad v(a) \ne 0 \iff v(ras) \ne 0.
		\]
		Given $a,b\ne0$, choose $r:=1$ and $s:=a^{-1}b$. Hence the orientation is independent of $a\ne0$, and the corresponding nonzero coefficient function, denoted by \[
		F_{ij} : \DD^\times \to \DD^\times, \qquad F_{ij}(x) := \begin{cases}
			u(x), &\text{ if }u(x) \ne 0,\\
			v(x), &\text{ if }v(x) \ne 0,
		\end{cases}
		\] and extended by $F_{ij}(0):=0$, is injective. Finally, applying $\Phi$ to
		\[
		(aE_{ii}+E_{jj})(tE_{ij})(aE_{ii}+E_{jj})=atE_{ij},
		\qquad
		(E_{ii}+aE_{jj})(tE_{ij})(E_{ii}+aE_{jj})=taE_{ij}
		\]
		gives \eqref{eq:sandwich-preserving} and \eqref{eq:sandwich-reversing}, with the displayed coefficient order. The cases $a=0$ or $t=0$ are included because all coordinate maps involved send $0$ to $0$.
	\end{proof}
	
	For $i \preceq j$, when \eqref{eq:pure-preserving} holds we say that the pair $(i,j)$ is \emph{$\Phi$-preserving}; when \eqref{eq:pure-reversing} holds we say that the pair $(i,j)$ is \emph{$\Phi$-reversing}. For an off-diagonal pair $p\prec q$, write
	\begin{equation}\label{eq:Upq-definition}
		U_{pq}:=
		\begin{cases}
			E_{pq}, & \text{if }(p,q)\text{ is $\Phi$-preserving},\\
			E_{qp}, & \text{if }(p,q)\text{ is $\Phi$-reversing}.
		\end{cases}
	\end{equation}
	Thus \eqref{eq:pure-preserving} and \eqref{eq:pure-reversing} can be together stated as
	\begin{equation}\label{eq:pure-unified}
		\Phi(tE_{pq})=F_{pq}(t)U_{pq},\qquad t\in\DD.
	\end{equation}
	
	\begin{lemma}\label{lem:chain-orientation}
		Let $p,q,r,s\in[n]$ satisfy
		\[
		p\ne q,\qquad r\ne s,\qquad
		\{p,q\}\ne\{r,s\},\qquad
		|\{p,q,r,s\}|=3,
		\]
		and assume that $p\preceq q$ and $r\preceq s$. Then, for all
		$a,b\in\DD$,
		\begin{equation}\label{eq:pqrs-formula}
			\Phi(a E_{pq}+b E_{rs})
			=
			F_{pq}(a)U_{pq}+F_{rs}(b)U_{rs}.
		\end{equation}
		Moreover, if $i,j,k$ are distinct and $i\preceq j\preceq k$, then
		$(i,j)$, $(j,k)$, and $(i,k)$ have the same $\Phi$-orientation. In the
		$\Phi$-preserving case,
		\begin{equation}\label{eq:chain-coeff-preserving}
			F_{ik}(ab)=F_{ij}(a)F_{jk}(b),\qquad a,b \in \DD,
		\end{equation}
		and in the $\Phi$-reversing case,
		\begin{equation}\label{eq:chain-coeff-reversing}
			F_{ik}(ab)=F_{jk}(b)F_{ij}(a),\qquad a,b \in \DD.
		\end{equation}
	\end{lemma}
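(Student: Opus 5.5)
We first prove \eqref{eq:pqrs-formula}. Put $X:=aE_{pq}+bE_{rs}$ and let $R:=\{p,q,r,s\}$, a three-point set. Since $X$ is supported in $R\times R$, the compression identity \eqref{eq:compression} shows that $\Phi(X)$ is supported in $R\times R$. By Lemma~\ref{lem:two-point-compressions}, $\Phi(X)$ is determined entrywise by the two-point compressions $\Phi(P_{\{u,v\}}XP_{\{u,v\}})$ for $u\ne v$ in $R$. Exactly one of the three two-point sets in $R$ equals $\{p,q\}$, one equals $\{r,s\}$, and the third is neither. The compression of $X$ to $\{p,q\}$ is $aE_{pq}$, whose image is $F_{pq}(a)U_{pq}$ by \eqref{eq:pure-unified}. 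Similarly, the compression to $\{r,s\}$ has image $F_{rs}(b)U_{rs}$. The compression to the third pair is $0$, whose image is $0$ by Lemma~\ref{lem:normalization}.

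Every diagonal entry of $\Phi(X)$ can be read off from a compression containing that index. These diagonal entries all vanish: the images $F_{pq}(a)U_{pq}$ and $F_{rs}(b)U_{rs}$ are off-diagonal, and each index lies in at least one of the two-point sets. The compressions are consistent, since each diagonal entry equals $0$ in every compression containing it. Assembling the entries gives exactly $F_{pq}(a)U_{pq}+F_{rs}(b)U_{rs}$.

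For the chain statement, take distinct $i\preceq j\preceq k$ and use the identity from the introduction,
\[
(aE_{ij}+bE_{jk})E_{jj}(aE_{ij}+bE_{jk})=abE_{ik}.
\]
The pairs $\{i,j\},\{j,k\}$ satisfy the hypotheses of \eqref{eq:pqrs-formula}. Applying $\Phi$ and using $\Phi(E_{jj})=E_{jj}$ gives
\[
F_{ik}(ab)U_{ik}=\bigl(F_{ij}(a)U_{ij}+F_{jk}(b)U_{jk}\bigr)E_{jj}\bigl(F_{ij}(a)U_{ij}+F_{jk}(b)U_{jk}\bigr).
\]
Fix $a,b\ne0$, so the left side is nonzero by injectivity of $F_{ik}$. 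We then go through the four orientation choices for $(U_{ij},U_{jk})$.

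\begin{itemize}
\item If both are preserving, the right side is $F_{ij}(a)F_{jk}(b)E_{ik}$.
\item If both are reversing, with $U_{ij}=E_{ji}$ and $U_{jk}=E_{kj}$, then $E_{jj}$ sits on the left of $E_{ji}$ and on the right of $E_{kj}$. The surviving product is $F_{jk}(b)E_{kj}E_{jj}F_{ij}(a)E_{ji}=F_{jk}(b)F_{ij}(a)E_{ki}$.
\item In the mixed case $U_{ij}=E_{ij}$, $U_{jk}=E_{kj}$, every product has $E_{jj}$ sandwiched incompatibly. Terms of the form $E_{ij}E_{jj}E_{ij}$ and $E_{kj}E_{jj}E_{kj}$ vanish, and so do the cross terms $E_{ij}E_{jj}E_{kj}$ and $E_{kj}E_{jj}E_{ij}$. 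The right side is therefore $0$, a contradiction.
\item The other mixed case, $U_{ij}=E_{ji}$, $U_{jk}=E_{jk}$, gives only terms like $E_{ji}E_{jj}$ and $E_{jk}E_{jj}$, which are $0$. Again this is a contradiction.
\end{itemize}

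Hence $(i,j)$ and $(j,k)$ share an orientation. Comparing supports then forces $U_{ik}=E_{ik}$ in the preserving case and $U_{ik}=E_{ki}$ in the reversing case, so $(i,k)$ has the same orientation. The coefficient comparisons yield \eqref{eq:chain-coeff-preserving} and \eqref{eq:chain-coeff-reversing}. When $a=0$ or $b=0$, both sides vanish because every $F$ sends $0$ to $0$.

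The main obstacle is the bookkeeping of the four orientation cases, in particular getting the coefficient order right in the reversing case. The first part needs care only to confirm that the three pairwise compressions are consistent on the diagonal.
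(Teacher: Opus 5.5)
Your proof is correct and follows essentially the same route as the paper. For the first part you compress $\Phi(aE_{pq}+bE_{rs})$ to the three two-point corners of $R$, and for the second you apply $\Phi$ to $NE_{jj}N=abE_{ik}$ and use the nonvanishing of $F_{ik}(ab)$ to rule out opposite orientations; the only difference is that you write out the four orientation cases explicitly, where the paper dismisses the mixed ones in a single sentence.
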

	
	\begin{proof}
		We first prove \eqref{eq:pqrs-formula}. For fixed $a, b \in \DD$, set
		\[
		R:=\{p,q,r,s\},
		\qquad
		X:=a E_{pq}+b E_{rs}.
		\]
		Since $X=P_RXP_R$, the support-preservation form of the compression identity \eqref{eq:compression}
		gives that $\Phi(X)$ is supported in $R\times R$. Also,
		\[
		P_{\{p,q\}}XP_{\{p,q\}}=a E_{pq},
		\qquad
		P_{\{r,s\}}XP_{\{r,s\}}=b E_{rs}.
		\]
		Hence, \eqref{eq:pure-unified} gives
		\[
		P_{\{p,q\}}\Phi(X)P_{\{p,q\}}
		=
		\Phi(a E_{pq})
		=
		F_{pq}(a)U_{pq},
		\]
		and similarly
		\[
		P_{\{r,s\}}\Phi(X)P_{\{r,s\}}
		=
		\Phi(b E_{rs})
		=
		F_{rs}(b)U_{rs}.
		\]
		For the remaining two-point subset $T\subset R$, distinct from
		$\{p,q\}$ and $\{r,s\}$, we have $P_TXP_T=0$, and therefore $P_T\Phi(X)P_T=\Phi(0)=0$. Since $\Phi(X)$ is supported in $R\times R$, by Lemma~\ref{lem:two-point-compressions}, these three two-point compressions determine all its entries. Thus
		\begin{equation*}
			\Phi(X)=F_{pq}(a)U_{pq}+F_{rs}(b)U_{rs},
		\end{equation*}
		which establishes \eqref{eq:pqrs-formula}. Now let $i,j,k$ be distinct with $i\preceq j\preceq k$. First take
		$a,b\in\DD^\times$, and set
		\[
		N:=aE_{ij}+bE_{jk}.
		\]
		By \eqref{eq:pqrs-formula},
		\[
		\Phi(N)=F_{ij}(a)U_{ij}+F_{jk}(b)U_{jk}.
		\]
		Since $NE_{jj}N=abE_{ik}$, we have
		\[
		\Phi(N)E_{jj}\Phi(N)=\Phi(abE_{ik}).
		\]
		The right-hand side is nonzero, because $ab\ne0$, $\Phi(0)=0$, and
		$\Phi$ is injective.
		
		If $(i,j)$ and $(j,k)$ had opposite $\Phi$-orientations, then the above
		formula would give $\Phi(N)E_{jj}\Phi(N)=0$, a contradiction. Hence $(i,j)$ and $(j,k)$ have the same $\Phi$-orientation. If both are $\Phi$-preserving, then
		\[
		\Phi(N)=F_{ij}(a)E_{ij}+F_{jk}(b)E_{jk},
		\]
		and consequently
		\[
		\Phi(abE_{ik})
		=
		\Phi(N)E_{jj}\Phi(N)
		=
		F_{ij}(a)F_{jk}(b)E_{ik}.
		\]
		Thus $(i,k)$ is $\Phi$-preserving and $F_{ik}(ab)=F_{ij}(a)F_{jk}(b)$ follows. If both are $\Phi$-reversing, then
		\[
		\Phi(N)=F_{ij}(a)E_{ji}+F_{jk}(b)E_{kj},
		\]
		and hence
		\[
		\Phi(abE_{ik})
		=
		\Phi(N)E_{jj}\Phi(N)
		=
		F_{jk}(b)F_{ij}(a)E_{ki}.
		\]
		Thus $(i,k)$ is $\Phi$-reversing and $F_{ik}(ab)=F_{jk}(b)F_{ij}(a)$ follows.
		
		This proves the orientation assertion and the coefficient identities \eqref{eq:chain-coeff-preserving} and \eqref{eq:chain-coeff-reversing} for
		$a,b\in\DD^\times$. If $a=0$ or $b=0$, then $ab=0$, and the identities follow from $F_{uv}(0)=0$.
	\end{proof}
	
	\begin{proposition}\label{prop:nonisolated-two-point}
		Let $i\ne j$. Suppose  that $i\preceq j$, $j\not\preceq i$, and $(i,j)$ is nonisolated.
		Let $a,b,\alpha\in\DD$ be arbitrary. Then $\Phi(aE_{ii}+bE_{jj}+\alpha E_{ij})$ equals  
		\[
		\begin{cases}
			F_{ii}(a)E_{ii}+F_{jj}(b)E_{jj}
			+F_{ij}(\alpha)E_{ij}, &\text{ if $(i,j)$ is $\Phi$-preserving},
			\\
			F_{ii}(a)E_{ii}+F_{jj}(b)E_{jj}
			+F_{ij}(\alpha)E_{ji}, &\text{ if $(i,j)$ is $\Phi$-reversing}.
		\end{cases}
		\]
	\end{proposition}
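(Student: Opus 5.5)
The plan is to first locate the unknown entries and then pin them down in two stages. Put $X:=aE_{ii}+bE_{jj}+\alpha E_{ij}$. Since $X=P_{\{i,j\}}XP_{\{i,j\}}$, the compression identity \eqref{eq:compression} shows that $\Phi(X)$ is supported in $\{i,j\}\times\{i,j\}$. Compressing further to $\{i\}$ and to $\{j\}$ and using \eqref{eq:diag-coordinate} gives $\Phi(X)_{ii}=F_{ii}(a)$ and $\Phi(X)_{jj}=F_{jj}(b)$. So only two entries are unknown: $u:=\Phi(X)_{ij}$ and $v:=\Phi(X)_{ji}$. I treat the $\Phi$-preserving case; the reversing case is symmetric, with the roles of $u$ and $v$ exchanged and the reversed coefficient order of \eqref{eq:sandwich-reversing} and \eqref{eq:chain-coeff-reversing}. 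If $\alpha=0$ the claim is \eqref{eq:two-diagonal-coordinate}, and if $a=b=0$ it is \eqref{eq:pure-unified}.

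\emph{Stage 1: reduce to $a=0$ or $b=0$.} Suppose the statement is known for elements $b'E_{jj}+\alpha'E_{ij}$ and $a'E_{ii}+\alpha'E_{ij}$. Let $b\ne0$. The column-$j$ times row-$j$ computation gives
\[
XE_{jj}X=b^2E_{jj}+\alpha bE_{ij}.
\]
Hence $\Phi(X)E_{jj}\Phi(X)$ equals the known image of $b^2E_{jj}+\alpha bE_{ij}$. Comparing entries yields $uv=0$ and $F_{jj}(b)v=0$, so $v=0$ because $F_{jj}(b)\ne0$. It also yields $uF_{jj}(b)=F_{ij}(\alpha b)=F_{ij}(\alpha)F_{jj}(b)$ by \eqref{eq:sandwich-preserving}, so $u=F_{ij}(\alpha)$. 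If $b=0$ but $a\ne0$, the same argument with $XE_{ii}X=a^2E_{ii}+a\alpha E_{ij}$ and the first identity of \eqref{eq:sandwich-preserving} applies.

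\emph{Stage 2: the degenerate cases, where nonisolation enters.} Take $a=0$, $\alpha\ne0$, and suppose first that there is upper-endpoint nonisolation, i.e.\ some $k\notin\{i,j\}$ with $j\preceq k$. By Lemma~\ref{lem:chain-orientation}, the pairs $(j,k)$ and $(i,k)$ are then $\Phi$-preserving, and $F_{ik}(\alpha)=F_{ij}(\alpha)F_{jk}(1)$. Put $S:=E_{ii}+E_{jk}$. Its image $\Phi(S)=E_{ii}+F_{jk}(1)E_{jk}$ is obtained exactly as in the proof of \eqref{eq:pqrs-formula}: $\Phi(S)$ is supported in the three-point corner $\{i,j,k\}$, its three two-point compressions are $\Phi(E_{ii})$, $\Phi(E_{jk})$ and $\Phi(E_{ii})$, and Lemma~\ref{lem:two-point-compressions} then recovers it entrywise. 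Now use two sandwich identities:
\[
SXS=\alpha E_{ik},
\qquad
XSX=0.
\]
From the first, the $(i,k)$-entry of $\Phi(S)\Phi(X)\Phi(S)$ is $uF_{jk}(1)$, and it must equal $F_{ik}(\alpha)=F_{ij}(\alpha)F_{jk}(1)$; hence $u=F_{ij}(\alpha)$. From the second, the $(j,j)$-entry of $\Phi(X)\Phi(S)\Phi(X)$ is $vu$, which must vanish, and $u\ne0$ forces $v=0$. Lower-endpoint nonisolation ($k\preceq i$) and middle nonisolation ($i\preceq k\preceq j$) are handled the same way, with $S$ chosen as a diagonal unit plus a suitable off-diagonal unit ($E_{ki}$, or $E_{ik}$ and $E_{kj}$) so that $SXS$ or $XSX$ isolates $\alpha$ in a position of the form $(k,j)$ or $(i,k)$ controlled by Lemma~\ref{lem:chain-orientation}. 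The case $b=0$ is symmetric.

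I expect the main obstacle to be Stage 2. In each of the three nonisolation configurations, and in both orientations, one has to find an auxiliary $S$ whose image is already determined by the compression arguments and such that $SXS$ is a single known matrix unit while $XSX$ is zero or known. Only then can $u$ be read off and $v$ killed without circularity, since we must not invoke the still-unknown image of a mixed diagonal-plus-off-diagonal element on another pair. I would try $S$ of the form ``a diagonal idempotent plus one off-diagonal unit at the third index'' first, and verify case by case that the unwanted terms vanish because of the zero rows and columns of $X$ outside $\{i,j\}$.
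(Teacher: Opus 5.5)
Your Stage~1 is correct, and so is your computation for $a=0$ with an upper witness via $S=E_{ii}+E_{jk}$, in both orientations. The gap is the claim that the remaining configurations go ``the same way''.

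First, $XSX$ never helps. Since $X=PXP$ with $P=P_{\{i,j\}}$, the compression identity gives $\Phi(X)\Phi(S)\Phi(X)=\Phi(X)\Phi(PSP)\Phi(X)$. This is a relation inside the $\{i,j\}$-corner, and such relations cannot in general tie mixed elements to the pure coordinate $F_{ij}$ (Proposition~\ref{prop:bad-interval}). So everything rests on $\Phi(S)\Phi(X)\Phi(S)$ with both $\Phi(S)$ and $\Phi(SXS)$ already computable. For a middle witness this is impossible. Suppose $(i,j)$ has only a middle witness, e.g.\ $(1,3)$ in $T_3(\DD)$. Then column $i$ and row $j$ of any $S\in\R$ contain only $s_{ii}$ and $s_{jj}$, so $\alpha$ can never be moved to a position $(i,k)$ or $(k,j)$. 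For $u$ to occur in $\Phi(S)\Phi(X)\Phi(S)$ you need $\Phi(S)$ nonzero in column $i$ and in row $j$; in the preserving case this means $s_{ii}\ne0\ne s_{jj}$. Then $P_{\{i,j\}}(SXS)P_{\{i,j\}}$ has nonzero diagonal part $s_{ii}as_{ii}E_{ii}+s_{jj}bs_{jj}E_{jj}$ and $E_{ij}$-coefficient $s_{ii}as_{ij}+s_{ii}\alpha s_{jj}+s_{ij}bs_{jj}$. So either $SXS$ is a mixed element of exactly the unknown type, or $s_{ij}\ne0$ and $S$ itself has an unknown mixed $\{i,j\}$-compression.

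The endpoint cases have the same defect in one of the two degenerate cases that Stage~1 needs. With only an upper witness and $b=0$, your $S$ gives $SXS=aE_{ii}+\alpha E_{ik}$, which is mixed on $(i,k)$. The alternative $S=E_{jj}+E_{ik}$ gives $SXS=0$, which kills $v$ but leaves $u$ free. Symmetrically, $a=0$ fails with only a lower witness, since $S=E_{jj}+E_{ki}$ gives $SXS=bE_{jj}+\alpha E_{kj}$.

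The missing idea is to make the unknown element an outer factor, or the output, of a sandwich. For an upper witness the paper takes $A:=X+E_{kk}$. By the two-point compressions, $\Phi(A)=\Phi(X)+E_{kk}$, and the $E_{kk}$ makes row $k$ of $\Phi(A)$ nonzero. Hence $AE_{jk}A=\alpha E_{ik}+bE_{jk}$ and $AE_{ik}A=aE_{ik}$ determine $u$ and $v$ for all $a,b$ at once; $AE_{ki}A$ and $AE_{kj}A$ do the same for a lower witness. For a middle witness with $k\not\preceq i$ and $j\not\preceq k$, the mixed element is produced as an output:
\[
P_{\{i,j\}}(ABA)P_{\{i,j\}}=aE_{ii}+bE_{jj}+rsE_{ij},\qquad A=E_{ii}+E_{jj}+rE_{ik}+sE_{kj},\quad B=aE_{ii}+E_{kk}+bE_{jj},
\]
with $rs=\alpha$. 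Here $\Phi(A)$ is known because the endpoint cases have already been proved for the pairs $(i,k)$ (upper witness $j$) and $(k,j)$ (lower witness $i$). So using mixed images on other pairs is not circular once the cases are ordered, contrary to your worry.

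Within your own framework, the endpoint gap can also be closed inside the corner. Once every element with $b\ne0$ is known, use $Y(aE_{ii})Y=aE_{ii}+\alpha E_{ij}$ with $Y=E_{ii}+E_{jj}+a^{-1}\alpha E_{ij}$, together with $F_{ii}(a)F_{ij}(a^{-1}\alpha)=F_{ij}(\alpha)$. The middle case, however, genuinely needs the output construction.
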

	
	\begin{proof}
		Let $a,b,\alpha\in\DD$.
		
		\textbf{Case 1.}
		Assume that the pair has an upper endpoint witness, say
		$i\preceq j\preceq k$ with $k\notin\{i,j\}$. By
		Lemma~\ref{lem:chain-orientation}, the coordinates $(i,j)$, $(j,k)$
		and $(i,k)$ have the same $\Phi$-orientation. Let
		\[
		A:=aE_{ii}+bE_{jj}+\alpha E_{ij}+E_{kk}.
		\]
		By \eqref{eq:compression} and Lemma~\ref{lem:coordinate-setup}, the $\{i,k\}$- and $\{j,k\}$-compressions
		of $\Phi(A)$ are
		\[
		P_{\{i,k\}}\Phi(A)P_{\{i,k\}}
		=
		\Phi(aE_{ii}+E_{kk})
		=
		F_{ii}(a)E_{ii}+E_{kk},
		\]
		and
		\[
		P_{\{j,k\}}\Phi(A)P_{\{j,k\}}
		=
		\Phi(bE_{jj}+E_{kk})
		=
		F_{jj}(b)E_{jj}+E_{kk}.
		\]
		By Lemma~\ref{lem:two-point-compressions}, the only possibly undetermined entries of $\Phi(A)$ in this
		three-point corner are the two off-diagonal entries in the $\{i,j\}$-corner. Hence
		\[
		\Phi(A)
		=
		F_{ii}(a)E_{ii}+F_{jj}(b)E_{jj}+E_{kk}
		+uE_{ij}+vE_{ji}
		\]
		for some $u,v\in\DD$. Note the identities
		\begin{equation}\label{eq:the-same-two-identities}
			AE_{jk}A = \alpha E_{ik}+bE_{jk}, \qquad AE_{ik}A = aE_{ik}.
		\end{equation}
		Suppose first that the common orientation is $\Phi$-preserving. By Lemma~\ref{lem:chain-orientation}, we have
		\begin{align*}
			uF_{jk}(1)E_{ik}
			+F_{jj}(b)F_{jk}(1)E_{jk} &= \Phi(A)\left(F_{jk}(1)E_{jk}\right)\Phi(A) = \Phi(\alpha E_{ik}+bE_{jk})
			\\
			&=
			F_{ik}(\alpha)E_{ik}+F_{jk}(b)E_{jk}.
		\end{align*}
		By \eqref{eq:sandwich-preserving}, we already know $F_{jj}(b)F_{jk}(1)=F_{jk}(b)$, so the comparison of $(i,k)$ coordinates gives
		$uF_{jk}(1)=F_{ik}(\alpha)$. Similarly,
		\[
		F_{ii}(a)F_{ik}(1)E_{ik}
		+vF_{ik}(1)E_{jk} = \Phi(A)\left(F_{ik}(1)E_{ik}\right)\Phi(A)=\Phi(aE_{ik})=
		F_{ik}(a)E_{ik},
		\]
		and $F_{ii}(a)F_{ik}(1)=F_{ik}(a)$ by
		\eqref{eq:sandwich-preserving}, so $vF_{ik}(1)=0$.
		Using $F_{ik}(\alpha)=F_{ij}(\alpha)F_{jk}(1)$ from
		Lemma~\ref{lem:chain-orientation}, and cancelling the nonzero elements
		$F_{jk}(1)$ and $F_{ik}(1)$, we obtain $u=F_{ij}(\alpha)$ and $v=0$.
		Thus
		\[
		\Phi(A)
		=
		F_{ii}(a)E_{ii}+F_{jj}(b)E_{jj}+E_{kk}
		+F_{ij}(\alpha)E_{ij}.
		\]
		If the common orientation is $\Phi$-reversing, the same two identities \eqref{eq:the-same-two-identities} instead give
		$F_{jk}(1)v=F_{ik}(\alpha)$ and $F_{ik}(1)u=0$, with the remaining
		coefficients accounted for by \eqref{eq:sandwich-reversing}. Since now
		$F_{ik}(\alpha)=F_{jk}(1)F_{ij}(\alpha)$, we get
		$v=F_{ij}(\alpha)$ and $u=0$. Hence
		\[
		\Phi(A)
		=
		F_{ii}(a)E_{ii}+F_{jj}(b)E_{jj}+E_{kk}
		+F_{ij}(\alpha)E_{ji}.
		\]
		Finally, compressing the last two displayed formulas to $\{i,j\}$ gives
		\[
		\Phi(aE_{ii}+bE_{jj}+\alpha E_{ij})
		=
		\begin{cases}
			F_{ii}(a)E_{ii}+F_{jj}(b)E_{jj}
			+F_{ij}(\alpha)E_{ij}, & \text{in the preserving case},\\
			F_{ii}(a)E_{ii}+F_{jj}(b)E_{jj}
			+F_{ij}(\alpha)E_{ji}, & \text{in the reversing case}.
		\end{cases}
		\]
		
		\textbf{Case 2.} The lower-endpoint case is similar, but the coefficient order is reversed.
		Suppose $k\notin\{i,j\}$ satisfies $k\preceq i\preceq j$. By
		Lemma~\ref{lem:chain-orientation}, the coordinates $(k,i)$, $(i,j)$
		and $(k,j)$ have the same $\Phi$-orientation. Set
		\[
		A:=E_{kk}+aE_{ii}+bE_{jj}+\alpha E_{ij}.
		\]
		The $\{k,i\}$- and $\{k,j\}$-compressions of $\Phi(A)$ are diagonal,
		so (by Lemma~\ref{lem:two-point-compressions}) the only unknown entries again occur in the $\{i,j\}$-corner. Hence
		\[
		\Phi(A)
		=
		E_{kk}+F_{ii}(a)E_{ii}+F_{jj}(b)E_{jj}
		+uE_{ij}+vE_{ji}
		\]
		for some $u,v\in\DD$. Consider the identities
		\[
		AE_{ki}A=aE_{ki}+\alpha E_{kj},
		\qquad
		AE_{kj}A=bE_{kj}.
		\]
		In the preserving case, applying $\Phi$ to the identities and using Lemma~\ref{lem:chain-orientation} gives
		\[
		F_{ki}(1)u=F_{kj}(\alpha),
		\qquad
		F_{kj}(1)v=0,
		\]
		with the remaining coefficients matching by \eqref{eq:sandwich-preserving}.
		Lemma~\ref{lem:chain-orientation} for $k\preceq i\preceq j$ gives
		$F_{kj}(\alpha)=F_{ki}(1)F_{ij}(\alpha)$; hence
		$u=F_{ij}(\alpha)$ and $v=0$. In the reversing case, the same two identities give
		\[
		vF_{ki}(1)=F_{kj}(\alpha),
		\qquad
		uF_{kj}(1)=0,
		\]
		with the remaining coefficients matching by \eqref{eq:sandwich-reversing}.
		Since now $F_{kj}(\alpha)=F_{ij}(\alpha)F_{ki}(1)$, we get
		$v=F_{ij}(\alpha)$ and $u=0$. In either case, the final result follows by compressing to $\{i,j\}$.
		
		\textbf{Case 3.}  It remains to treat a nonsymmetric middle-nonisolated pair. Suppose
		$i\preceq k\preceq j$ with $k\ne i,j$. Choose $r,s\in\DD$ with
		$rs=\alpha$, and set
		\[
		A:=E_{ii}+E_{jj}+rE_{ik}+sE_{kj},
		\qquad
		B:=aE_{ii}+E_{kk}+bE_{jj}.
		\]
		By \eqref{eq:all-diagonal-coordinate},
		\[
		\Phi(B)=F_{ii}(a)E_{ii}+E_{kk}+F_{jj}(b)E_{jj}.
		\]
		If $k\preceq i$, then $k\preceq i\preceq j$ gives lower-endpoint
		nonisolation, already treated. If $j\preceq k$, then
		$i\preceq j\preceq k$ gives upper-endpoint nonisolation, also already treated.
		Thus we may assume that $i\preceq k$, $k\not\preceq i$, and
		$k\preceq j$, $j\not\preceq k$. Hence
		\begin{align*}
			P_{\{i,j\}}\Phi(A)P_{\{i,j\}}
			&=\Phi(E_{ii}+E_{jj})=E_{ii}+E_{jj},\\
			P_{\{i,k\}}\Phi(A)P_{\{i,k\}}
			&=\Phi(E_{ii}+rE_{ik})=E_{ii}+F_{ik}(r)U_{ik},\\
			P_{\{k,j\}}\Phi(A)P_{\{k,j\}}
			&=\Phi(E_{jj}+sE_{kj})=E_{jj}+F_{kj}(s)U_{kj},
		\end{align*}
		where the matrices $U_{pq}$ are as in \eqref{eq:Upq-definition}. By Lemma~\ref{lem:two-point-compressions}, these three compressions
		determine $\Phi(A)$. By Lemma~\ref{lem:chain-orientation}, the coordinates
		$(i,k)$, $(k,j)$ and $(i,j)$ have the same $\Phi$-orientation. In the $\Phi$-preserving case,
		\[
		\Phi(A)=E_{ii}+E_{jj}+F_{ik}(r)E_{ik}+F_{kj}(s)E_{kj}.
		\]
		Since
		\[
		ABA=aE_{ii}+bE_{jj}+arE_{ik}+sbE_{kj}+rsE_{ij},
		\]
		the compression identity gives
		\begin{align*}
			\Phi(aE_{ii}+bE_{jj}+\alpha E_{ij})
			&=\Phi(P_{\{i,j\}}(ABA)P_{\{i,j\}})\\
			&=P_{\{i,j\}}\Phi(A)\Phi(B)\Phi(A)P_{\{i,j\}}\\
			&=F_{ii}(a)E_{ii}+F_{jj}(b)E_{jj}
			+F_{ik}(r)F_{kj}(s)E_{ij}\\
			&=F_{ii}(a)E_{ii}+F_{jj}(b)E_{jj}
			+F_{ij}(\alpha)E_{ij},
		\end{align*}
		where the last equality follows from \eqref{eq:chain-coeff-preserving} and
		$rs=\alpha$. In the $\Phi$-reversing case, one easily gets
		\[
		\Phi(A)=E_{ii}+E_{jj}+F_{ik}(r)E_{ki}+F_{kj}(s)E_{jk},
		\]
		and the same computation gives
		\begin{align*}
			\Phi(aE_{ii}+bE_{jj}+\alpha E_{ij})
			&=P_{\{i,j\}}\Phi(A)\Phi(B)\Phi(A)P_{\{i,j\}}\\
			&=F_{ii}(a)E_{ii}+F_{jj}(b)E_{jj}
			+F_{kj}(s)F_{ik}(r)E_{ji}\\
			&=F_{ii}(a)E_{ii}+F_{jj}(b)E_{jj}
			+F_{ij}(\alpha)E_{ji},
		\end{align*}
		by \eqref{eq:chain-coeff-reversing} and $rs=\alpha$. This closes the proof.
	\end{proof}
	
	\subsection{Additivity}\label{sec:additivity}
	Before treating the general case, we first focus on the $M_2(\DD)$ case and resolve it completely, by carefully adapting Step 1 of the proof of \cite[Theorem~1]{Lesnjak-Sze} to the noncommutative setting. Throughout,  if $\theta:\DD\to\DD$ is a map and
	$A=[a_{ij}]\in M_n(\DD)$, we denote by $\theta[A]$ the matrix obtained from
	$A$ by applying $\theta$ entrywise, that is,
	\[
	\theta[A]:=[\theta(a_{ij})]\in M_n(\DD).
	\]
	
	\begin{remark}\label{rem:transpose-convention}
		For $n\ge1$, the ordinary transpose on $M_n(\DD)$ is anti-multiplicative if
		and only if $\DD$ is commutative. Indeed, if $(AB)^t=B^tA^t$ for all
		$A,B\in M_n(\DD)$, then applying this to $A=aE_{11}$ and $B=bE_{11}$ gives
		$ab=ba$ for all $a,b\in\DD$. Consequently, over a noncommutative division
		ring, the product-reversing standard form has to be written as a \emph{twisted transpose}. More precisely, if $\tau$ is an anti-endomorphism of $\DD$, then $A\mapsto \tau[A^t]$ is an anti-endomorphism of $M_n(\DD)$. Indeed, for $A=[a_{ij}]$ and $B=[b_{ij}]$ in $M_n(\DD)$, we have
		\[
		(\tau[(AB)^t])_{ij}
		=
		\tau\left(\sum_{k=1}^n a_{jk}b_{ki}\right)
		=
		\sum_{k=1}^n \tau(b_{ki})\tau(a_{jk})
		=
		(\tau[B^t]\tau[A^t])_{ij}.
		\]
		In particular, if $\DD$ admits no injective anti-endomorphism, the reversing case in the next proposition is void.
	\end{remark}
	
	\begin{proposition}\label{prop:m2-rigidity}
		Every injective Jordan semi-triple map $\psi:M_2(\DD)\to M_2(\DD)$ is additive. More precisely, there are $T\in \GL_2(\DD)$ and $\eps\in\{1,-1\}$ such that either
		\begin{equation}\label{eq:psi-M2-mult}
			\psi(A)=\eps\, T\sigma[A] T^{-1},
			\qquad A\in M_2(\DD),
		\end{equation}
		for a ring monomorphism $\sigma:\DD\to\DD$, or
		\begin{equation}\label{eq:psi-M2-antimult}
			\psi(A)=\eps\, T \tau[A^t] T^{-1},
			\qquad A\in M_2(\DD),
		\end{equation}
		for a ring anti-monomorphism $\tau:\DD\to\DD$.
	\end{proposition}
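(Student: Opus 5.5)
Apply the normalization of Section~\ref{sec:normalization} with $n=2$ and $\preceq$ the full relation, so $\R=M_2(\DD)$. Write $J:=\psi(I)$; by Lemma~\ref{lem:normalization}, $J^2=I$ and $J$ commutes with $\psi(M_2(\DD))$. By Proposition~\ref{prop:diagonal-straightening}, after conjugating by some $T_0\in\GL_2(\DD)$, the map $\Phi=J\psi$ fixes $E_{11}$ and $E_{22}$. Lemma~\ref{lem:coordinate-setup} then gives the coordinate maps $F_{11},F_{22},F_{12},F_{21}$, each with an orientation. Here both pairs $(1,2)$ and $(2,1)$ are symmetric, so Proposition~\ref{prop:nonisolated-two-point} does not apply; this is the price of working in $M_2$. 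In exchange, we have the extra identities $E_{12}E_{21}E_{12}=E_{12}$ and $(aE_{12})(bE_{21})(aE_{12})=abaE_{12}$.

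\textbf{Step 1: orientations and $J$.} Apply $\Phi$ to $E_{12}E_{21}E_{12}=E_{12}$ and use \eqref{eq:pure-unified}. This forces $(1,2)$ and $(2,1)$ to have the same orientation: otherwise the product of the images is supported on the wrong entry or vanishes. It also gives $F_{12}(1)F_{21}(1)F_{12}(1)=F_{12}(1)$. A diagonal similarity $\diag(1,F_{12}(1)^{\mp1})$ then normalizes $F_{12}(1)=F_{21}(1)=1$. Since $J$ commutes with $\Phi(E_{11})=E_{11}$, $\Phi(E_{22})=E_{22}$ and $\Phi(E_{12})$, we get $J=\eps I$ with $\eps=\pm1$. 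Indeed, $J$ is diagonal with $J^2=I$, and commuting with $U_{12}$ forces the two diagonal entries to coincide. (A priori only $\eps^2=1$ in $\DD$; if $\operatorname{char}\DD=2$ this just means $\eps=1$.)

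\textbf{Step 2: a single coefficient map.} In the preserving case, \eqref{eq:sandwich-preserving} with $t=1$ gives $F_{12}(a)=F_{11}(a)=F_{22}(a)$, once we compare $F_{12}(1\cdot a)$ and $F_{12}(a\cdot1)$. Likewise $F_{21}(a)$ equals the same map. Call this map $\sigma$. Sandwich identities such as $(E_{11}+E_{21})(aE_{11})(E_{11}+E_{12})$ do not directly apply, so instead use $E_{12}(aE_{22})E_{21}$-type triples: $(E_{12}+E_{21})(aE_{11})(E_{12}+E_{21})=aE_{22}$, and the full-matrix analogue of Lemma~\ref{lem:chain-orientation}. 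These show that $\sigma$ is multiplicative in the form $\sigma(aba)=\sigma(a)\sigma(b)\sigma(a)$ and $\sigma(ab)=\sigma(a)\sigma(b)$. The latter comes from $\Phi(abE_{11})=\Phi(aE_{12}\cdot \ldots)$ through the triple $(aE_{12}+E_{21})E_{22}(aE_{12}+E_{21})$-style identities, computed after first determining $\Phi$ on two-entry matrices via compressions. In the reversing case the same computations give an anti-multiplicative $\tau$, and the images land in transposed positions.

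\textbf{Step 3: additivity (main obstacle).} We must show $\Phi(A)=\sigma[A]$ (respectively $\tau[A^t]$) for arbitrary $A$, and that $\sigma$ is additive. For additivity, use the key identity
\[
(E_{11}+E_{12})\,\mathrm{diag}(a,b)\,(E_{11}+E_{12})=(a)(E_{11}+E_{12}),
\]
and more usefully
\[
(E_{11}+E_{21})(aE_{11}+bE_{12}+cE_{21}+dE_{22})(E_{11}+E_{12})
\]
computed as a Jordan triple $XYX$ with $X=E_{11}+E_{12}+E_{21}+E_{22}$, which is idempotent up to scaling. For $X=E_{11}+E_{12}+E_{21}+E_{22}$ we have $XE_{11}X=X$ and $X(aE_{11}+bE_{22})X=(a+b)X$. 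Once $\Phi(X)$ is shown to equal $X$, this yields $\sigma(a+b)=\sigma(a)+\sigma(b)$. Computing $\Phi(X)$ is the hard point: its off-diagonal compressions are not controlled by Lemma~\ref{lem:two-point-compressions}, since the only two-point corner is the whole matrix. I would determine $\Phi(X)$ from $\Phi(X)E_{11}\Phi(X)=\Phi(X)$, $\Phi(X)E_{22}\Phi(X)=\Phi(X)$ and the relations $E_{ii}XE_{ii}=E_{ii}$, which pin its diagonal and force its off-diagonal entries to be $1$. I would then handle general $A$ by the standard Lešnjak--Sze trick, sandwiching $A$ between rank-one idempotents $E_{ii}+xE_{ij}$ and reading off entries. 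Finally, undoing the normalizations gives $\psi=\eps T\sigma[\cdot]T^{-1}$ or $\eps T\tau[(\cdot)^t]T^{-1}$, with injectivity of $\sigma,\tau$ inherited from $\Phi$.
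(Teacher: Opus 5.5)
Your Step 1 is fine, and so is the identification of the coordinate maps in Step 2: after normalizing $F_{12}(1)=F_{21}(1)=1$, reading \eqref{eq:sandwich-preserving} (or \eqref{eq:sandwich-reversing}) at $t=1$ gives $F_{11}=F_{22}=F_{12}=F_{21}$ directly. The genuine gap is in Step 3, which is where the proof actually lies. You never determine $\Phi(A)$ for an arbitrary $A$, and the one concrete computation you propose does not work. Write $\Phi(V)=\begin{bmatrix}1&u\\ v&1\end{bmatrix}$, where the diagonal is $1$ because $E_{ii}VE_{ii}=E_{ii}$. The relations $VE_{ii}V=V$ then only give $vu=1$ (the $(2,2)$-entry of $\Phi(V)E_{11}\Phi(V)$) and $uv=1$ (the $(1,1)$-entry of $\Phi(V)E_{22}\Phi(V)$). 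So $u\in\DD^\times$ is still arbitrary, and $u=v=1$ is not forced. Likewise, sandwiching by rank-one idempotents $E_{ii}+xE_{ij}$ presupposes that you know $\Phi(E_{ii}+xE_{ij})$. The compression identity cannot supply this in $M_2(\DD)$, since the only two-point corner is the whole ring. Without a formula for $\Phi$ on all of $M_2(\DD)$, additivity of $\sigma$ would not give additivity of $\Phi$ anyway.

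The missing idea is to use the off-diagonal matrix units themselves as outer factors, since their images are already known after Step 1. Together with $E_{ii}AE_{ii}=a_{ii}E_{ii}$, the identities $E_{12}AE_{12}=a_{21}E_{12}$ and $E_{21}AE_{21}=a_{12}E_{21}$ give the following.
\begin{itemize}
\item Preserving case: $\Phi(A)_{ii}=F_{ii}(a_{ii})$, $\Phi(A)_{21}=F_{12}(a_{21})$ and $\Phi(A)_{12}=F_{21}(a_{12})$.
\item Reversing case: $\Phi(A)_{12}=F_{12}(a_{21})$ and $\Phi(A)_{21}=F_{21}(a_{12})$, with the same diagonal entries.
\end{itemize}
Hence $\Phi(A)=\sigma[A]$, respectively $\tau[A^t]$, for every $A$ at once. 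In particular $\Phi(V)=V$, since an off-diagonal entry is pinned by $E_{21}VE_{21}=E_{21}$. Your identities $V(cE_{11})V=cV$ and $V(aE_{11}+bE_{22})V=(a+b)V$ then yield additivity of $\sigma$. These four sandwich identities are precisely the key step of the paper's proof.

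Separately, multiplicativity in Step 2 should not be routed through ``two-entry matrices via compressions'' or a chain lemma. Lemma~\ref{lem:chain-orientation} needs three distinct indices, which are unavailable in $M_2$. Multiplicativity is immediate from \eqref{eq:sandwich-preserving}: $\sigma(at)=F_{12}(at)=F_{11}(a)F_{12}(t)=\sigma(a)\sigma(t)$. In the reversing case, \eqref{eq:sandwich-reversing} gives $\tau(at)=\tau(t)\tau(a)$ in the same way.
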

	
	\begin{proof}
		Following Lemma~\ref{lem:normalization} and  Proposition~\ref{prop:diagonal-straightening}, from $\psi$ we obtain the normalized injective Jordan semi-triple map $\Psi : M_2(\DD) \to M_2(\DD)$ which in particular satisfies the properties
		\[
		\Psi(0)=0,
		\qquad \Psi(I)=I,
		\qquad \Psi(E_{11})=E_{11},
		\qquad \Psi(E_{22})=E_{22}.
		\]
		By Lemma~\ref{lem:coordinate-setup}, for all $a \in \DD$ we have
		\[
		\Psi(aE_{12})=F_{12}(a)U_{12},
		\qquad
		\Psi(aE_{21})=F_{21}(a)U_{21},
		\]
		where $U_{12},U_{21}$ are as in \eqref{eq:Upq-definition}. The identity $E_{12}E_{21}E_{12}=E_{12}$ excludes $U_{12}=U_{21}$, so both coordinates are $\Psi$-preserving or both are $\Psi$-reversing. If $\alpha:=F_{12}(1)$ and $\beta:=F_{21}(1)$, the same identity gives $\alpha\beta\alpha=\alpha$, hence $\beta=\alpha^{-1}$. Further replace $\Psi$ by $D^{-1}\Psi(\cdot)D$, where $D \in \GL_2(\DD)$ is a diagonal similarity fixing $E_{11}$ and $E_{22}$ which normalizes these coefficients: use $D = \diag(1,\alpha^{-1})$ in the $\Psi$-preserving case and $D = \diag(1,\alpha)$ in the $\Psi$-reversing case. Continuing to use the notation of Lemma~\ref{lem:coordinate-setup}, write the four coordinate maps again as $F_{11},F_{22},F_{12},F_{21}$ (so that $F_{12}(1)= F_{21}(1) = 1$). 
		
		Assume first that the orientation is $\Psi$-preserving. Let $A=[a_{pq}] \in M_2(\DD)$ be arbitrary. The identities
		\begin{equation}\label{eq:the-four-sandwich-identities}
			E_{11}AE_{11}=a_{11}E_{11},
			\quad
			E_{22}AE_{22}=a_{22}E_{22},
			\quad
			E_{12}AE_{12}=a_{21}E_{12},
			\quad
			E_{21}AE_{21}=a_{12}E_{21}
		\end{equation}
		give, after applying $\Psi$ and using the known images of $E_{11},E_{22},E_{12},E_{21}$,
		\begin{equation}\label{eq:entry-formula}
			\Psi(A) = \Psi\left(\begin{bmatrix}
				a_{11} & a_{12} \\ a_{21} & a_{22}
			\end{bmatrix}\right) = \begin{bmatrix}
				F_{11}(a_{11}) & F_{21}(a_{12}) \\
				F_{12}(a_{21}) & F_{22}(a_{22}) 
			\end{bmatrix}.
		\end{equation}
		For instance, $E_{21}\Psi(A)E_{21}=\Psi(A)_{12}E_{21}$, which explains the placement of $F_{21}$ in the $(1,2)$-entry. If $A=aE_{12}$ with $a \in \DD$, the formula \eqref{eq:pure-preserving} from Lemma~\ref{lem:coordinate-setup} gives $\Psi(A)=F_{12}(a)E_{12}$, while the displayed entry formula \eqref{eq:entry-formula} gives $\Psi(A)=F_{21}(a)E_{12}$. Therefore $F_{21}(a)=
		F_{12}(a)$. Thus we may denote $f:= F_{12}=F_{21}$. Let
		\[
		V:=E_{11}+E_{12}+E_{21}+E_{22} = \begin{bmatrix}
			1 & 1 \\ 1 & 1
		\end{bmatrix}.
		\]
		Applying the entry formula \eqref{eq:entry-formula} to $A=V$ shows that $\Psi(V)=V$. The identity $V(aE_{11})V=aV$ gives
		\[
		F_{11}(a)V = VF_{11}(a)E_{11}V=\Psi(aV)  \stackrel{\eqref{eq:entry-formula}}= \begin{bmatrix}
			F_{11}(a) & f(a) \\ f(a) & F_{22}(a)
		\end{bmatrix}.
		\]
		Comparing entries gives $F_{11}(a)=F_{22}(a)=f(a)$ for all $a \in \DD$. Hence $\Psi(A)=\theta[A]$ for a single injective unital map $\theta:\DD\to\DD$.
		
		In the $\Psi$-reversing case we have $\Psi(E_{12})=E_{21}$ and $\Psi(E_{21})=E_{12}$. The same four sandwich identities \eqref{eq:the-four-sandwich-identities} give
		\[
		\Psi(A) = \Psi\left(\begin{bmatrix}
			a_{11} & a_{12} \\ a_{21} & a_{22}
		\end{bmatrix}\right) = \begin{bmatrix}
			F_{11}(a_{11}) & F_{12}(a_{21}) \\
			F_{21}(a_{12}) & F_{22}(a_{22}) 
		\end{bmatrix}.
		\]
		Comparing this entry formula with \eqref{eq:pure-reversing} again gives $F_{12}=F_{21}=:f$. Also $\Psi(V)=V$, and the identity $V(aE_{11})V=aV$ gives $F_{11}=F_{22}=f$. Hence $\Psi(A)=\theta[A^t]$ for a single injective unital map $\theta:\DD\to\DD$.
		
		It remains to show the required algebraic properties of $\theta$. For $a,b\in \DD$, let $X:=aE_{11}+bE_{12}$. Assume the $\Psi$-preserving case. Since $\Psi(I) = I$, it follows $\Psi(X^2)=\Psi(X)^2$. As $X^2=a^2E_{11}+abE_{12}$, comparison of the $E_{12}$-coefficient gives $\theta(ab)=\theta(a)\theta(b)$. Applying $\Psi$ to $V(aE_{11}+bE_{12})V=(a+b)V$ and comparing any entry gives $\theta(a+b)=\theta(a)+\theta(b)$. Thus $\theta$ is a ring monomorphism. In this case, set $\sigma:=\theta$. In the $\Psi$-reversing case the same calculation gives
		\[
		\Psi(X)=\theta(a)E_{11}+\theta(b)E_{21},
		\qquad
		\Psi(X)^2=\theta(a)^2E_{11}+\theta(b)\theta(a)E_{21},
		\]
		so $\theta(ab)=\theta(b)\theta(a)$. In the same fashion we obtain additivity. Hence $\theta$ is a ring
		anti-monomorphism. In this case, set $\tau:=\theta$.
		
		It remains to undo the reductions. Let $J:=\psi(I)$. By
		Lemma~\ref{lem:normalization}, the initial normalized map is given by $\Phi(A):=J\psi(A)$, where $J^2=I_2$, and $J$ commutes with $\Phi(M_2(\DD))$. Let $T\in\GL_2(\DD)$ be the
		product of all range similarities used after this normalization, chosen so
		that the final normalized map is
		\[
		\Psi(A):=T^{-1}\Phi(A)T=T^{-1}J\psi(A)T, \qquad A \in M_2(\DD).
		\]
		Set $\widehat J:=T^{-1}JT$. Then $\widehat J$ commutes with $\Psi(M_2(\DD))$. In either the $\Psi$-preserving
		or the $\Psi$-reversing case, the image $\Psi(M_2(\DD))$ contains the four standard
		matrix units, and hence $\widehat J=\eps I_2$ for some $\eps\in\DD$. Since
		$\widehat J^2=I_2$, we have $\eps^2=1$ and thus $\eps=\pm 1$. 
		
		In the $\Psi$-preserving case, we have $\Psi(A)=\sigma[A]$ for all
		$A\in M_2(\DD)$. Since
		\[
		\Psi(A)=T^{-1}J\psi(A)T=\eps\,T^{-1}\psi(A)T,
		\]
		we obtain
		\[
		\psi(A)=\eps\,T\sigma[A]T^{-1},
		\qquad A\in M_2(\DD).
		\]
		Thus \eqref{eq:psi-M2-mult} holds. Similarly, in the $\Psi$-reversing case, $\Psi(A)=\tau[A^t]$ for all
		$A\in M_2(\DD)$, and the same calculation gives
		\[
		\psi(A)=\eps\,T\tau[A^t]T^{-1},
		\qquad A\in M_2(\DD).
		\]
		Thus \eqref{eq:psi-M2-antimult} holds. This proves the claim.
	\end{proof}
	
	We now return to an arbitrary injective Jordan semi-triple map
	$\phi:\R_{\preceq}\to M_n(\DD)$ and to its associated normalized map $\Phi$. As before, we simply write  $\R=\R_{\preceq}$.
	
	\begin{proposition}\label{prop:suff-additivity}
		Suppose that every nonsymmetric comparable pair is nonisolated and that, whenever singleton connected components of $\Gamma_{\preceq}$ occur, $\DD\cong\FF_3$ or $\DD\cong\FF_4$. Then $\phi$ is additive.
	\end{proposition}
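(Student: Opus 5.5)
The plan is to show that, after the normalizations of Lemma~\ref{lem:normalization} and Proposition~\ref{prop:diagonal-straightening}, the map $\Phi$ acts \emph{entrywise through coordinate functions}, and then to prove that every coordinate function is additive. By Lemma~\ref{lem:normalization}, additivity of $\Phi$ is equivalent to additivity of $\phi$, and range similarities do not affect it.

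\textbf{Step 1: an entry formula.} Fix $X=[x_{pq}]\in\R$. By Lemma~\ref{lem:two-point-compressions}, $\Phi(X)$ is recovered from the compressed images $\Phi(P_{\{p,q\}}XP_{\{p,q\}})$ with $p\ne q$. I would treat the three kinds of pairs separately.
\begin{itemize}
\item If $p,q$ are incomparable, the compression is diagonal, so \eqref{eq:all-diagonal-coordinate} gives its image.
\item If $p\preceq q$ and $q\not\preceq p$, condition $(\mathsf P)$ says the pair is nonisolated, so Proposition~\ref{prop:nonisolated-two-point} gives the image.
\item If $p\asymp q$, the $\{p,q\}$-corner is a copy of $M_2(\DD)$. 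By the compression identity \eqref{eq:compression}, $\Phi$ maps this corner injectively into itself, and it satisfies the Jordan semi-triple identity there.
\end{itemize}
In the third case the corner map already fixes $I_2$, $E_{pp}$ and $E_{qq}$. So the proof of Proposition~\ref{prop:m2-rigidity}, via \eqref{eq:entry-formula} and its reversing analogue, shows that it is given entrywise by $F_{pp},F_{qq},F_{pq},F_{qp}$. These are placed either in the same positions or in transposed positions, according to the $\Phi$-orientation.

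Combining the three cases, I expect
\[
\Phi(X)=\sum_{p}F_{pp}(x_{pp})E_{pp}+\sum_{p\prec q}F_{pq}(x_{pq})U_{pq},
\]
with $U_{pq}$ as in \eqref{eq:Upq-definition}. Consistency is automatic, because each diagonal entry is $F_{pp}(x_{pp})$ in every compression containing $p$. Hence $\Phi$ is additive as soon as every coordinate function $F_{pq}$ (including the diagonal ones $F_{pp}$) is additive.

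\textbf{Step 2: additivity of the coordinate functions.}

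\emph{Singleton components.} If $\{i\}$ is a singleton component, $F_{ii}$ is an injective scalar Jordan semi-triple map, by \eqref{eq:diag-coordinate} and the compression identity. Condition $(\mathsf S)$ says $\DD\cong\FF_3$ or $\FF_4$, so $F_{ii}$ is additive by Proposition~\ref{prop:scalar-classification}.

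\emph{Symmetric pairs.} If $p\asymp q$, Proposition~\ref{prop:m2-rigidity} shows that the corner map is $\eps\,T\theta[\cdot]T^{-1}$ or $\eps\,T\theta[(\cdot)^t]T^{-1}$ with $\theta$ additive. Each coordinate function on that corner is therefore of the form $a\mapsto c\,\theta(a)\,d$ for fixed $c,d\in\DD$, and so is additive. Any vertex lying in a nontrivial $\asymp$-class gets additivity of its $F_{pp}$ this way.

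\emph{Nonsymmetric pairs.} Suppose $i\preceq j$, $j\not\preceq i$. Put
\[
A:=E_{ii}+E_{ij}+E_{jj},\qquad Y:=aE_{ii}+bE_{jj}.
\]
Then
\[
AYA=aE_{ii}+(a+b)E_{ij}+bE_{jj}.
\]
By Proposition~\ref{prop:nonisolated-two-point}, in the preserving case $\Phi(A)=E_{ii}+cE_{ij}+E_{jj}$ with $c:=F_{ij}(1)\ne0$. Comparing $(i,j)$-entries of $\Phi(AYA)=\Phi(A)\Phi(Y)\Phi(A)$ gives
\[
F_{ij}(a+b)=F_{ii}(a)c+cF_{jj}(b)=F_{ij}(a)+F_{ij}(b),
\]
where the last equality uses \eqref{eq:sandwich-preserving}. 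The reversing case is symmetric, using \eqref{eq:sandwich-reversing}. Thus $F_{ij}$ is additive. Since $F_{ii}(a)=F_{ij}(a)c^{-1}$ and $F_{jj}(a)=c^{-1}F_{ij}(a)$, the functions $F_{ii}$ and $F_{jj}$ are additive as well.

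\emph{Every vertex is covered.} Each vertex of a nonsingleton component is comparable with some other vertex. Such a pair is either symmetric or nonsymmetric, so one of the two arguments above applies to it. Hence all coordinate functions are additive, and Step 1 gives additivity of $\Phi$, hence of $\phi$.

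\textbf{Expected main obstacle.} The delicate point is Step 1 for symmetric pairs. Here one has to check that the conclusion of Proposition~\ref{prop:m2-rigidity}, applied to the \emph{already globally normalized} $\Phi$ restricted to a corner, really produces the entrywise description with the same functions $F_{pq}$ used elsewhere. In particular, the extra diagonal similarity in that proof must not clash with the global normalization. To handle this, I would argue directly with the four sandwich identities \eqref{eq:the-four-sandwich-identities} inside the corner, without renormalizing, so that the entries are read off as $F_{pp},F_{qq},F_{pq},F_{qp}$ regardless of the scaling.
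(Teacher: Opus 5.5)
Your proposal is correct and follows essentially the paper's route. It reduces to two-point corners via Lemma~\ref{lem:two-point-compressions} and gets additivity of $F_{ii},F_{ij},F_{jj}$ for nonsymmetric pairs from Proposition~\ref{prop:nonisolated-two-point}; your identity $AYA=aE_{ii}+(a+b)E_{ij}+bE_{jj}$ is the same computation as the paper's $\Phi(X^2)=\Phi(X)^2$ with $X=aE_{ii}+bE_{jj}+E_{ij}$. It then uses Proposition~\ref{prop:m2-rigidity} on symmetric corners and Proposition~\ref{prop:scalar-classification} on singleton components.

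The only difference is packaging. The paper applies Proposition~\ref{prop:m2-rigidity} to get additivity of the whole symmetric corner map directly, so your separate entrywise description of symmetric corners is harmless but not needed. (Without the extra rescaling, the sandwich identities make each off-diagonal entry a function of a single entry of $A$; specializing to single matrix units identifies it with $F_{pq}$ or $F_{qp}$.)
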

	
	\begin{proof}
		First consider a pair of indices $i \prec j$ on which Proposition~\ref{prop:nonisolated-two-point} applies. For $a,b\in \DD$, let
		\[
		X:=aE_{ii}+bE_{jj}+E_{ij},
		\]
		so that
		\[
		\Phi(X) = F_{ii}(a)E_{ii} + F_{jj}(b)E_{jj} + F_{ij}(1)U_{ij}.
		\]
		We have
		\[
		\Phi(X^2) = \Phi(a^2E_{ii} + b^2E_{jj} + (a+b)E_{ij}) = F_{ii}(a^2)E_{ii} + F_{jj}(b^2)E_{jj} + F_{ij}(a+b)U_{ij}.
		\]
		Since $\Phi(I)=I$, we have $\Phi(X^2)=\Phi(X)^2$. If $(i,j)$ is $\Phi$-preserving, we have
		\[
		\Phi(X^2) = \Phi(X)^2 = F_{ii}(a)^2E_{ii} + F_{jj}(b)^2E_{jj} + (F_{ii}(a)F_{ij}(1)+F_{ij}(1)F_{jj}(b))E_{ij}.
		\]
		Comparing the $E_{ij}$-coefficient gives
		\[
		F_{ij}(a+b)=F_{ii}(a)F_{ij}(1)+F_{ij}(1)F_{jj}(b)
		\stackrel{\eqref{eq:sandwich-preserving}}=F_{ij}(a)+F_{ij}(b).
		\]
		Therefore, $F_{ij}$ is additive. Since $F_{ij}(a)=F_{ii}(a)F_{ij}(1)=F_{ij}(1)F_{jj}(a)$ and $F_{ij}(1)\ne0$, cancellation gives additivity of $F_{ii}$ and $F_{jj}$. In the $\Phi$-reversing case the same calculation gives
		\[
		F_{ij}(a+b)=F_{ij}(1)F_{ii}(a)+F_{jj}(b)F_{ij}(1)
		\stackrel{\eqref{eq:sandwich-reversing}}=F_{ij}(a)+F_{ij}(b),
		\]
		and cancellation again gives additivity of $F_{ii}$ and $F_{jj}$. Consequently, for every coordinate $(i,j)$ on which Proposition~\ref{prop:nonisolated-two-point} applies, the maps $F_{ii},F_{ij}$ and $F_{jj}$ are additive.
		
		We now prove additivity of $\Phi$ on all of $\R$. Let $i\ne j$ and set
		$P:=P_{\{i,j\}}$. Given $X,Y\in\R$, Lemma~\ref{lem:two-point-compressions}
		shows that it is enough to prove
		\[
		P\Phi(X+Y)P=P(\Phi(X)+\Phi(Y))P.
		\]
		By the compression identity,
		\[
		P\Phi(X+Y)P=\Phi(PXP+PYP),
		\qquad
		P(\Phi(X)+\Phi(Y))P=\Phi(PXP)+\Phi(PYP).
		\]
		Thus it remains to prove additivity on the two-point corner $P\R P$.
		
		If exactly one of $i\preceq j$ and $j\preceq i$ holds, then, after
		interchanging $i$ and $j$ if necessary, we may assume that
		$i\preceq j$ and $j\not\preceq i$. The pair is nonisolated by the assumption, so
		Proposition~\ref{prop:nonisolated-two-point} applies. Writing
		\[
		PXP=x_{ii}E_{ii}+x_{jj}E_{jj}+x_{ij}E_{ij},
		\qquad
		PYP=y_{ii}E_{ii}+y_{jj}E_{jj}+y_{ij}E_{ij},
		\]
		we obtain
		\begin{align*}
			\Phi(PXP+PYP)
			&=
			F_{ii}(x_{ii}+y_{ii})E_{ii}
			+F_{jj}(x_{jj}+y_{jj})E_{jj}
			+F_{ij}(x_{ij}+y_{ij})U_{ij}  \\
			&=
			\Phi(PXP)+\Phi(PYP),
		\end{align*}
		from Proposition~\ref{prop:nonisolated-two-point} and the additivity of
		$F_{ii},F_{jj},F_{ij}$.
		
		On the other hand, if $i\asymp j$, then $P\R P\cong M_2(\DD)$. By the compression identity, $\Phi(P\R P)\subseteq PM_n(\DD)P$, and the
		restriction
		\[
		P\R P\to PM_n(\DD)P,\qquad Z\mapsto\Phi(Z),
		\]
		is an injective Jordan semi-triple map. Proposition~\ref{prop:m2-rigidity}
		gives additivity on this corner.
		
		It remains to consider the case where $i$ and $j$ are incomparable. Then $P\R P=\DD E_{ii}\oplus\DD E_{jj}.$ We prove that $F_{ii}$ and $F_{jj}$ are additive. If $\{i\}$ is a singleton connected component of $\Gamma_{\preceq}$, then $F_{ii}$ is
		an injective scalar Jordan semi-triple map. Indeed, for $a,b \in \DD$, using \eqref{eq:diag-coordinate} we obtain
		\[
		F_{ii}(aba)E_{ii}
		=
		\Phi(abaE_{ii})
		=
		\Phi(aE_{ii})\Phi(bE_{ii})\Phi(aE_{ii})
		=
		F_{ii}(a)F_{ii}(b)F_{ii}(a)E_{ii}.
		\]
		Hence $F_{ii}$ is additive by
		Proposition~\ref{prop:scalar-classification} and the assumption on $\DD$. Otherwise,
		$i$ is comparable with some $k\ne i$. If $i\asymp k$, then
		$P_{\{i,k\}}\R P_{\{i,k\}}\cong M_2(\DD)$, and
		Proposition~\ref{prop:m2-rigidity} gives the additivity of $F_{ii}$. If the
		comparability between $i$ and $k$ is nonsymmetric, then the corresponding
		nonsymmetric pair is nonisolated by the assumption, and the preceding local
		additivity argument gives the additivity of $F_{ii}$. The same proof gives
		the additivity of $F_{jj}$.
		
		Now write
		\[
		PXP=x_{ii}E_{ii}+x_{jj}E_{jj},
		\qquad
		PYP=y_{ii}E_{ii}+y_{jj}E_{jj}.
		\]
		Using \eqref{eq:two-diagonal-coordinate} and the additivity of
		$F_{ii},F_{jj}$ gives
		\[
		\Phi(PXP+PYP)= F_{ii}(x_{ii}+y_{ii})E_{ii} + F_{jj}(x_{jj}+y_{jj})E_{jj}=\Phi(PXP)+\Phi(PYP).
		\]
		Thus $\Phi$ is additive. Lemma~\ref{lem:normalization} implies that $\phi$ is additive as well.
	\end{proof}
	
	\begin{proof}[Proof of Theorem~\ref{thm:main}]
		Assume first that $(\mathsf P)$ and $(\mathsf S)$ hold. Then
		Proposition~\ref{prop:suff-additivity} implies that every injective Jordan
		semi-triple map $\phi:\R \to M_n(\DD)$ is additive.
		
		If $(\mathsf S)$ fails,
		then $\Gamma_{\preceq}$ has a singleton connected component, say $\{i\}$, and
		$\DD\not\cong\FF_3,\FF_4$. Since $|Z(\DD)|>2$, also
		$\DD\not\cong\FF_2$. Hence, by Proposition~\ref{prop:scalar-classification},
		there exists a nonadditive injective scalar Jordan semi-triple map
		$f:\DD\to\DD$. By Lemma~\ref{lem:central-components}, $\R$ decomposes as the direct sum of
		two-sided ideals
		\[
		\R
		=
		\DD E_{ii}
		\oplus
		\left(P_{[n]\setminus\{i\}}\R P_{[n]\setminus\{i\}}\right).
		\]
		Define a map $\Theta:\R\to\R$ by letting it act as $f$ on $\DD E_{ii}$ and as the
		identity on $P_{[n]\setminus\{i\}}\R P_{[n]\setminus\{i\}}$. Since the two
		summands annihilate each other, $\Theta$ is a nonadditive injective Jordan
		semi-triple map on $\R$. 
		
		On the other hand, if $(\mathsf P)$ fails, then there exists an isolated nonsymmetric comparable pair. Proposition~\ref{prop:bad-interval} gives a nonadditive
		injective Jordan semi-triple self-map on $\R$. Therefore, the global automatic additivity assumption implies both $(\mathsf P)$ and $(\mathsf S)$.
	\end{proof}
	
	\section{The role of the centre-size hypothesis}\label{sec:centre-size-hypothesis}
	
	This section clarifies the role of the centre-size hypothesis in
	Theorem~\ref{thm:main}. The sufficiency direction needs only the weaker
	assumption $|\DD|>2$: if $(\mathsf P)$ and $(\mathsf S)$ hold, then every
	injective Jordan semi-triple map $\R_{\preceq}\to M_n(\DD)$ is additive. The
	stronger assumption $|Z(\DD)|>2$ is needed in the necessity direction,
	specifically in the proof that automatic additivity forces $(\mathsf P)$:
	Proposition~\ref{prop:bad-interval} deforms an isolated nonsymmetric
	comparable pair using a central scalar different from $0$ and $1$.
	
	The examples below show that neither restriction can be removed. First, over
	$\FF_2$, the conditions $(\mathsf P)$ and $(\mathsf S)$ need not imply
	automatic additivity. Second, for division rings with $|\DD|>2$ but
	$|Z(\DD)|=2$, automatic additivity may hold even when $(\mathsf P)$ fails.
	
	\begin{example}\label{ex:tn-f2-final}
		Consider the upper-triangular ring $T_n(\FF_2)$, where $n \ge 2$. Define a map
		\[
		\Psi: T_n(\FF_2)\to T_n(\FF_2),
		\qquad
		\Psi(X):=X+x_{11}(1+x_{nn})E_{1n}.
		\]
		The map $\Psi$ is injective. Indeed, the diagonal entries are fixed, and once
		$x_{11}$ and $x_{nn}$ are known, the original $(1,n)$-entry is recovered from
		the $(1,n)$-entry of $\Psi(X)$. It is not additive, because
		\[
		\Psi(E_{11})=E_{11}+E_{1n},\qquad
		\Psi(E_{nn})=E_{nn},\qquad
		\Psi(E_{11}+E_{nn})=E_{11}+E_{nn}.
		\]
		We verify the Jordan semi-triple identity. Let $X=[x_{ij}],Y=[y_{ij}]\in T_n(\FF_2)$. Then
		\[
		\Psi(X)=X+x_{11}(1+x_{nn})E_{1n},
		\qquad
		\Psi(Y)=Y+y_{11}(1+y_{nn})E_{1n}.
		\]
		Since $E_{1n}AE_{1n}=0$ for every $A\in T_n(\FF_2)$, all terms containing two copies
		of $E_{1n}$ vanish. Moreover,
		\[
		E_{1n}YX=y_{nn}x_{nn}E_{1n},
		\qquad
		XYE_{1n}=x_{11}y_{11}E_{1n},
		\qquad
		XE_{1n}X=x_{11}x_{nn}E_{1n}.
		\]
		Hence
		\begin{align*}
			\Psi(X)\Psi(Y)\Psi(X)
			&=
			XYX+ \big(x_{11}(1+x_{nn})y_{nn}x_{nn}
			+x_{11}y_{11}x_{11}(1+x_{nn}) \\
			& \hspace{1.5cm} +x_{11}y_{11}(1+y_{nn})x_{nn}
			\big)E_{1n}  \\
			&=
			XYX+x_{11}y_{11}(1+x_{nn}y_{nn})E_{1n},
		\end{align*}
		where we use $r^2=r$ for $r\in\FF_2$. The first and last diagonal entries of
		$XYX$ are $x_{11}y_{11}$ and $x_{nn}y_{nn}$, respectively. Therefore
		\[
		\Psi(XYX)
		=
		XYX+x_{11}y_{11}(1+x_{nn}y_{nn})E_{1n}
		=
		\Psi(X)\Psi(Y)\Psi(X).
		\]
		Thus $\Psi$ is a nonadditive injective Jordan semi-triple map on $T_n(\FF_2)$. 
		
		Since the comparability graph of $T_n(\FF_2)$ is connected and nonsingleton,
		condition $(\mathsf S)$ is vacuous for $T_n(\FF_2)$. On the other hand, by
		Example~\ref{ex:T_n-SMR}, condition $(\mathsf P)$ holds for $T_n(\FF_2)$
		whenever $n\ge3$. Thus, for $n\ge3$, both $(\mathsf P)$ and $(\mathsf S)$
		hold for $T_n(\FF_2)$, while automatic additivity nevertheless fails. This
		shows that the centre-size hypothesis in Theorem~\ref{thm:main} cannot be
		omitted.
	\end{example}
	
	\begin{example}\label{ex:centre-F2-isolated-additive}
		Let $\DD$ be the universal field of fractions of the free associative
		$\FF_2$-algebra $\FF_2\langle x,y\rangle$. Thus $\DD$ is the free division
		ring over $\FF_2$ generated by two noncommuting elements $x$ and $y$. 
		
		For $S\subseteq\DD$, let
		\[
		C_\DD(S):=\{d\in\DD:ds=sd\text{ for all }s\in S\}
		\]
		be its centralizer in $\DD$. We use Cohn's centralizer theorem for free division rings \cite[Corollaire~2]{Cohn-free-centralizers}: the centralizer in $\DD$ of
		every element of $\DD\setminus\FF_2$ is commutative. In particular,
		$Z(\DD)=\FF_2$, since otherwise a non-scalar central element would have
		centralizer equal to the noncommutative division ring $\DD$. Consequently, if $S\subseteq\DD$ contains at least two noncommuting elements, then
		\begin{equation}\label{eq:centre-F2-subset-centralizer}
			C_\DD(S)=\FF_2 .
		\end{equation}
		Indeed, since $Z(\DD)=\FF_2$, we have $\FF_2\subseteq C_\DD(S)$. Conversely, if $\lambda\in C_\DD(S)\setminus\FF_2$, then $C_\DD(\{\lambda\})$ is commutative by Cohn's theorem. Since $\lambda$ commutes with every element of
		$S$, we have $S\subseteq C_\DD(\{\lambda\})$, a contradiction.
		
		We consider the $2 \times 2$ upper-triangular ring $T_2(\DD)$. As already noted (Example~\ref{ex:T_n-SMR}), for $T_2(\DD)$ the property $(\mathsf P)$ fails and $(\mathsf S)$ is vacuous. We claim
		that, nevertheless, every injective Jordan semi-triple map
		$T_2(\DD)\to M_2(\DD)$ is additive. By Lemma~\ref{lem:normalization} and
		Proposition~\ref{prop:diagonal-straightening}, it is enough to establish additivity for a 
		normalized map $\Phi :T_2(\DD)\to M_2(\DD)$ satisfying
		\[
		\Phi(0)=0,\qquad
		\Phi(I)=I,\qquad
		\Phi(E_{11})=E_{11},\qquad
		\Phi(E_{22})=E_{22}.
		\]
		For $X=aE_{11}+bE_{12}+cE_{22}$ with $a,b,c\in \DD$, the identities
		$E_{11}XE_{11}=aE_{11}$ and $E_{22}XE_{22}=cE_{22}$, together with the
		Jordan semi-triple identity, imply
		\begin{equation}\label{eq:diagonal-corners-D}
			E_{11}\Phi(X)E_{11}=\Phi(aE_{11}),\qquad
			E_{22}\Phi(X)E_{22}=\Phi(cE_{22}).
		\end{equation}
		Moreover,
		\[
		\Phi(aE_{11})=E_{11}\Phi(aE_{11})E_{11},\qquad
		\Phi(cE_{22})=E_{22}\Phi(cE_{22})E_{22}.
		\]
		Hence there are injective maps $\alpha,\delta:\DD\to\DD$, with
		$\alpha(0)=\delta(0)=0$ and $\alpha(1)=\delta(1)=1$, and maps $\beta,\gamma:\DD^3\to\DD$ such that
		\begin{equation}\label{eq:centre-F2-coordinate-form}
			\Phi(aE_{11}+bE_{12}+cE_{22})
			=
			\alpha(a)E_{11}+\beta(a,b,c)E_{12}+\gamma(a,b,c)E_{21}+\delta(c)E_{22},
		\end{equation}
		for all $a,b,c \in \DD$.
		
		We first show that globally $\beta \equiv 0$ or $\gamma \equiv 0$. Since $(bE_{12})I(bE_{12})=0$, while $\Phi(0)=0$ and $\Phi(I)=I$, we have $\Phi(bE_{12})^2=0$. By \eqref{eq:diagonal-corners-D}, the diagonal corners of $\Phi(bE_{12})$ vanish, so by
		\eqref{eq:centre-F2-coordinate-form},
		\[
		\Phi(bE_{12})=\beta(0,b,0)E_{12}+\gamma(0,b,0)E_{21}.
		\]
		If $b\ne0$, then $\Phi(bE_{12})\ne0$, and
		\[
		\Phi(bE_{12})^2
		=
		\beta(0,b,0)\gamma(0,b,0)E_{11}
		+
		\gamma(0,b,0)\beta(0,b,0)E_{22}.
		\]
		Therefore, exactly one of $\beta(0,b,0)$ and
		$\gamma(0,b,0)$ is nonzero. Choose $b_0\in\DD^\times$. If
		$\Phi(b_0 E_{12})=uE_{12}$, $u\ne0$, then for every
		$X=aE_{11}+bE_{12}+cE_{22}$,
		\[
		0=\Phi((b_0 E_{12})X(b_0 E_{12}))=(uE_{12})\Phi(X)(uE_{12})
		\stackrel{\eqref{eq:centre-F2-coordinate-form}}= (u\gamma(a,b,c)u)E_{12},
		\]
		so $\gamma(a,b,c)=0$ for all $a,b,c \in \DD$. Similarly, if
		$\Phi(b_0 E_{12})=uE_{21}$, then $\beta(a,b,c)=0$ for all $a,b,c \in \DD$.
		
		\textbf{Case 1.} Assume first that $\gamma \equiv 0$. Then
		\[
		\Phi(aE_{11}+bE_{12}+cE_{22})
		=
		\alpha(a)E_{11}+\beta(a,b,c)E_{12}+\delta(c)E_{22}, \qquad a,b,c \in \DD.
		\]
		Since
		\[
		XE_{11}X=a^2E_{11}+abE_{12},\qquad
		XE_{22}X=bcE_{12}+c^2E_{22},
		\]
		comparison of the $(1,2)$-coefficients in
		$\Phi(XE_{11}X)=\Phi(X)E_{11}\Phi(X)$ and
		$\Phi(XE_{22}X)=\Phi(X)E_{22}\Phi(X)$ gives
		\begin{equation}\label{eq:centre-F2-upper-relations}
			\beta(a^2,ab,0)=\alpha(a)\beta(a,b,c),\qquad
			\beta(0,bc,c^2)=\beta(a,b,c)\delta(c).
		\end{equation}
		If $a\ne0$, then $\alpha(a) \ne 0$ and then the first relation shows that $\beta(a,b,c) = \alpha(a)^{-1}\beta(a^2,ab,0)$, which is independent of $c$. In particular, $\beta(a,b,c)=\beta(a,b,1)$. Similarly, if $c\ne0$, the second relation gives $\beta(a,b,c)=\beta(0,bc,c^2)\delta(c)^{-1}$, which is independent of $a$.
		For fixed $b$, the values $\beta(a,b,1)$ with $a\ne0$ and the values
		$\beta(1,b,c)$ with $c\ne0$ are all equal, because they both agree with
		$\beta(1,b,1)$. Thus there is a map $t:\DD\to\DD$ such that
		\[
		\beta(a,b,c)=t(b)\qquad\text{whenever }(a,c)\ne(0,0).
		\]
		Define $s:\DD\to\DD$ by $s(b):=\beta(0,b,0)$. Then
		\[
		\beta(a,b,c)=
		\begin{cases}
			t(b),& \text{ if } (a,c)\ne(0,0),\\
			s(b),& \text{ if } a=c=0.
		\end{cases}
		\]
		Moreover, $\Phi(0)=0$ and $\Phi(I)=I$ give $s(0)=t(0)=0$, and injectivity
		gives $s(1),t(1)\ne0$. Returning to \eqref{eq:centre-F2-upper-relations} gives
		\begin{equation}\label{eq:centre-F2-upper-t-relations}
			t(ab)=\alpha(a)t(b),\qquad t(bc)=t(b)\delta(c),\qquad a,b,c\in\DD.
		\end{equation}
		After replacing $\Phi$ by $X\mapsto D_0\Phi(X)D_0^{-1}$, where $D_0:=\operatorname{diag}(t(1)^{-1},1)$, and keeping the same notation for the resulting map and its coordinate functions,
		we may assume $t(1)=1$. Then \eqref{eq:centre-F2-upper-t-relations} gives $\alpha=t=\delta$. Denote this
		common map by $\theta$. Hence $\theta(ab)=\theta(a)\theta(b)$ for all $a,b\in\DD$. At this stage,
		\[
		\Phi(aE_{11}+bE_{12}+cE_{22}) = \begin{cases}
			\theta(a)E_{11} + \theta(b)E_{12} + \theta(c)E_{22}, &\text{ if } (a,c) \ne (0,0),\\
			s(b)E_{12}, &\text{ if } a=c=0.
		\end{cases}
		\]
		To prove additivity of $\theta$, take
		$X=E_{11}+u E_{12}+E_{22}$ and $Y=E_{11}+v E_{12}$. Then
		$XYX=E_{11}+(u+v)E_{12}$, and comparison of the $(1,2)$-coefficients in
		$\Phi(XYX)=\Phi(X)\Phi(Y)\Phi(X)$ gives $\theta(u+v)=\theta(u)+\theta(v)$  for all $u,v\in\DD$. Thus $\theta$ is a ring monomorphism. Finally, applying $\Phi$ to
		\[
		(aE_{11}+cE_{22})(bE_{12})(aE_{11}+cE_{22})
		=
		abc\,E_{12}, \qquad a,b,c\in\DD,
		\]
		gives the identity
		\[
		s(abc)=\theta(a)s(b)\theta(c), \qquad a,b,c\in\DD.
		\]
		For $\lambda:=s(1)$, we have
		$s(a)=\theta(a)\lambda=\lambda\theta(a)$ for all $a\in\DD$, so
		$\lambda\in C_\DD(\theta(\DD))$. Since $\theta$ is injective and $\DD$ is
		noncommutative, $\theta(\DD)$ is noncommutative. Hence
		\eqref{eq:centre-F2-subset-centralizer}, applied to $S:=\theta(\DD)$, gives
		$\lambda\in\FF_2$. As $\lambda\ne0$, we get $\lambda=1$. Thus
		$s=t=\theta$, which shows that $\Phi$ is additive in the case $\gamma\equiv0$.
		
		\textbf{Case 2.} Assume now that $\beta\equiv0$. This case can be handled by
		arguments similar to those in Case~1. Alternatively, we reduce it to
		Case~1.  We first recall that $\DD$ admits an involution.  Set $\mathcal{A}:=\FF_2\langle x,y\rangle$, and regard $\DD$ as the
		universal field of fractions of $\mathcal{A}$. The word-reversal map is an involutive anti-automorphism $\rho$ of $\mathcal A$: on monomials it reverses the order of the letters, and it is
		extended $\FF_2$-linearly. By \cite[Corollary~2.5.2]{Cohn-free-ideal-rings}, $\mathcal A$ is a free ideal ring. Hence, by \cite[Theorem]{Klein-involutorial}, $\rho$ extends uniquely to
		an involutive anti-automorphism $\tau$ of the universal field of fractions $\DD$.
		
		By Remark~\ref{rem:transpose-convention}, the associated twisted transpose
		$\kappa:M_2(\DD)\to M_2(\DD)$, $\kappa(Z):=\tau[Z^t]$, is a ring anti-automorphism of $M_2(\DD)$ and fixes $I,E_{11},E_{22}$.  Define a map
		\[
		\Psi:T_2(\DD)\to M_2(\DD),
		\qquad
		\Psi(X):=\kappa(\Phi(X)).
		\]
		Since $\kappa$ is bijective and fixes $I,E_{11},E_{22}$, the map $\Psi$ is
		normalized and injective. Moreover, $\Psi$ is a Jordan semi-triple map,
		because $\kappa$ reverses products and the two outer factors in the Jordan
		semi-triple identity coincide. As $\beta\equiv0$, we have
		\[
		\Phi(aE_{11}+bE_{12}+cE_{22})
		=
		\alpha(a)E_{11}+\gamma(a,b,c)E_{21}+\delta(c)E_{22},
		\qquad a,b,c\in\DD.
		\]
		Therefore
		\[
		\Psi(aE_{11}+bE_{12}+cE_{22})
		=
		\tau(\alpha(a))E_{11}
		+
		\tau(\gamma(a,b,c))E_{12}
		+
		\tau(\delta(c))E_{22}.
		\]
		Thus $\Psi$ is of the form treated in Case~1, and hence $\Psi$ is additive.
		Since $\kappa$ is an additive bijection, it follows that
		$\Phi=\kappa^{-1}\circ\Psi$ is additive as well.
		
		It follows that the normalized map $\Phi$ is additive. Thus $T_2(\DD)$ has an
		isolated nonsymmetric comparable pair and no singleton connected component,
		but every injective Jordan semi-triple map $T_2(\DD)\to M_2(\DD)$ is
		additive. This shows that the converse direction of Theorem~\ref{thm:main}
		genuinely needs the centre-size assumption $|Z(\DD)|>2$, not merely
		$|\DD|>2$.
	\end{example}
	
	\section{Standard form of additive injective Jordan semi-triple maps}\label{sec:standard-form}
	
	In this final section we describe the standard form of additive
	injective Jordan semi-triple maps on general SMRs. This complements
	Theorem~\ref{thm:main}: the theorem gives a criterion for automatic
	additivity, whereas here additivity is assumed and the resulting
	structure is described along the connected components of the
	comparability graph.
	
	Throughout this section, let $\R=\R_{\preceq}\subseteq M_n(\DD)$ be an SMR,
	with $|\DD|>2$, and let $\phi:\R\to M_n(\DD)$ be an additive injective Jordan semi-triple map. Let $\Phi : \R \to M_n(\DD)$ be the corresponding normalized map from Lemma~\ref{lem:normalization}. After conjugating
	the range as in Proposition~\ref{prop:diagonal-straightening}, we may assume
	\[
	\Phi(I)=I,
	\qquad
	\Phi(P_S)=P_S\quad(S\subseteq[n]).
	\]
	Since $\Phi$ is additive and satisfies the Jordan semi-triple identity, it is
	a unital injective Jordan homomorphism in the square-preserving sense:
	\begin{equation}\label{eq:square-after-add}
		\Phi(X^2)=\Phi(X)^2,\qquad \text{for all }X\in\R .
	\end{equation}
	For an injective endomorphism or anti-endomorphism $\theta:\DD\to\DD$, set
	\[
	\Cent(\theta):=\{c\in\DD:c\theta(a)=\theta(a)c
	\text{ for all }a\in\DD\},
	\qquad
	\Cent(\theta)^\times:=\Cent(\theta)\setminus\{0\}.
	\]
	If $C$ is a connected component of $\Gamma_{\preceq}$, we write $\preceq_C$
	for the restriction of $\preceq$ to $C$, and $\preceq_C^t$ for the reverse
	preorder. Following Coelho~\cite{Coelho-LAA}, if $\rho$ is a preorder on a set $S$, a
	map $g:\rho\to\DD^\times$ is called \emph{transitive} if
	\[
	g(x,z)=g(x,y)g(y,z),
	\qquad \text{for all }x,y,z \in S \text{ with } (x,y), (y,z) \in \rho.
	\]
	In particular, $g(x,x)=1$ for all $x \in S$. In the SMR setting, transitive maps enter as coefficient multipliers, and the defining relation is exactly the compatibility condition forced by the
	matrix-unit products $E_{ij}E_{jk}=E_{ik}$.
	
	If $\sigma:\DD\to\DD$ is an injective endomorphism and
	$g:\mathop{\preceq_C} \to\Cent(\sigma)^\times$ is transitive, define
	\begin{equation}\label{eq:Theta-sigma}
		\Theta^M_{\sigma,g} : \R_{\preceq_C} \to \R_{\preceq_C}, \qquad \Theta^M_{\sigma,g}
		\left(\sum_{\substack{i\preceq j\\ i,j\in C}}x_{ij}E_{ij}\right)
		:=
		\sum_{\substack{i\preceq j\\ i,j\in C}}
		g(i,j)\sigma(x_{ij})E_{ij}.
	\end{equation}
	If $\tau:\DD\to\DD$ is an injective anti-endomorphism and
	$h:\mathop{\preceq_C}\to\Cent(\tau)^\times$ is transitive, define
	\begin{equation}\label{eq:Theta-tau}
		\Theta^A_{\tau,h} : \R_{\preceq_C} \to \R_{\preceq_C^t}, \qquad \Theta^A_{\tau,h}
		\left(\sum_{\substack{i\preceq j\\ i,j\in C}}x_{ij}E_{ij}\right)
		:=
		\sum_{\substack{i\preceq j\\ i,j\in C}}
		h(i,j)^{-1}\tau(x_{ij})E_{ji}.
	\end{equation}
	Since the values of $g$ and $h$ belong to $\Cent(\sigma)$ and
	$\Cent(\tau)$, respectively, transitivity gives, for all
	$X,Y\in\R_{\preceq_C}$,
	\[
	\Theta^M_{\sigma,g}(XY)
	=
	\Theta^M_{\sigma,g}(X)\Theta^M_{\sigma,g}(Y),
	\qquad
	\Theta^A_{\tau,h}(XY)
	=
	\Theta^A_{\tau,h}(Y)\Theta^A_{\tau,h}(X).
	\]
	Thus $\Theta^M_{\sigma,g}$ is an injective homomorphism
	$\R_{\preceq_C}\to\R_{\preceq_C}$, while $\Theta^A_{\tau,h}$ is an
	injective anti-homomorphism $\R_{\preceq_C}\to\R_{\preceq_C^t}$. On a
	singleton component, both constructions reduce to the scalar form
	$aE_{ii}\mapsto\theta(a)E_{ii}$, where $\theta:\DD\to\DD$ is an injective
	endomorphism or anti-endomorphism.
	
	\begin{lemma}\label{lem:additive-orientation}
		On every nonsingleton connected component $C$ of $\Gamma_{\preceq}$, all
		non-diagonal pairs $i \preceq_C j$ have the same $\Phi$-orientation, $\Phi$-preserving or
		$\Phi$-reversing.
	\end{lemma}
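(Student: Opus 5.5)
Since $C$ is connected in $\Gamma_{\preceq}$, the strategy is to propagate orientation along \emph{adjacent} off-diagonal pairs. Concretely, we show that two comparable off-diagonal pairs sharing an index have the same $\Phi$-orientation. Connectedness then links any two off-diagonal pairs in $C$ by a chain of pairs, consecutive ones sharing a vertex, which gives the claim. For pairs $(p,q)$ with $p\preceq q$, we use the orientation notation of Lemma~\ref{lem:coordinate-setup}. Here additivity matters, since $\Phi$ is now a unital Jordan homomorphism satisfying \eqref{eq:square-after-add}, and hence also $\Phi(XY+YX)=\Phi(X)\Phi(Y)+\Phi(Y)\Phi(X)$.

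\textbf{Step 1 (symmetric pairs).} If $i\asymp j$ with $i\ne j$, then $P_{\{i,j\}}\R P_{\{i,j\}}\cong M_2(\DD)$. The proof of Proposition~\ref{prop:m2-rigidity}, via the identity $E_{ij}E_{ji}E_{ij}=E_{ij}$, already shows that $(i,j)$ and $(j,i)$ have the same orientation.

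\textbf{Step 2 (two pairs sharing a vertex).} Let $(p,q)$ and $(r,s)$ be comparable off-diagonal pairs with $|\{p,q,r,s\}|=3$. If they form a chain, say $i\preceq j\preceq k$ with pairs $(i,j),(j,k)$, Lemma~\ref{lem:chain-orientation} gives equal orientations directly. The remaining configurations are the ``fork'' cases: $i\preceq j$ and $i\preceq k$ with $j,k$ incomparable, or $j\preceq i$ and $k\preceq i$ with $j,k$ incomparable. For the fork $i\preceq j$, $i\preceq k$, put $X:=E_{ij}$ and $Y:=E_{ki}$ when possible, but $k\preceq i$ need not hold. So instead use the Jordan product identity directly with $X:=E_{ij}+E_{ik}$ and the idempotent $E_{ii}$. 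I would compute $\Phi(X)$ by \eqref{eq:pqrs-formula} and use $E_{ii}X+XE_{ii}=X$. Suppose the orientations differ, e.g.\ $\Phi(X)=F_{ij}(1)E_{ij}+F_{ik}(1)E_{ki}$. Then
\[
E_{ii}\Phi(X)+\Phi(X)E_{ii}=F_{ij}(1)E_{ij}+F_{ik}(1)E_{ki},
\]
which is consistent, so this test fails. The right test is instead $X^2$: we have $X^2=0$, while the mixed image satisfies $\Phi(X)^2=F_{ik}(1)F_{ij}(1)E_{kj}\ne0$. This contradicts \eqref{eq:square-after-add} together with $\Phi(0)=0$. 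The dual fork $j\preceq i$, $k\preceq i$ is handled the same way with $X:=E_{ji}+E_{ki}$: a mixed image contains $E_{ji}$ and $E_{ik}$, whose product $E_{jk}\ne0$ appears in $\Phi(X)^2$, whereas $X^2=0$. Pairs sharing both endpoints are exactly those of Step~1.

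\textbf{Step 3 (propagation).} Build a graph whose vertices are the off-diagonal comparable pairs in $C$, joining two pairs when they share an index. If $i$ and $j$ are adjacent in $\Gamma_{\preceq}$ and $j$ and $k$ are adjacent, then the corresponding pairs (in whichever direction they are comparable) share $j$. Hence a path in $\Gamma_{\preceq}$ between any two indices of $C$ yields a chain of pairs, consecutive ones sharing an index. Combining this with Steps~1--2, orientation is constant along every such chain, and the lemma follows.

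The main obstacle is Step~2: identifying, in each configuration, an identity that forces matching orientations. The fork cases are not covered by Lemma~\ref{lem:chain-orientation}, and it is exactly there that additivity, through the square identity, is used to rule out mixed orientations.
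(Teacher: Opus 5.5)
Your proof is correct and is essentially the paper's argument. Both use the graph of off-diagonal pairs sharing an index, connected via paths in $\Gamma_{\preceq}$, with chains handled by Lemma~\ref{lem:chain-orientation}, symmetric pairs by $E_{ij}E_{ji}E_{ij}=E_{ij}$, and forks excluded because $(E_{ij}+E_{ik})^2=0$ while a mixed-orientation image has nonzero square.

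The one difference is that you compute $\Phi(E_{ij}+E_{ik})$ via \eqref{eq:pqrs-formula}, where the paper uses additivity. Since $\Phi(X^2)=\Phi(X)\Phi(I)\Phi(X)=\Phi(X)^2$ holds for any normalized $\Phi$, your argument actually never uses additivity, contrary to your closing remark. The abandoned Jordan-product test and the restriction to incomparable $j,k$ in the forks can simply be deleted.
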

	
	\begin{proof}
		Fix a nonsingleton connected component $C$. Let $\mathcal G_C$ be the graph
		whose vertices are the non-diagonal pairs $(i,j)$ with $i,j\in C$ and
		$i\preceq j$. Two vertices of $\mathcal G_C$ are joined by an edge if the
		corresponding ordered pairs have at least one index in common.
		
		We first show that $\mathcal G_C$ is connected. Let $(p,q)$ and $(r,s)$ be
		two vertices of $\mathcal G_C$. Since $C$ is connected in
		$\Gamma_{\preceq}$, there is a path
		\[
		q=i_0,i_1,\ldots,i_m=r
		\]
		in $C$ such that $i_{\ell-1},i_\ell$ are distinct and comparable for all $1 \le \ell \le m$.
		For each such $\ell$, choose one of the ordered pairs
		$(i_{\ell-1},i_\ell)$ or $(i_\ell,i_{\ell-1})$ which belongs to $\preceq$.
		Adding $(p,q)$ at the beginning and $(r,s)$ at the end gives a path in
		$\mathcal G_C$ from $(p,q)$ to $(r,s)$. Thus $\mathcal G_C$ is connected. It remains to show that $\mathcal G_C$-adjacent coordinates have the same orientation. We consider cases:
		\begin{itemize}
			\item If the pairs are of the form $(i,j)$ and $(j,k)$ with $i,j,k$ distinct, then
			Lemma~\ref{lem:chain-orientation} gives the assertion.
			\item If the pairs are of the form $(i,j)$ and $(j,i)$ with $i,j$ distinct, then
			$P_{\{i,j\}}\R P_{\{i,j\}}\cong M_2(\DD)$, and
			Proposition~\ref{prop:m2-rigidity} applies.
			\item Suppose that the pairs are of the form $(i,j)$ and $(i,k)$, where $i,j,k$ are distinct. Then $(E_{ij}+E_{ik})^2=0$. Write $\Phi(E_{pq})=\lambda_{pq}U_{pq}$, where
			$\lambda_{pq}:=F_{pq}(1)\ne0$ and, as in \eqref{eq:Upq-definition}, $U_{pq}$ is either $E_{pq}$ or $E_{qp}$,
			according to the $\Phi$-orientation. If $(i,j)$ and $(i,k)$ had opposite
			$\Phi$-orientations, then, by additivity,
			$(\Phi(E_{ij}+E_{ik}))^2$ would be one of $\lambda_{ik}\lambda_{ij}E_{kj}$ and $\lambda_{ij}\lambda_{ik}E_{jk}$, both of which are nonzero. This contradicts \eqref{eq:square-after-add}.
			\item If the pairs are of the form $(j,i)$ and $(k,i)$, where $i,j,k$ are distinct, then the proof is as in the previous case.
		\end{itemize}
		Hence adjacent vertices of $\mathcal G_C$ have the same orientation. As $\mathcal G_C$ is connected, this proves the lemma.
	\end{proof}
	
	\begin{proposition}\label{prop:normalized-standard}
		Let $C$ be a nonsingleton connected component of $\Gamma_{\preceq}$. Then
		there is a diagonal matrix $D_C\in P_CM_n(\DD)P_C$, invertible in the corner
		$P_CM_n(\DD)P_C$, such that the
		map
		\[
		P_C\R P_C \to P_CM_n(\DD)P_C, \qquad X\mapsto D_C\Phi(X)D_C^{-1},
		\]
		is either $\Theta^M_{\sigma_C,g_C}$ as in \eqref{eq:Theta-sigma}, with
		$\sigma_C:\DD\to\DD$ an injective endomorphism and
		$g_C:\mathop{\preceq_C}\to\Cent(\sigma_C)^\times$ transitive, or
		$\Theta^A_{\tau_C,h_C}$ as in \eqref{eq:Theta-tau}, with
		$\tau_C:\DD\to\DD$ an injective anti-endomorphism and
		$h_C:\mathop{\preceq_C}\to\Cent(\tau_C)^\times$ transitive.
	\end{proposition}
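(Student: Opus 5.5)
We work in the $\Phi$-preserving case first; the $\Phi$-reversing case is its mirror image. By Lemma~\ref{lem:additive-orientation}, all non-diagonal pairs in $C$ have a common $\Phi$-orientation. Since $\Phi$ is additive, \eqref{eq:diag-coordinate} and \eqref{eq:pure-unified} give
\[
\Phi\Bigl(\sum_{i\preceq j,\; i,j\in C}x_{ij}E_{ij}\Bigr)=\sum_{i\preceq j,\; i,j\in C}F_{ij}(x_{ij})E_{ij}
\]
in the preserving case. Here $F_{ij}$ is additive for every $i\preceq j$ in $C$, diagonal pairs included. Put $\lambda_{ij}:=F_{ij}(1)\in\DD^\times$ for $i\prec j$, and set $\lambda_{ii}:=1$. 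Taking $t=1$ in \eqref{eq:sandwich-preserving} gives $F_{ij}(a)=F_{ii}(a)\lambda_{ij}=\lambda_{ij}F_{jj}(a)$. Hence
\[
F_{jj}(a)=\lambda_{ij}^{-1}F_{ii}(a)\lambda_{ij}.
\]
Next apply \eqref{eq:square-after-add} to $X=aE_{ii}+bE_{ij}$, with $i\prec j$ in $C$; such a pair exists because $C$ is nonsingleton. Comparing $(i,j)$-entries gives $F_{ij}(ab)=F_{ii}(a)F_{ij}(b)$, that is, $F_{ii}(ab)\lambda_{ij}=F_{ii}(a)F_{ii}(b)\lambda_{ij}$. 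So $F_{ii}$ is an injective unital ring endomorphism, and this holds for every $i\in C$. The same sandwich identity with $t=1$ yields
\[
\lambda_{ij}F_{jj}(a)=F_{ii}(a)\lambda_{ij}.
\]

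For the relations among the $\lambda$'s, Lemma~\ref{lem:chain-orientation} gives $\lambda_{ik}=\lambda_{ij}\lambda_{jk}$ whenever $i,j,k$ are distinct and $i\preceq j\preceq k$. When $i\asymp j$, the identity $E_{ij}E_{ji}E_{ij}=E_{ij}$ gives $\lambda_{ij}\lambda_{ji}\lambda_{ij}=\lambda_{ij}$, hence $\lambda_{ij}\lambda_{ji}=1$. This is the analogue of $\beta=\alpha^{-1}$ in Proposition~\ref{prop:m2-rigidity}. Together these relations say that $(i,j)\mapsto\lambda_{ij}$ is a transitive map on $\preceq_C$; the degenerate cases with a repeated index are trivial.

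Now fix a root $r\in C$ and put $\sigma_C:=F_{rr}$. Choose a spanning tree of the connected graph $\Gamma_{\preceq}|_C$. Walking along the tree from $r$, define $d_r:=1$. For a tree edge joining $i$ and $j$ with $i\preceq j$, set $d_j:=d_i\lambda_{ij}$ if $i$ is closer to $r$, and $d_i:=d_j\lambda_{ij}^{-1}$ otherwise. By induction, $d_iF_{ii}(a)d_i^{-1}=\sigma_C(a)$ for all $i\in C$.

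Let $D_C:=\diag(d_i)_{i\in C}$, viewed in the corner $P_CM_n(\DD)P_C$. Then $D_C\Phi(X)D_C^{-1}$ has $(i,j)$-entry
\[
d_iF_{ii}(x_{ij})\lambda_{ij}d_j^{-1}=\sigma_C(x_{ij})\,g_C(i,j),
\qquad g_C(i,j):=d_i\lambda_{ij}d_j^{-1}.
\]
Transitivity of $g_C$ follows immediately from transitivity of $\lambda$, since the $d$'s telescope. Conjugating the relation $\lambda_{ij}F_{jj}(a)=F_{ii}(a)\lambda_{ij}$ by the $d$'s gives $g_C(i,j)\sigma_C(a)=\sigma_C(a)g_C(i,j)$. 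Thus $g_C$ takes values in $\Cent(\sigma_C)^\times$, and $g_C(i,j)\sigma_C(x)=\sigma_C(x)g_C(i,j)$, so the conjugated map is exactly $\Theta^M_{\sigma_C,g_C}$. The construction does not depend on path-independence of $d$; any choice of $d$'s with the diagonal property suffices.

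In the reversing case, $\Phi(aE_{ij})=F_{ij}(a)E_{ji}$, and the same steps apply with \eqref{eq:sandwich-reversing} and \eqref{eq:chain-coeff-reversing} in place of their preserving counterparts. The square identity for $X=aE_{ii}+bE_{ij}$ now reads $F_{ij}(ab)=F_{ij}(b)F_{ii}(a)$, which makes each $F_{ii}$ an anti-endomorphism. The relations become $\lambda_{ij}=\lambda_{ij}'F_{ii}(1)\dots$; concretely, $F_{ij}(a)=\lambda_{ij}F_{ii}(a)=F_{jj}(a)\lambda_{ij}$, and the chain rule reads $\lambda_{ik}=\lambda_{jk}\lambda_{ij}$. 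The multipliers therefore compose in the opposite order. For this reason one should set $h_C(i,j)^{-1}:=d_j\lambda_{ij}d_i^{-1}$, with the $d$'s chosen so that $d_iF_{ii}d_i^{-1}=\tau_C$, and check that $h_C$ is transitive with respect to $\preceq_C$, not its reverse.

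I expect the main obstacle to be this coefficient-order bookkeeping in the reversing case, and more generally over noncommutative $\DD$. The essential point is that the centralizer condition arises precisely from the two one-sided sandwich relations. Once that is secured, transitivity is routine telescoping.
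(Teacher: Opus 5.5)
Your proposal is correct and follows the paper's proof essentially step for step. The shared steps are: (anti-)multiplicativity of each $F_{ii}$ from the one-sided sandwich relations; the intertwining $F_{ii}(a)\lambda_{ij}=\lambda_{ij}F_{jj}(a)$ (or its reversed form); a diagonal similarity $D_C$ built along paths from a root (your spanning tree is one admissible choice of the paper's paths); and the multipliers $g_C(i,j)=d_i\lambda_{ij}d_j^{-1}$ and $h_C(i,j)=d_i\lambda_{ij}^{-1}d_j^{-1}$, whose transitivity comes from Lemma~\ref{lem:chain-orientation} together with $\lambda_{ij}\lambda_{ji}=1$ (you get this from $E_{ij}E_{ji}E_{ij}=E_{ij}$ rather than by squaring $E_{ij}+E_{ji}$, which works equally well). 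The only blemish is the garbled fragment ``$\lambda_{ij}=\lambda_{ij}'F_{ii}(1)\dots$'' in the reversing paragraph, which should be deleted; the relations $F_{ij}(a)=\lambda_{ij}F_{ii}(a)=F_{jj}(a)\lambda_{ij}$ and $\lambda_{ik}=\lambda_{jk}\lambda_{ij}$ stated right after it are the correct ones.
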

	
	\begin{proof}
		By additivity, the compression identity \eqref{eq:compression}, and
		Lemma~\ref{lem:coordinate-setup}, the restriction of $\Phi$ to
		$P_C\R P_C$ is determined entrywise by the coordinate maps:
		\[
		\Phi\left(\sum_{\substack{i\preceq j\\ i,j\in C}}
		x_{ij}E_{ij}\right)
		=
		\sum_{\substack{i\preceq j\\ i,j\in C}}
		F_{ij}(x_{ij})U_{ij}.
		\]
		Here $U_{ii}:=E_{ii}$, and for $i\ne j$ the matrix unit $U_{ij}$ is defined
		by \eqref{eq:Upq-definition}. Put
		\[
		\lambda_{ij}:=F_{ij}(1),\qquad i,j\in C,\ i\preceq j.
		\]
		Then $\lambda_{ij}\in\DD^\times$ and $\lambda_{ii}=1$. By
		Lemma~\ref{lem:additive-orientation}, all non-diagonal coordinates in $C$
		have the same $\Phi$-orientation.
		
		Assume first that this orientation is preserving. We show that each
		$F_{ii}$, $i\in C$, is an injective endomorphism of $\DD$. Additivity,
		injectivity and unitality are already known. Since $C$ is nonsingleton, choose
		$j\in C$, $j\ne i$, comparable with $i$. If $i\preceq j$, then
		\[
		F_{ii}(ab)\lambda_{ij}
		\stackrel{\eqref{eq:sandwich-preserving}}=
		F_{ij}(ab)
		\stackrel{\eqref{eq:sandwich-preserving}}=
		F_{ii}(a)F_{ij}(b)
		\stackrel{\eqref{eq:sandwich-preserving}}=
		F_{ii}(a)F_{ii}(b)\lambda_{ij},
		\]
		and right cancellation gives $F_{ii}(ab)=F_{ii}(a)F_{ii}(b)$. If
		$j\preceq i$, the same argument applied to the coordinate $(j,i)$ gives
		\[
		\lambda_{ji}F_{ii}(ab)
		\stackrel{\eqref{eq:sandwich-preserving}}=
		F_{ji}(ab)
		\stackrel{\eqref{eq:sandwich-preserving}}=
		F_{ji}(a)F_{ii}(b)
		\stackrel{\eqref{eq:sandwich-preserving}}=
		\lambda_{ji}F_{ii}(a)F_{ii}(b),
		\]
		and left cancellation gives the same conclusion.
		
		Fix $r\in C$ and put $\sigma_C:=F_{rr}$. For each $i\in C$, choose any path $r=i_0,i_1,\ldots,i_m=i$ in $\Gamma_{\preceq}$, and choose
		one comparable direction along each edge. Define $\delta_0:=1$ and
		\[
		\delta_{\ell+1}
		:=
		\begin{cases}
			\delta_\ell\lambda_{i_\ell i_{\ell+1}},
			&\text{if } i_\ell\preceq i_{\ell+1},\\[1mm]
			\delta_\ell\lambda_{i_{\ell+1}i_\ell}^{-1},
			&\text{if } i_{\ell+1}\preceq i_\ell .
		\end{cases}
		\]
		Set $d_i:=\delta_m$. These choices are used only to construct one diagonal
		similarity; no path-independence is asserted or needed.
		
		For $p,q\in C$ with $p\preceq q$, \eqref{eq:sandwich-preserving} gives
		\begin{equation}\label{eq:diag-intertwining-preserving}
			F_{pp}(a)\lambda_{pq}=\lambda_{pq}F_{qq}(a),
			\qquad a\in\DD .
		\end{equation}
		Induction along the chosen paths gives
		\[
		F_{i_{\ell}i_{\ell}}(a)=\delta_\ell^{-1}\sigma_C(a)\delta_\ell,
		\qquad a\in\DD,\ 0 \le \ell \le m.
		\]
		Indeed, suppose the induction statement holds at $i_\ell=p$, and write
		$i_{\ell+1}=q$. If $p\preceq q$, then
		\[
		F_{qq}(a)
		\stackrel{\eqref{eq:diag-intertwining-preserving}}=
		\lambda_{pq}^{-1}F_{pp}(a)\lambda_{pq}
		=
		(\delta_\ell\lambda_{pq})^{-1}\sigma_C(a)(\delta_\ell\lambda_{pq});
		\]
		if $q\preceq p$, then
		\[
		F_{qq}(a)
		\stackrel{\eqref{eq:diag-intertwining-preserving}}=
		\lambda_{qp}F_{pp}(a)\lambda_{qp}^{-1}
		=
		(\delta_\ell\lambda_{qp}^{-1})^{-1}\sigma_C(a)
		(\delta_\ell\lambda_{qp}^{-1}).
		\]
		These are exactly the two recursive definitions of $\delta_{\ell+1}$. For
		$\ell=m$ we therefore obtain
		\begin{equation}\label{eq:induction-preserving}
			F_{ii}(a)=d_i^{-1}\sigma_C(a)d_i,
			\qquad a\in\DD,\ i\in C .
		\end{equation}
		Set
		\[
		D_C:=\sum_{i\in C}d_iE_{ii}\in P_CM_n(\DD)P_C.
		\]
		Then $D_C$ is invertible in $P_CM_n(\DD)P_C$. Define
		\[
		g_C:\mathop{\preceq_C}\to\DD^\times,
		\qquad
		g_C(i,j):=d_i\lambda_{ij}d_j^{-1}.
		\]
		We prove that $g_C$ is a transitive $\Cent(\sigma_C)^\times$-valued multiplier. If
		$i\preceq j\preceq k$ and $i,j,k$ are distinct, then
		Lemma~\ref{lem:chain-orientation} gives, in the preserving case, $\lambda_{ik}=\lambda_{ij}\lambda_{jk}$. Hence 
		\[
		g_C(i,k)=g_C(i,j)g_C(j,k).
		\]
		The cases $i=j$ and $j=k$ are immediate from $\lambda_{ii}=1$. If $i=k$ and $i\asymp j$, with $i\ne j$, then \eqref{eq:square-after-add} applied to $E_{ij}+E_{ji}$ gives $\lambda_{ij}\lambda_{ji}=1=\lambda_{ji}\lambda_{ij}$, and therefore 
		\[
		g_C(i,j)g_C(j,i)=1=g_C(i,i).
		\]
		Thus $g_C$ is transitive.
		
		For $i\preceq j$ and $a\in\DD$, we have
		\[
		\sigma_C(a)g_C(i,j)
		\stackrel{\eqref{eq:induction-preserving}}=
		d_iF_{ii}(a)\lambda_{ij}d_j^{-1}
		\stackrel{\eqref{eq:diag-intertwining-preserving}}=
		d_i\lambda_{ij}F_{jj}(a)d_j^{-1}
		\stackrel{\eqref{eq:induction-preserving}}=
		g_C(i,j)\sigma_C(a).
		\]
		Thus $g_C(i,j)\in\Cent(\sigma_C)^\times$.
		
		Finally, if $i\preceq j$, $i\ne j$, then
		\begin{align*}
			D_C\Phi(aE_{ij})D_C^{-1}
			&\stackrel{\eqref{eq:pure-preserving}}=
			d_iF_{ij}(a)d_j^{-1}E_{ij}
			\stackrel{\eqref{eq:sandwich-preserving}}=
			d_iF_{ii}(a)\lambda_{ij}d_j^{-1}E_{ij}
			\stackrel{\eqref{eq:induction-preserving}}=
			\sigma_C(a)g_C(i,j)E_{ij}\\
			& = g_C(i,j)\sigma_C(a)E_{ij}.
		\end{align*}
		For $i=j$, \eqref{eq:induction-preserving} gives
		\[
		D_C\Phi(aE_{ii})D_C^{-1}
		=
		\sigma_C(a)E_{ii}
		=
		g_C(i,i)\sigma_C(a)E_{ii}.
		\]
		By additivity,
		\[
		D_C\Phi(X)D_C^{-1}=\Theta^M_{\sigma_C,g_C}(X),
		\qquad X\in P_C\R P_C.
		\]
		
		It remains to treat the reversing case. The proof is the same, with the order
		of coefficients reversed. First, every $F_{ii}$ is an injective
		anti-endomorphism. Indeed, if $i\preceq j$, then
		\[
		\lambda_{ij}F_{ii}(ab)
		\stackrel{\eqref{eq:sandwich-reversing}}=
		F_{ij}(ab)
		\stackrel{\eqref{eq:sandwich-reversing}}=
		F_{ij}(b)F_{ii}(a)
		\stackrel{\eqref{eq:sandwich-reversing}}=
		\lambda_{ij}F_{ii}(b)F_{ii}(a),
		\]
		and left cancellation gives $F_{ii}(ab)=F_{ii}(b)F_{ii}(a)$. If
		$j\preceq i$, the coordinate $(j,i)$ gives
		\[
		F_{ii}(ab)\lambda_{ji}
		\stackrel{\eqref{eq:sandwich-reversing}}=
		F_{ji}(ab)
		\stackrel{\eqref{eq:sandwich-reversing}}=
		F_{ii}(b)F_{ji}(a)
		\stackrel{\eqref{eq:sandwich-reversing}}=
		F_{ii}(b)F_{ii}(a)\lambda_{ji},
		\]
		and right cancellation gives the same conclusion.
		
		Fix $r\in C$ and put $\tau_C:=F_{rr}$. For $i \in C$, choose a path $r=i_0, i_1, \ldots, i_m = i$ in $\Gamma_{\preceq}$ as above,
		define $\delta_0:=1$, and continue recursively by
		\[
		\delta_{\ell+1}
		:=
		\begin{cases}
			\delta_\ell\lambda_{i_\ell i_{\ell+1}}^{-1},
			&\text{if } i_\ell\preceq i_{\ell+1},\\[1mm]
			\delta_\ell\lambda_{i_{\ell+1}i_\ell},
			&\text{if } i_{\ell+1}\preceq i_\ell .
		\end{cases}
		\]
		Set again $d_i:=\delta_m$. For $p,q\in C$ with $p\preceq q$,
		\eqref{eq:sandwich-reversing} gives
		\begin{equation}\label{eq:diag-intertwining-reversing}
			F_{qq}(a)\lambda_{pq}=\lambda_{pq}F_{pp}(a),
			\qquad a\in\DD .
		\end{equation}
		The same path induction gives
		\begin{equation}\label{eq:induction-reversing}
			F_{ii}(a)=d_i^{-1}\tau_C(a)d_i,
			\qquad a\in\DD,\ i\in C .
		\end{equation}
		For a forward step $p\preceq q$ this follows from $F_{qq}(a)=\lambda_{pq}F_{pp}(a)\lambda_{pq}^{-1}$, and for a reverse step
		$q\preceq p$ from $F_{qq}(a)=\lambda_{qp}^{-1}F_{pp}(a)\lambda_{qp}$; these are exactly the two recursive definitions above.
		
		Set again $D_C:=\sum_{i\in C}d_iE_{ii}$, and define
		\[
		h_C:\mathop{\preceq_C}\to\DD^\times, \qquad h_C(i,j):=d_i\lambda_{ij}^{-1}d_j^{-1}.
		\]
		For $i\preceq j$ and $a\in\DD$, we have
		\[
		\tau_C(a)h_C(i,j)
		\stackrel{\eqref{eq:induction-reversing}}=
		d_iF_{ii}(a)\lambda_{ij}^{-1}d_j^{-1}
		\stackrel{\eqref{eq:diag-intertwining-reversing}}=
		d_i\lambda_{ij}^{-1}F_{jj}(a)d_j^{-1}
		\stackrel{\eqref{eq:induction-reversing}}=
		h_C(i,j)\tau_C(a).
		\]
		Thus $h_C(i,j)\in\Cent(\tau_C)^\times$.
		
		The same verification as above, now using the reversing identity
		$\lambda_{ik}=\lambda_{jk}\lambda_{ij}$ from Lemma~\ref{lem:chain-orientation},
		shows that $h_C$ is transitive on $\preceq_C$. Finally, for $i\prec j$,
		\[
		D_C\Phi(aE_{ij})D_C^{-1}
		\stackrel{\eqref{eq:pure-reversing}}=
		d_jF_{ij}(a)d_i^{-1}E_{ji}
		\stackrel{\eqref{eq:sandwich-reversing}}=
		d_j\lambda_{ij}F_{ii}(a)d_i^{-1}E_{ji}
		\stackrel{\eqref{eq:induction-reversing}}=
		h_C(i,j)^{-1}\tau_C(a)E_{ji}.
		\]
		For $i=j$, \eqref{eq:induction-reversing} gives
		\[
		D_C\Phi(aE_{ii})D_C^{-1}=\tau_C(a)E_{ii}=
		h_C(i,i)^{-1}\tau_C(a)E_{ii}.
		\]
		By additivity,
		\[
		D_C\Phi(X)D_C^{-1}=\Theta^A_{\tau_C,h_C}(X),
		\qquad X\in P_C\R P_C.
		\]
		This proves the proposition.
	\end{proof}
	
	\begin{theorem}\label{thm:standard-form-additive}
		Let $\R_{\preceq}\subseteq M_n(\DD)$ be an SMR with $|\DD|>2$, and let
		$\phi:\R_{\preceq}\to M_n(\DD)$ be an injective additive Jordan semi-triple
		map. Then there exist $S\in\GL_n(\DD)$, signs
		$\eps_C\in\{1,-1\}\subseteq Z(\DD)$ and maps $\Theta_C : P_C \R P_C \to P_C M_n(\DD) P_C$, indexed by the connected components
		$C$ of $\Gamma_{\preceq}$, such that
		\begin{equation}\label{eq:main-normal-form}
			\phi(X)
			=
			S^{-1}\left(\bigoplus_C\eps_C\Theta_C(P_CXP_C)\right)S,
			\qquad X\in\R_{\preceq}.
		\end{equation}
		Moreover, for each connected component $C$, the map $\Theta_C$ is either
		$\Theta^M_{\sigma_C,g_C}$ as in \eqref{eq:Theta-sigma}, with
		$\sigma_C$ an injective endomorphism of $\DD$ and
		$g_C:\mathop{\preceq_C}\to\Cent(\sigma_C)^\times$ transitive, or
		$\Theta^A_{\tau_C,h_C}$ as in \eqref{eq:Theta-tau}, with
		$\tau_C$ an injective anti-endomorphism of $\DD$ and
		$h_C:\mathop{\preceq_C}\to\Cent(\tau_C)^\times$ transitive.
		
		Conversely, any map of the above form is an injective Jordan semi-triple map.
	\end{theorem}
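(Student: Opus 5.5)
The plan is to undo, step by step, the normalizations already carried out in this section. Start from the injective additive map $\phi$ and pass to $\Phi(X)=J\phi(X)$ with $J=\phi(I)$, as in Lemma~\ref{lem:normalization}. That lemma gives $J^2=I$ and shows that $J$ commutes with $\Phi(\R)$. Then choose $T\in\GL_n(\DD)$ from Proposition~\ref{prop:diagonal-straightening} so that $\Psi:=T\Phi(\cdot)T^{-1}$ fixes every $P_S$. By the compression identity \eqref{eq:compression} and Lemma~\ref{lem:central-components}, we have $\Psi(X)=\sum_C\Psi(P_CXP_C)$. Here each $\Psi(P_CXP_C)$ lies in the corner $P_CM_n(\DD)P_C$, and this decomposition uses additivity.

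The work on each component splits into two cases.

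\emph{Nonsingleton components.} Apply Proposition~\ref{prop:normalized-standard} to obtain $D_C$ with $D_C\Psi(\cdot)D_C^{-1}=\Theta_C$ on $P_C\R P_C$.

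\emph{Singleton components $C=\{i\}$.} By \eqref{eq:diag-coordinate}, $\Psi(aE_{ii})=F_{ii}(a)E_{ii}$. Moreover, $F_{ii}$ is an additive, injective scalar Jordan semi-triple map with $F_{ii}(1)=1$. Remark~\ref{rem:additive-scalar-JST} then shows that $F_{ii}$ is an injective endomorphism or anti-endomorphism, so it is of the scalar form of $\Theta_C$ with trivial multiplier. Take $D_C=E_{ii}$.

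Set $D:=\sum_C D_C\in\GL_n(\DD)$. Then $D\Psi(X)D^{-1}=\bigoplus_C\Theta_C(P_CXP_C)$.

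Next recover the signs. Put $\widehat J:=DTJT^{-1}D^{-1}$. It satisfies $\widehat J^2=I$ and commutes with the image $\bigoplus_C\Theta_C(P_C\R P_C)$. This image contains every $E_{ii}$, because $\Theta_C(E_{ii})=E_{ii}$. Hence $\widehat J$ is diagonal, say $\widehat J=\diag(j_1,\ldots,j_n)$.

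Fix $i\in C$. The image contains $\sigma_C(a)E_{ii}$ (or $\tau_C(a)E_{ii}$) for every $a$, so $j_i$ commutes with the image of $\sigma_C$ (or $\tau_C$).

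If $C$ is nonsingleton, take $i\prec j$ in $C$. The image contains $g_C(i,j)E_{ij}$ (or $h_C(i,j)^{-1}E_{ji}$), and commuting with it gives $j_ig_C(i,j)=g_C(i,j)j_j$. This relation propagates along paths in $C$. Since $j_i^2=1$ in a division ring, $j_i=\pm1$. Together with the propagated relation, this forces $j_i=j_j$ on each component. Call the common value $\eps_C$. Because $g_C$ is nonzero and $\pm1$ is central, this is consistent.

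We then have $\phi(X)=J^{-1}\Phi(X)=J\Phi(X)$. Substituting gives
\[
\phi(X)=(DT)^{-1}\widehat J\Big(\bigoplus_C\Theta_C(P_CXP_C)\Big)DT
=S^{-1}\Big(\bigoplus_C\eps_C\Theta_C(P_CXP_C)\Big)S,
\qquad S:=DT .
\]

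For the converse, each $\Theta_C$ is an injective homomorphism or anti-homomorphism, as noted after \eqref{eq:Theta-tau}. Either kind satisfies $\Theta(XYX)=\Theta(X)\Theta(Y)\Theta(X)$. Multiplying by $\eps_C$ preserves this identity because $\eps_C^3=\eps_C$ is central. The different corners annihilate one another, $P_CXP_C$ respects the decomposition of Lemma~\ref{lem:central-components}, and conjugation by $S$ is harmless. Injectivity holds componentwise, because each $\Theta_C$ is injective coordinatewise.

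The main obstacle is the sign step: showing that $\widehat J$ is a scalar $\pm1$ on each component, rather than an arbitrary involution commuting with the image. The plan is to show first that $\widehat J$ is diagonal, using the images of the $E_{ii}$, and then that $j_i=\pm1$. The latter follows because $j_i^2=1$ in a division ring gives $(j_i-1)(j_i+1)=0$.
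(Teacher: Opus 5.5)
Your proposal is correct and follows essentially the same route as the paper. You normalize via Lemma~\ref{lem:normalization} and Proposition~\ref{prop:diagonal-straightening}, apply Proposition~\ref{prop:normalized-standard} on nonsingleton components and Remark~\ref{rem:additive-scalar-JST} on singleton components, and then show that the conjugated copy of $\phi(I)$ is diagonal with entries $\pm1$ that agree along comparable pairs and are therefore constant on each component. Your explicit use of additivity to write $\Psi(X)=\sum_C\Psi(P_CXP_C)$, and your bookkeeping $S=DT$, only spell out steps that the paper leaves implicit.
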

	
	\begin{proof}
		Normalize $\phi$ as in Lemma~\ref{lem:normalization}, and denote the
		normalized map by $\Phi$. We use the standing convention after
		Proposition~\ref{prop:diagonal-straightening}.
		
		On nonsingleton connected components,
		Proposition~\ref{prop:normalized-standard} gives the asserted standard form.
		On a singleton component $C=\{i\}$, the restriction has the form
		$\Phi(aE_{ii})=\theta_C(a)E_{ii}$, for some nonzero additive scalar Jordan semi-triple map $\theta_C : \DD\to \DD$. Since $\Phi(E_{ii}) = E_{ii}$,  Remark~\ref{rem:additive-scalar-JST}
		shows that $\theta_C$ is an injective endomorphism or anti-endomorphism of $\DD$. Combining the componentwise diagonal similarities, we obtain
		$T\in\GL_n(\DD)$ such that
		\begin{equation}\label{eq:displayed-image}
			T\Phi(X)T^{-1}
			=
			\bigoplus_C\Theta_C(P_CXP_C),
			\qquad X\in\R_{\preceq}.
		\end{equation}
		
		It remains to undo the normalization. Let $J$ denote the corresponding conjugate of $\phi(I)$ after the fixed
		range similarities already absorbed into the current normalized map $\Phi$.
		Then $J^2=I$ and $J$ centralizes $\Phi(\R_{\preceq})$. Therefore,
		$\widehat J:=TJT^{-1}$ centralizes the image of the map from \eqref{eq:displayed-image}. This image contains all diagonal
		idempotents, so $\widehat J$ is diagonal. Since $\widehat J^2=I$, its diagonal
		entries are $\pm1$. For each $i\prec j$, the map $X\mapsto T\Phi(X)T^{-1}$ sends $E_{ij}$ to a nonzero scalar multiple of either $E_{ij}$ or $E_{ji}$. Thus commutation with
		$\widehat J$ forces the corresponding diagonal signs to be equal. Hence the
		signs are constant on connected components, and
		\[
		\widehat J=\bigoplus_C\eps_CP_C,
		\qquad \eps_C\in\{1,-1\}.
		\]
		Undoing the initial normalization and the fixed range similarities, we get
		\[
		S\phi(X)S^{-1}
		=
		\widehat J\left(\bigoplus_C\Theta_C(P_CXP_C)\right)
		=
		\bigoplus_C\eps_C\Theta_C(P_CXP_C)
		\]
		for some fixed $S\in\GL_n(\DD)$. This is precisely
		\eqref{eq:main-normal-form}.
		
		The converse verification is straightforward. As the maps
		$\Theta^M_{\sigma_C,g_C}$ and $\Theta^A_{\tau_C,h_C}$ are respectively
		monomorphisms and anti-monomorphisms, each component map is a
		Jordan semi-triple map. The block direct sum, multiplication by the central
		signs $\eps_C=\pm1$, and conjugation by $S$ preserve the Jordan semi-triple
		identity. Injectivity follows from the injectivity of the component maps.
	\end{proof}
	
	\section*{Acknowledgments}
	This research was supported by the European Union -- NextGenerationEU through the National Recovery and Resilience Plan 2021--2026 Institutional grants of University of Zagreb Faculty of Science (IK IA 1.1.3. Impact4Math, PMF-CROFUND).
	
	The authors disclose that OpenAI's GPT-5.5 was used for copyediting and language improvement of the manuscript.

\end{document}